\title{Boundary vortices in the presence of a magnetic field }
\author[1]{Khalida Awashra}
\author[1]{Matthias Kurzke} 
    \affil[1]{School of Mathematical Sciences, University of Nottingham, University Park, Nottingham NG7 2RD, UK, \href{pmxka6@exmail.nottingham.ac.uk}{khalida.awashra@nottingham.ac.uk} \ \&\ \href{pmzmk@exmail.nottingham.ac.uk}{matthias.kurzke@nottingham.ac.uk}}
\begin{document}
\maketitle
\begin{center}
\textbf{July 20, 2025} 
\end{center}
\thispagestyle{fancy}

\section*{Abstract}
We study the behaviour of the magnetization vector field in a thin ferromagnetic film in the presence of an external in-plane constant magnetic field. We work on a specific thin-film regime where boundary vortices dominate the energy and show the Gamma-convergence at the second order of the micromagnetic energy to an energy term called the renormalized energy, representing the interaction energy between boundary vortices. We present a new approach to defining the renormalized energy and rigorously show the relation between this definition and the classical one. We prove the concentration of the energy around boundary vortices, where the location of these vortices depends on the external field applied. We provide some numerical simulations of the magnetization vector field and the renormalized energy versus the location of the vortices in two different domains: the unit disk and an oval-shaped domain. 

\tableofcontents
\section{Introduction}
The theory of micromagnetics focuses on studying the behaviour of a three-dimensional unit vector field, called the \textit{magnetization}, in ferromagnetic samples. Mainly, it studies the stable states of this unit vector field, which correspond to the local minimizers of the micromagnetic energy of the sample. This variational problem is non-local, non-convex and multiscale, which makes the problem rich and interesting to study. 

\subsection{Physical background and the three-dimensional model}
We start by introducing our cylindrical shape ferromagnetic sample
\begin{equation*}
    \bfom= \omega \times (0,t) \subset \R^3,
\end{equation*}
where  $ \omega \subset \R^2$ is the cross-section of $\bfom$ with diameter $\ell>0$, and $t>0$ is the thickness of the sample $\bfom$. We assume that  $ \omega \subset \R^2$ is an open, bounded, simply-connected set of class $C^{1,1}$.
Note that we use the bold font in this paper to denote three-dimensional quantities.

The magnetization unit-length vector field describes the behaviour of the magnetic moments in our ferromagnetic sample, and it is given by
\begin{equation*} 
    \bfm=(m,m_3): \bfom \rightarrow \Ss^2,
\end{equation*}
where $\Ss^2$ is the unit sphere in $\R^3$, $m\in\R^2$ and $m_3\in\R$. The constraint $|\bfm|=1$ makes the problem non-convex. The three-dimensional micromagnetic energy of the domain $\bfom$ is
\begin{equation}
    E^{3D}(\bfm) = d^2 \int_{\bfom} |\bfa \bfm|^2 \,d\bfx + \int_{\R^3}|\bfa U|^2 \,d\bfx  + Q\int_{\bfom} \phi(\bfm) \, d\bfx- \int_{\bfom }\bfm\cdot\boldsymbol{H}_{ext} \,d\bfx ,
\end{equation}
where $\bfx= (x,x_3)= (x_1,x_2,x_3) \in \R^3$, $d\bfx$ is the three-dimensional Lebesgue measure, and $\bfa = (\nabla,\partial_{x_3})= (\partial_{x_1}, \partial_{x_2},\partial_{x_3})$.  
 The first term of the energy $E^{3D}$ is the \textit{exchange energy}, which is generated by interactions of neighbouring electrons spins. This energy term is lower when electron spins are parallel, i.e. it prefers constant (or slowly changing) magnetization. The positive constant $d$ is called the \textit{exchange length} and it is a material constant of the order of nanometers.

 The second term is the \textit{stray-field} (or the \textit{magnetostatic}) energy. It is a non-local term and represents the energy of the magnetic field induced by the magnetization $\bfm$ in the whole space $\R^3$. The \textit{stray field potential} $U: \R^3 \rightarrow \R$ is the solution of the static Maxwell equation
\begin{equation}\label{staticMax}
\bfd U = \bfa\cdot\left(\bfm \mathds{1}_{\bfom}\right)
\end{equation}
in the sense of distributions in $\R^3$, i.e. for every $\zeta \in C^\infty_c(\R^3)$ 
\begin{equation}\label{distMax}
    \int_{\R^3} \bfa U\cdot \bfa\zeta d \bfx = \int_{\bfom} \bfm \cdot \bfa \zeta d\bfx, 
\end{equation}
where $\mathds{1}_{\bfom}$ above is the characteristic function of $\bfom$. This energy term prefers divergence-free magnetizations in the sense of distributions, so it also prefers $\bfm \cdot \boldsymbol{\nu} =0$ on $\partial\bfom$, where $\boldsymbol{\nu}$ is the outer unit normal vector at $\partial\bfom$.

The third term of the micromagnetic energy is the \textit{anisotropy energy}, which describes the tendency of the magnetization vector field $\bfm$ to align with specific directions depending on the crystalline structure of the ferromagnetic material. The function $\phi:\Ss^2 \rightarrow \R$ is called the \textit{anisotropy energy density}, whose minima are the preferred directions of the magnetization (easy axes). The constant $Q$ is the \textit{quality factor}, which is a non-dimensional material constant that measures the strength of the anisotropy energy relative to the other terms of the energy $E^{3D}$. 

The last term of $E^{3D}$ appears in the presence of an \textit{external magnetic field} $\boldsymbol{H}_{ext}:\R^3\rightarrow\R^3$, and it is called the \textit{Zeeman}  (or the \textit{external field}) energy. This energy term favours the magnetization to align with the direction of the external field. For more details about these energy terms, see for example \cite{nanomag}, \cite{hubert}, \cite{aharoni}, and \cite{recent}.

The variational problem we study depends on many parameters that appear in the expression of the micromagnetic energy. Two of these parameters depend on the material of the ferromagnetic sample (the quality factor $Q$ and the exchange length $d$) and the other two depend on the geometry of the sample (the diameter of the cross-section $\ell$ and the thickness $t$). From these parameters, we introduce two positive non-dimensional parameters:
\begin{equation}
    \alpha:= \frac{t}{\ell}, \ \ \text{and} \ \   \eta := \frac{d}{\ell} ,
\end{equation}
where $\alpha$ is the aspect ratio of $\bfom$ and $\eta$ is the normalized exchange length. We are interested in studying a thin-film limit of this problem, i.e. when the thickness $t$ of the film is very small compared with the diameter $\ell$ ($\alpha \ll 1$). We also assume that our ferromagnetic material is soft with $Q=0$ (no anisotropy term will appear in our energy), and the external field $\boldsymbol{H}_{ext}$ is a constant in-plane vector ($\boldsymbol{H}_{ext}=(H_{ext},0)=(H_1,H_2,0)$, where $H_1,H_2\in \R$), and $|\boldsymbol{H}_{ext}|= O(d^2)$. 
\subsection{A thin-film regime}
\subsubsection{Energy rescaling} We are interested in studying the micromagnetic energy and the magnetization behaviour in a specific thin-film regime. We start with a rescaling of the micromagnetic energy by nondimensionalizing our parameters in length, i.e. let 
\begin{equation}
    \hat{\bfx} = \frac{\bfx}{\ell} \in \bfOm_\alpha\,, \ \ \text{for} \ \ \bfx\in\bfom, \ \ \text{where} \ \ 
 \ \bfOm_\alpha = \Omega\times (0,\alpha) \subset \R^3 \,, \ \ \Omega=\frac{\omega}{\ell}.
\end{equation}
The cross-section $\Omega$ now has diameter one (by definition). Let us also rescale the magnetization and the stray field potential by considering $\bfma(\hat{\bfx})  = \bfm(\bfx)$ and $U_\alpha(\hat{\bfx}) = \frac{1}{\ell}U(\bfx)$.

Now, we rescale the three-dimensional micromagnetic energy and get the \textit{rescaled energy} $\hat{E}_\alpha(\bfma;\hext)= \frac{1}{d^2 t |\log \varepsilon|} E^{3D}(\bfm)$, where 
\begin{equation}
    \varepsilon =\frac{\eta^2}{\alpha|\log\alpha|}.
\end{equation}
After dropping the hat $\ \hat{} \ $ from our notation (for simplicity), this energy can be written as
\begin{equation}\label{res.energy}
    E_\alpha(\bfma;\hext) = \frac{1}{\alpha |\log \varepsilon|}\int_{\bfOm_\alpha}|\nabla \bfma|^2 \,d\bfx +\frac{1}{\eta^2 \alpha  |\log \varepsilon|} \int_{\R^3}|\nabla U_\alpha|^2 \,d\bfx  - \frac{1}{d^2 \alpha |\log \varepsilon|} \int_{\boldsymbol{\Omega}_\alpha }\boldsymbol{H}_{ext}\cdot\bfma \,d\bfx.
\end{equation}
 The material constant $\frac{1}{d^2}$ appearing in the last term above tells us how different materials react to the same external field, but we will rescale the external field so that the new external field is $\hext = \frac{\boldsymbol{H}_{ext}}{d^2}$. Therefore, the rescaled energy \eqref{res.energy} is now given by 
\begin{equation} \label{rescaled}
E_\alpha(\bfma;\hext) = \frac{1}{\alpha |\log \varepsilon|}\int_{\bfOm_\alpha}|\nabla \bfma|^2 \,d\bfx +\frac{1}{\eta^2 \alpha  |\log \varepsilon|} \int_{\R^3}|\nabla U_\alpha|^2 \,d\bfx  - \frac{1}{ \alpha |\log \varepsilon|} \int_{\bfOm_\alpha } \hext\cdot\bfma \,d\bfx , 
\end{equation}
where 
\begin{equation}
    \bfma: \bfOm_\alpha \rightarrow \mathbb{S}^2, \ \ \text{and} \ \ \bfd U_\alpha = \bfa \cdot (\bfma \mathds{1}_{\bfOm_\alpha}) \ \ \text{in the sense of distributions in} \ \ \R^3 .
\end{equation}

\subsubsection{The regime}
We mentioned that the problem we study is multiscale, which makes the problem rich and allows to study it in different regimes. Here we focus on the following thin-film regime:
\begin{equation}\label{regime}
    \eta \ , \ \alpha \ll 1 \  \ \ \  \text{and} \ \ \ \frac{1}{|\log \alpha|} \ll \varepsilon \ll 1,
\end{equation}
which is equivalent to $\alpha \ll \eta^2 \ll \alpha|\log\alpha| \ll 1 $. We also consider the following narrower regime, where some of our results hold only in this regime:
\begin{equation}\label{narrower.regime}
   \frac{\log|\log\alpha|}{|\log\alpha|} \ll \varepsilon.
\end{equation}
This thin-film regime \eqref{regime} and the narrower one \eqref{narrower.regime} were studied by Ignat and Kurzke \cite{ignatk}. Using the notion of global Jacobian and Gamma-convergence, they showed that the micromagnetic energy concentrates around boundary vortices. However, they studied the problem with no external field (and no anisotropy energy). What we do here is trying to generalize their results when an in-plane constant external field $\hext$ is applied to the sample and prove the concentration of the energy around boundary vortices. We will see examples of how an applied field affects the positions of the boundary vortices and the magnetization inside the sample via numerical simulations.
\subsection{Energy reduction}
\subsubsection{From the 3D energy to a 2D energy}
Based on the work of Kohn and Slastikov in \cite{kohn}, we define the average magnetization $\bm = (\bmp,\bmt) : \Omega \rightarrow\bar B^3$ (where $\bar B^3$ is the closed unit ball in $\R^3$) as follows:
\begin{equation}\label{average.magnetization}
   \bm(x) = \frac{1}{\alpha} \int_{0}^{\alpha} \bfm_\alpha(x,x_3)\, dx_3 \ , \ \ x \in \Omega \subset  \R^2.
\end{equation}
The stray-field potential $\bu : \R^3 \rightarrow \R$ 
 is generated by $\bm$ via the equation 
\begin{equation}
    \boldsymbol{\Delta} \bu = \bfa \cdot (\bm \mathds{1}_{\bfOm_{\alpha}}) \ \ \ \text{in} \ \ \R^3 .
\end{equation}
Note that $|\bm| \leq 1$, and defining the average magnetization $\bm$ as the average over the third coordinate here makes sense as $\alpha$ tends to zero in our regime. 

We want to reduce the full 3D nonlocal micromagnetic energy to a 2D local energy functional for the average magnetization $\bm$. Therefore, following the work done by Ignat and Kurzke \cite{ignatk}, we introduce the following two-dimensional \textit{reduced energy} functional corresponding to $\bm$: 
\begin{align} 
\eeeb(\bm;\hext) = \frac{1}{|\log \varepsilon|}\Big(&\int_{\Omega}|\nabla \bm|^2 \,dx +\frac{1}{\eta^2 } \int_{\Omega}( 1-|\bmp|^2 )\,dx  \nonumber\\
&+ \frac{1}{2 \pi \varepsilon} \int_{\partial\Omega}(\bmp \cdot \nu)^2 d\mathcal{H}^1- \int_{\Omega} \hext\cdot\bm \,dx \Big),\label{reduced}
\end{align}
where $\mathcal{H}^1$ is the one dimensional Hausdorff measure, $\nu$ is the outer unit normal vector at $\partial\Omega$ and $\Omega$ is the 2D simply connected $C^{1,1}$ domain introduced above. This energy functional can be extended to all functions in the Sobolev space $H^1(\Omega, \R^3)$ by taking $\eeeb(\bfm) = \infty$ when $|\bfm| >1$ on a set of positive measure.

 Again, the main idea of our work is to study and analyze the minimizers of the micromagnetic energy, which are the stable state magnetization vector fields. The full $3D$ micromagnetic energy $E_\alpha$ \eqref{rescaled} is nonlocal (due to the magnetostatic term), but the reduced 2D energy $\bar E_\alpha$ is local as we see from equation \eqref{reduced}, so it is easier to study this energy. Our main aim at this stage is to rewrite Theorem 9 in \cite{ignatk} with an external magnetic field. This theorem studies the rescaled energy \eqref{rescaled} 
 and it has some compactness results and two orders of energy expansion by $\Gamma$-convergence. To prove this theorem we need to rewrite and prove some other theorems from \cite{ignatk} and \cite{ignatk2} in the presence of $\hext$. 
 
Let us start by rewriting Theorem 1 in \cite{ignatk} in the presence of an external field, which connects the $3D$ micromagnetic energy $E_\alpha$ \eqref{rescaled} with the $2D$ reduced energy $\eeeb$ \eqref{reduced} for the average magnetizations $\bm$ that are defined on the $2D$ transversal section of our sample. In other words, the following theorem reduces the energy $E_\alpha$ to the energy $\eeeb$.

\begin{theorem}\label{rescaled+reduced}
 Let $\bfOm_\alpha=\Omega\times (0,\alpha)$ with $\Omega\subset \R^2$ be a simply connected 
$C^{1,1}$ domain. In the regime \eqref{regime}, we consider a family of magnetizations $\{\bfm_\alpha:\bfOm_\alpha\to \Ss^2\}_{\alpha\to 0}$ 
with associated  stray field potentials $\{U_\alpha:\R^3\to \R\}_{\alpha\to 0}$  and we assume
\begin{equation*}
    \limsup_{\alpha\to 0} E_\alpha(\bfma;\hext)<\infty.
\end{equation*}
Then
\begin{equation}\label{lowerbound11}  E_\alpha(\bfma;\hext)\ge \eeeb(\bm;\hext)-o(1) \quad \textrm{as} \quad \alpha\to 0.
    \end{equation}
Moreover, in the narrower regime \eqref{narrower.regime} , we have the following improved estimate:
\begin{equation}\label{narrower.inequality}
E_\alpha(\bfm_\alpha;\hext)\ge \eeeb(\bm;\hext)-o(\frac{1}{|\log\varepsilon|})\quad \textrm{as} \quad \alpha\to 0.\end{equation}
If $\bfm_\alpha$ are independent of $x_3$ (i.e. $\bfm_\alpha=\bm$), then 
in the regime \eqref{regime} there holds 
$E_\alpha(\bfm_\alpha;\hext)= \eeeb(\bm;\hext)-o(1)$, 
while in the regime \eqref{narrower.regime} we have $E_\alpha(\bfm_\alpha;\hext)= \eeeb(\bm;\hext)-o(\frac{1}{|\log\varepsilon|})$
as $\alpha\to 0$.
\end{theorem}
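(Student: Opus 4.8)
The plan is to compare $E_\alpha(\bfm_\alpha;\hext)$ with $\eeeb(\bm;\hext)$ term by term, exploiting the fact that the only difference from the field-free statement (Theorem 1 in \cite{ignatk}) is the Zeeman term, which is \emph{linear} in $\bfm_\alpha$. First I would split $E_\alpha$ into its exchange plus stray-field part and its Zeeman part, and likewise split $\eeeb$. For the exchange and stray-field contributions, I would simply invoke the corresponding estimate from \cite{ignatk}, which already yields
\begin{equation*}
\frac{1}{\alpha|\log\varepsilon|}\int_{\bfOm_\alpha}|\nabla\bfm_\alpha|^2\,d\bfx+\frac{1}{\eta^2\alpha|\log\varepsilon|}\int_{\R^3}|\nabla U_\alpha|^2\,d\bfx\ge\frac{1}{|\log\varepsilon|}\Big(\int_\Omega|\nabla\bm|^2\,dx+\frac{1}{\eta^2}\int_\Omega(1-|\bar m'|^2)\,dx+\frac{1}{2\pi\varepsilon}\int_{\partial\Omega}(\bar m'\cdot\nu)^2\,d\mathcal H^1\Big)-o(1),
\end{equation*}
with the $o(1)$ improved to $o(1/|\log\varepsilon|)$ in the narrower regime \eqref{narrower.regime}, and with equality when $\bfm_\alpha$ is $x_3$-independent. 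The compactness hypothesis $\limsup E_\alpha<\infty$ together with the (bounded) Zeeman term guarantees the energy bound needed to apply those lemmas.

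The new ingredient is to handle the Zeeman term, and here the key observation is that it reduces \emph{exactly}, with no error. Indeed, by Fubini and the definition \eqref{average.magnetization} of the average magnetization,
\begin{equation*}
\frac{1}{\alpha|\log\varepsilon|}\int_{\bfOm_\alpha}\hext\cdot\bfm_\alpha\,d\bfx=\frac{1}{|\log\varepsilon|}\int_\Omega\hext\cdot\Big(\frac{1}{\alpha}\int_0^\alpha\bfm_\alpha(x,x_3)\,dx_3\Big)dx=\frac{1}{|\log\varepsilon|}\int_\Omega\hext\cdot\bm\,dx,
\end{equation*}
since $\hext=(H_{ext},0)$ is constant. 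Subtracting this identity from the exchange/stray-field inequality gives exactly \eqref{lowerbound11}, and in the narrower regime \eqref{narrower.regime} exactly \eqref{narrower.inequality}. When $\bfm_\alpha=\bm$ is independent of $x_3$, the exchange/stray-field part is an equality up to $o(1)$ (resp.\ $o(1/|\log\varepsilon|)$) by \cite{ignatk}, and the Zeeman identity above is exact, so $E_\alpha(\bfm_\alpha;\hext)=\eeeb(\bm;\hext)-o(1)$ (resp.\ $-o(1/|\log\varepsilon|)$), as claimed.

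I do not expect a genuine obstacle here: the whole point is that the Zeeman term is linear and bounded, so it passes to the average with no loss and does not interfere with the delicate part of the argument (the reduction of the nonlocal stray-field energy to the local boundary penalization, including the sharp constant $\tfrac{1}{2\pi\varepsilon}$ and the control of the $x_3$-dependence of $\bfm_\alpha$), which is already done in \cite{ignatk}. The only points requiring a line of care are: (i) checking that $\hext\cdot\bm$ is integrable on $\Omega$ and that the bound $\limsup E_\alpha<\infty$ really does furnish the uniform exchange/stray-field bound after absorbing the bounded Zeeman contribution (using $|\bm|\le1$ and $|\Omega|<\infty$); and (ii) making sure the $o(1)$ and $o(1/|\log\varepsilon|)$ error terms quoted from \cite{ignatk} are uniform over the relevant family, which they are since they depend only on $\alpha,\eta,\varepsilon$ and $\Omega$. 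Thus the proof is essentially a citation of Theorem 1 of \cite{ignatk} plus the elementary Fubini identity for the Zeeman term.
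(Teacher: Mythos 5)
Your proposal is correct and follows essentially the same route as the paper: both reduce to Theorem 1 of \cite{ignatk} after observing that the Zeeman term passes exactly to the averaged magnetization by Fubini (since $\hext$ is constant) and that its boundedness ($|\bm|\le 1$, $|\Omega|<\infty$) lets the energy-bound hypothesis transfer to the field-free energy. No gaps.
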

\subsubsection{Another energy reduction}
Let us introduce the following energy functional for two-dimensional maps $u\in H^1(\Omega;\R^2)$:
\begin{equation}\label{epsilon.eta.energy.definition}
   \eee(u;\hex) = \int_{\Omega}|\nabla u|^2 \,dx +\frac{1}{\eta^2 } \int_{\Omega}( 1-|u|^2 )^2\,dx  + \frac{1}{2 \pi \varepsilon} \int_{\partial\Omega}(u \cdot \nu)^2 d\mathcal{H}^1- \int_{\Omega} \hex\cdot u \,dx . 
\end{equation}
This energy is similar to the energy functional studied in \cite{ignatk2}, but here we have an additional term due to the presence of an external magnetic field. If we take $u$ to be the average magnetization vector, i.e. $u=\bmp$ (where $\bm=(\bmp,\bmpth)$), then comparing the second term of $\eee(\bmp;\hex)$ with the second term of $\eeeb(\bm;\hext)$, we get 
\begin{equation}\label{lowerbound2}
    \eeeb(\bm;\hext) \geq \frac{1}{|\log\varepsilon|} \eee(\bmp;\hex),
\end{equation}
(since $\left(1-|u|^2\right) \geq \left(1-|u|^2\right) ^2$, for $|u|\leq1$).  From equation \eqref{lowerbound11} and the above inequality \eqref{lowerbound2}, we see that finding lower bounds for the energy functional $\eee$ will provide lower bounds for $\eeeb$, which in turn provide lower bounds for $E_\alpha$. Following the work of Ignat and Kurzke  \cite{ignatk2}, we can simplify the problem more by introducing another energy functional using a lifting argument, and then we will state and prove some results regarding the compactness and $\Gamma$-convergence for this new energy. Let us introduce this new functional by rewriting lemma 4.1 in \cite{ignatk2} with $\hex$. 
\begin{lemma}\label{lifting}
    Let $\Omega \subset \R^{2}$ be a bounded, simply connected and $C^{1,1}$ domain. If $u \in$ $H^{1}\left(\Omega ; \Ss^1\right)$ then there exists a lifting $\phi \in H^{1}(\Omega ; \R)$ with $u=e^{i \phi}$ and $\phi$ is unique up to an additive constant in $2 \pi \mathbb{Z}$. Furthermore, for every small $\varepsilon>0$ and $\eta>0$,
\begin{equation}\label{Gepsi.definition}
  \Ge(\phi;\hex):=  E_{\varepsilon, \eta}(u;\hex) = \int_{\Omega}|\nabla \phi|^{2} d x+\frac{1}{2 \pi \varepsilon} \int_{\partial \Omega} \sin ^{2}(\phi-g) d \mathcal{H}^{1} - \int_{\Omega} \hex\cdot e^{i\phi} \,dx  , 
\end{equation}
where $g$ is a lifting of the unit tangent vector field $\tau$ at $\partial \Omega$, i.e.
\begin{equation}\label{liftingtangent}
    e^{i g}=\tau=i \nu \quad \text { on } \quad \partial \Omega.
\end{equation}
\end{lemma}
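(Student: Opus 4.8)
The plan is to split the statement into the lifting part and the energy rewriting, noting that the former is classical and identical to what is done in the absence of a field, while the latter is a termwise computation. For the lifting, I would simply recall the argument of \cite[Lemma 4.1]{ignatk2}, which does not involve the Zeeman term at all. Given $u=(u_1,u_2)\in H^{1}(\Omega;\Ss^1)$, set $w:=u_1\nabla u_2-u_2\nabla u_1\in L^{2}(\Omega;\R^2)$. From $|u|^2\equiv 1$, i.e. $u_1\nabla u_1+u_2\nabla u_2=0$ a.e., one checks that $w$ is curl-free in the sense of distributions (for merely $H^1$ maps this follows by approximation, since smooth $\Ss^1$-valued maps are dense in $H^1(\Omega;\Ss^1)$ when $\Omega$ is simply connected). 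Because $\Omega$ is simply connected, $w=\nabla\phi$ for some $\phi\in H^1(\Omega;\R)$, and then $\nabla\big(u e^{-i\phi}\big)=e^{-i\phi}\big(\nabla u-iu\nabla\phi\big)=0$ a.e. (again using $|u|^2\equiv1$); hence $u e^{-i\phi}$ equals a unimodular constant $e^{i\theta}$ on the connected set $\Omega$, and after replacing $\phi$ by $\phi+\theta$ we get $u=e^{i\phi}$. Uniqueness follows since $e^{i\phi_1}=e^{i\phi_2}$ with $\phi_j\in H^1$ forces $\phi_1-\phi_2\in 2\pi\mathbb Z$ a.e., and an $H^1$ function valued in the discrete set $2\pi\mathbb Z$ has vanishing gradient a.e., hence is constant on the connected domain $\Omega$.

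It then remains to rewrite $\eee(u;\hex)$ in terms of $\phi$. Since $|u|\equiv1$, the bulk potential term $\frac1{\eta^2}\int_\Omega(1-|u|^2)^2\,dx$ in \eqref{epsilon.eta.energy.definition} vanishes identically. Writing $u=e^{i\phi}$ gives $\nabla u=i e^{i\phi}\nabla\phi$, hence $|\nabla u|^2=|\nabla\phi|^2$ a.e. in $\Omega$. For the boundary integral I would use \eqref{liftingtangent}: from $\tau=e^{ig}=i\nu$ we get $\nu=-i\tau=e^{i(g-\pi/2)}$ on $\partial\Omega$, so that $u\cdot\nu=\cos(\phi-g+\pi/2)=-\sin(\phi-g)$ and therefore $(u\cdot\nu)^2=\sin^2(\phi-g)$ $\mathcal H^1$-a.e. on $\partial\Omega$. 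The Zeeman term is unchanged, $\int_\Omega\hex\cdot u\,dx=\int_\Omega\hex\cdot e^{i\phi}\,dx$, and it is finite since $|e^{i\phi}|=1$ and $\Omega$ is bounded. Adding the three contributions yields $\eee(u;\hex)=\Ge(\phi;\hex)$ with $\Ge$ as in \eqref{Gepsi.definition}.

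I do not expect any genuine obstacle here: the new external-field term contains no derivatives of $u$, so it affects neither the construction nor the uniqueness of the lifting and needs no regularization; it simply transcribes into $-\int_\Omega\hex\cdot e^{i\phi}\,dx$. The only mildly technical ingredient, already present in \cite{ignatk2}, is the distributional curl-free property of $w$ for $H^1$ (rather than $C^1$) maps, which is the standard point behind lifting results for $\Ss^1$-valued Sobolev maps on simply connected domains and is handled by the density argument mentioned above.
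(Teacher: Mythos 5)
Your proof is correct and follows essentially the same route as the paper, which simply refers to Lemma 4.1 in \cite{ignatk2} and notes that the argument is unchanged: the classical lifting construction via the curl-free field $u\times\nabla u$ on the simply connected domain, the identities $|\nabla u|^2=|\nabla\phi|^2$ and $(u\cdot\nu)^2=\sin^2(\phi-g)$, and the observation that the Zeeman term contains no derivatives and merely transcribes into $-\int_\Omega\hex\cdot e^{i\phi}\,dx$.
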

The proof of this lemma is similar to the proof of lemma 4.1 in \cite{ignatk2}.

\subsection{The renormalized energy}
The \textit{renormalized energy} represents the interaction energy between boundary vortices in the sample. As we will see later, this energy appears asymptotically in the second-order expansion of the micromagnetic energy in the sense of Gamma-convergence.  
%
%
%

In a similar context to that introduced by Brezis-Bethuel-Helein \cite{Brezis}, we firstly state the definition of the renormalized energy when no external field is applied ($\hex=0$) (see Definition 5 in \cite{ignatk}). 
\begin{definition}\label{unperturbed.renormalized.def}
 Let $\Omega \subset \mathbb{R}^2$ be a bounded, simply connected, $C^{1,1}$ domain and $\varkappa$ the curvature of $\partial\Omega$. Let $\phib:\partial\Omega \rightarrow \mathbb{R}$ be a BV function such that $e^{i\phib}\cdot\nu = 0$ on $\partial\Omega \backslash \{ a_1, ..., a_N\}$ for $N\ge 2$ distinct points $a_j \in \partial\Omega$ and 
\begin{equation*}
    \partial_{\tau}\phib = \varkappa - \pi \sum_{j=1}^{N} d_j \delta_{a_j}\ \  \text{on} \ \partial\Omega\ \ \text{with} \  d_j\in\{ \pm1\} \ \ \text{and} \ \sum_{j=1}^{N} d_j = 2 . 
\end{equation*}
If $\phi_*$ is the harmonic extension to $\Omega$ of $\phib$, i.e. if it satisfies the following equation 
 \begin{equation}
    \begin{cases} \Delta\phi_*  =0 \ \ &\text{in}  \ \Omega , \\
    \phi_*=\phib \ \  &\text{on} \ \partial\Omega .
    \end{cases}
     \end{equation} 
 Then the \textit{renormalized energy} of $\{(a_j, d_j)\}$ in the absence of an external field is
 \begin{equation}
      W_{\Omega;0}(\{ (a_j,d_j)\}) = \lim_{\rho \to 0}\left(  \int_{\Omega\backslash \cup_{j=1}^{N} B_\rho(a_j)}|\nabla \phi_*|^2 \,dx   - \pi N \log \frac{1}{\rho}\right),
 \end{equation}  
 where $ B_\rho(a_j)$ is the disk centered at $a_j$ of radius $\rho$. 
\end{definition}

As we see from the definition above, the renormalized energy eliminates the 'infinite' energy concentrated in the small disks $ B_\rho(a_j)$ around the boundary vortices $a_j \in \partial\Omega$, for $j=1, 2, ...,N$.

When an external field $\hex$ is applied to the domain, we define the renormalized energy as follows.

\begin{definition}\label{renormalized.definition}
  Let $\Omega \subset \mathbb{R}^2$ be a bounded, simply connected and $C^{1,1}$ domain. Let  $\sigmas$ be any minimizer of 
\begin{equation*}
    F(\sigma) = \int_\Omega \left(|\nabla \sigma|^2   - h_{ext}\cdot e^{i(\sigma+\phi_*)}\right) \, dx,
\end{equation*}
where $\phi_*$ is the function mentioned in Definition \ref{unperturbed.renormalized.def} and $\sigmas=0$ on $\partial\Omega$. Then the renormalized energy in the presence of an external field is 
\begin{equation}\label{renormalized.energy}
    \Wad = W_{\Omega;0}(\{ (a_j,d_j)\}) + F(\sigmas),
\end{equation}
where $ W_{\Omega;0}(\{ (a_j,d_j)\})$ is the unperturbed renormalized energy defined above.
\end{definition}
\begin{remark}
    The functional $F(\sigma)$ has at least one minimizer in $H^1(\Omega;\R)$. This follows from the Direct Method of the calculus of variations (see for example Theorem 2.3 in \cite{rindler}). Moreover, for sufficiently small magnetic field $\hex$, the functional $F$ is convex on $H^1_0(\Omega,\R)$. Hence, there will be a unique minimizer of $F$ in $H^1_0(\Omega,\R)$, which is the solution of the Euler-Lagrange equation associated with $F$.  
\end{remark}
This new definition of the renormalized energy is a better approach to defining the interaction energy between boundary vortices as it does not require the uniqueness of the solution of the Euler-Lagrange equation associated with the renormalized energy. It is kind of similar to the definition in \cite{appliedfield}, but here we show rigorously, in the next lemma, that this new renormalized energy is lower than any energy of the form of $G$ defined below in \eqref{Genergy}. The energy $G$ is inspired by the previous definition of the unperturbed renormalized energy (Definition \ref{unperturbed.renormalized.def}), but here we have an additional term due to the existence of an external magnetic field. 
\begin{lemma}\label{renormalized_lemma}
    Let $\psi\in H^{1}_{loc}(\bar{\Omega}\setminus \{a_1,...,a_N\})$ with $\psi=\phib$ on the boundary of $\Omega$ and let 
    \begin{equation}\label{Genergy}
        G(\psi) := \liminf_{\rho \to 0}\left(  \int_{\Omega\backslash \cup_{j=1}^{N} B_\rho(a_j)}|\nabla \psi|^2 \,dx   - \pi N \log \frac{1}{\rho}\right) - \int_{\Omega} h_{ext}\cdot e^{i\psi} \, dx.
    \end{equation}
    Then, $G(\psi)\geq \Wad$, with equality when $\psi=\phi_* + \sigmas$, where $\phi_*$ and $\sigmas$ are the functions mentioned above.
\end{lemma}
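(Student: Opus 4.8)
The plan is to prove the sharper statement that
\begin{equation*}
  G(\psi)=W_{\Omega;0}(\{(a_j,d_j)\})+F(\psi-\phi_*)
\end{equation*}
whenever $\psi-\phi_*\in H^1(\Omega)$, and $G(\psi)=+\infty$ otherwise. Granting this, the lemma is immediate: since $\psi=\phib=\phi_*$ on $\partial\Omega$, the function $\sigma:=\psi-\phi_*$ has zero trace, hence lies in the class $H^1_0(\Omega)$ over which $F(\sigmas)$ is minimised, so $G(\psi)=W_{\Omega;0}+F(\sigma)\ge W_{\Omega;0}+F(\sigmas)=\Wad$, with equality precisely when $F(\sigma)$ is minimal, in particular when $\psi=\phi_*+\sigmas$ (so that $\sigma=\sigmas$). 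From now on we may assume $G(\psi)<\infty$, the other case being trivial.

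First I would set up the local picture near each vortex. Fix $r_0>0$ so small that the half-disks $B_{r_0}(a_j)\cap\Omega$ are pairwise disjoint and each $\partial\Omega\cap B_{r_0}(a_j)$ is a single arc. After a conformal change of variables flattening $\partial\Omega$ near $a_j$ — admissible because in two dimensions harmonicity and the Dirichlet integral are conformally invariant and the Riemann map of a $C^{1,1}$ domain is regular up to the boundary — one writes, by the standard analysis of the harmonic extension of a step-jump boundary datum, $\phi_*=d_j\theta_j+v_j$ on $B_{r_0}(a_j)\cap\Omega$, where $\theta_j$ is the model angle (so $|\nabla(d_j\theta_j)|=1/r$ and $\partial_r\theta_j=0$) and $v_j$ is harmonic with boundary data $\phib-d_j\theta_j|_{\partial\Omega}$; the jump of $\phib$ at $a_j$ cancels that of $d_j\theta_j$ for the correct orientation, while $\partial_\tau\phib=\varkappa$ off the $a_k$, so this data is Lipschitz and $v_j\in H^1(B_{r_0}(a_j)\cap\Omega)\cap C^0$. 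Writing likewise $\psi=d_j\theta_j+\tilde w_j$ and expanding $|\nabla\psi|^2=(\partial_r\psi)^2+r^{-2}(\partial_\theta\psi)^2$ in polar coordinates, using $|d_j|=1$, $\partial_r\theta_j=0$ and Fubini, gives for $0<\rho<r_0$
\begin{equation*}
  \int_{(B_{r_0}\setminus B_\rho)(a_j)\cap\Omega}|\nabla\psi|^2\,dx
  = \pi\log\tfrac{r_0}{\rho}+2d_j\int_\rho^{r_0}\frac{\tilde w_j(r,\pi)-\tilde w_j(r,0)}{r}\,dr+\int_{(B_{r_0}\setminus B_\rho)(a_j)\cap\Omega}|\nabla\tilde w_j|^2\,dx,
\end{equation*}
where $\tilde w_j(r,0),\tilde w_j(r,\pi)$ are the boundary traces of $\tilde w_j$ at the two points of $\partial\Omega$ at distance $r$ from $a_j$. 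Since $\tilde w_j=v_j+\sigma$ and $\sigma$ has zero trace, these traces equal those of $v_j$, hence are Lipschitz, so the middle integral converges absolutely as $\rho\to0$ to a constant depending only on $\phib$. Summing over $j$, subtracting $\pi N\log\tfrac1\rho$ and letting $\rho\to0$ then shows that the expression in $G(\psi)$ converges (a sum of a convergent term and the monotone term $\sum_j\int_{(B_{r_0}\setminus B_\rho)(a_j)\cap\Omega}|\nabla\tilde w_j|^2$), so the $\liminf$ there is a genuine limit; moreover its finiteness forces each $\tilde w_j\in H^1(B_{r_0}(a_j)\cap\Omega)$, whence $\sigma=\tilde w_j-v_j\in H^1$ near every $a_j$ and therefore $\sigma\in H^1(\Omega)$ with zero trace.

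It then remains to assemble. Substituting $\psi=\phi_*+\sigma$ (i.e. $\tilde w_j=v_j+\sigma$) into the $\rho\to0$ limit of the above identities and partitioning $\Omega$ into the disjoint sets $B_{r_0}(a_j)\cap\Omega$ and $\Omega\setminus\cup_jB_{r_0}(a_j)$, the pure-$\phi_*$ part reassembles — by the very same identities with $\sigma\equiv0$ — to $W_{\Omega;0}(\{(a_j,d_j)\})$, the pure-$\sigma$ part is $\int_\Omega|\nabla\sigma|^2$, and the remaining cross term equals $2X$ with $X=\int_{\Omega\setminus\cup_jB_{r_0}(a_j)}\nabla\phi_*\cdot\nabla\sigma+\sum_j\int_{B_{r_0}(a_j)\cap\Omega}\nabla v_j\cdot\nabla\sigma$ (all integrands now in $L^1$). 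Integrating by parts on each piece and using $\Delta\phi_*=\Delta v_j=0$ together with $\sigma=0$ on $\partial\Omega$, only the pairings over the fixed circles $\partial B_{r_0}(a_j)\cap\Omega$ survive, and they cancel because $\partial_r\phi_*-\partial_r v_j=d_j\partial_r\theta_j=0$; hence $X=0$. Consequently $G(\psi)=W_{\Omega;0}+\int_\Omega|\nabla\sigma|^2-\int_\Omega h_{ext}\cdot e^{i(\phi_*+\sigma)}=W_{\Omega;0}+F(\sigma)$, which is the sharper statement, and the lemma follows; in the equality case $\psi=\phi_*+\sigmas$ one has $\sigma=\sigmas$, so $G(\psi)=W_{\Omega;0}+F(\sigmas)=\Wad$.

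The main obstacle I anticipate is the local analysis underpinning the second step: rigorously establishing the splitting $\phi_*=d_j\theta_j+v_j$ with $v_j\in H^1\cap C^0$ up to the $C^{1,1}$ boundary after conformal flattening, and justifying the polar expansion uniformly in $\rho$ — in particular that its cross term is controlled solely by the Lipschitz boundary trace of $\tilde w_j$, and that the normal-derivative pairings over $\partial B_{r_0}(a_j)\cap\Omega$ in the integration by parts are well defined (via Green's formula in $H^1$ on a Lipschitz domain) and cancel. A closely related point needing care is that the renormalisation $\pi N\log\tfrac1\rho$ matches exactly the coefficient $|d_j|^2=1$ of the $r^{-2}$ term, which is what makes the limit in $G$ finite precisely when each $\tilde w_j$ has finite Dirichlet energy. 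Once these local facts are in hand, the rest is bookkeeping and integration by parts.
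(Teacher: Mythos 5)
Your argument is correct in outline and proves a slightly sharper statement than the paper does, but it gets there by a genuinely different mechanism. Both you and the paper start from the same decomposition $\psi=\phi_*+\sigma$ with $\sigma=0$ on $\partial\Omega$; the difference is how the singular cross term is handled. The paper expands $|\nabla\psi|^2$ on $\Omega_\rho=\Omega\setminus\cup_jB_\rho(a_j)$, converts $2\int_{\Omega_\rho}\nabla\phi_*\cdot\nabla\sigma$ by Green's formula into a boundary integral over the \emph{shrinking} arcs $\Omega\cap\partial B_\rho(a_j)$, and kills it with two quantitative estimates: a boundary gradient bound $|\nabla(\phi_*-\sum_k d_k\mathrm{Arg}(\cdot-a_k))|\lesssim\log\frac{1}{d(z,\partial\Omega)}$ (Hile's theorem), giving $\int_{\Omega\cap\partial B_\rho}|\partial_\nu\phi_*|^2=O(\rho\log^2\rho)$, together with a scaled trace/Poincar\'e inequality $\int_{\Omega\cap\partial B_\rho}|\sigma|^2\le C\rho\int_{\Omega_\rho}|\nabla\sigma|^2$ proved by rescaling, compactness and a bi-Lipschitz flattening. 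You instead peel off the explicit singular part $d_j\theta_j$ near each vortex, derive an exact polar identity on the half-annuli whose middle term is controlled by the Lipschitz boundary trace, and integrate by parts only across the \emph{fixed} circles $\partial B_{r_0}(a_j)\cap\Omega$, where the cancellation $\partial_r\theta_j=0$ is exact; no estimate on shrinking circles is needed, and you obtain the identity $G(\psi)=W_{\Omega;0}+F(\psi-\phi_*)$ (with $G=+\infty$ otherwise), from which the lemma and its equality case follow at once. What each approach buys: yours avoids Hile's estimate and the $\rho$-dependent trace inequality, makes the finiteness discussion ($\sigma\in H^1_0(\Omega)$, the $\liminf$ being a limit) transparent, and yields the sharper identity; the paper's route avoids your main technical burden, namely the local splitting $\phi_*=d_j\theta_j+v_j$ with $v_j\in H^1\cap C^0$ up to a $C^{1,1}$ boundary. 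Two points in your plan need to be pinned down. First, if you flatten conformally and use the model angle in flattened coordinates, the sets $B_\rho(a_j)$ in the definition of $G$ are not the flattened half-disks, so you need a core-shape-independence step (or, simpler, work directly with $\theta_j=\mathrm{Arg}(\cdot-a_j)$ in the original variables, where $\partial_r\theta_j=0$ is exact on $\partial B_{r_0}(a_j)$ and the only correction is that the angular width of $\Omega\cap\partial B_r(a_j)$ is $\pi+O(r)$, contributing a convergent term). Second, the regularity of $v_j$ (equivalently, that $\phib-d_j\mathrm{Arg}(\cdot-a_j)$ is Lipschitz near $a_j$ and its harmonic extension is in $H^1$ near the boundary) is exactly the input the paper obtains from Hile's theorem, so you could simply borrow that estimate rather than re-derive it; with either fix your proof is complete.
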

We will prove this lemma in Section 3.
\subsection{Global Jacobian}
Now we introduce the global Jacobian, which helps detect the topological singularities in both the interior and the boundary of the domain (i.e. in detecting interior and boundary vortices in the sample). Here we mention the definition of the global Jacobian, and we refer to section 1.1 in \cite{ignatk2} for more details. 

\begin{definition}
    Let $u\in H^1(\Omega;\R^2)$, where $\Omega \subset \R^2$ is a Lipschitz bounded domain. We define the \textit{global Jacobian} of $u$ as the linear functional $\mathcal{J}(u): W^{1,\infty}(\Omega) \rightarrow \R$ acting on Lipschitz test functions as follows:
    \begin{equation*}
        \langle \mathcal{J}(u), \zeta \rangle := - \int_\Omega \left( u\times\nabla u \right) \cdot\nabla^{\perp} \zeta \, dx, \ \ \ \text{for every Lipschitz function} \ \ \zeta:\Omega\rightarrow\R,
    \end{equation*}
    where $a\times b =a_1b_2-a_2b_1$ for $a=(a_1,a_2) \in \R^2$ and $b=(b_1,b_2) \in \R^2$, and $\nabla^\perp=(-\partial_{x_2}, \partial_{x_1})$. 
\end{definition}
We also define the \textit{interior Jacobian} of $u\in H^1(\Omega,\R^2)$ as 
\begin{equation*}
    \text{jac} (u) = \text{det} (\nabla u)= \partial_{x_1} u \times \partial_{x_2} u \in L^1(\Omega), 
\end{equation*}
and the \textit{boundary Jacobian} $\mathcal{J}_{bd}:W^{1,\infty}(\Omega) \rightarrow \R$  as
\begin{equation*}
    \langle \mathcal{J}_{bd}(u), \zeta \rangle := \langle \mathcal{J}(u), \zeta \rangle - 2\int_\Omega \text{jac}(u) \zeta \, dx,   \ \ \ \text{for every Lipschitz function} \ \ \zeta:\Omega\rightarrow\R.
\end{equation*}
The boundary Jacobian functional $ \mathcal{J}_{bd}$ acts only on the boundary of the domain, i.e. on $\partial \Omega$ (see proposition 2.2 in \cite{ignatk2}). Therefore, if $u\in C^2(\bar{\Omega}; \R^2)$, then integration by parts implies 
\begin{equation*}
  \langle \mathcal{J}_{bd}(u), \zeta \rangle = - \int_{\partial\Omega}\left( u\times \partial_\tau u \right) \zeta \, d\mathcal{H}^1,   \ \ \ \text{for every Lipschitz function} \ \ \zeta:\Omega\rightarrow\R ,   
\end{equation*}
where $\tau$ is the unit tangent vector at $\partial\Omega$. Note that $\zeta$ was defined only on $\Omega$, but a Lipschitz function $\zeta:\Omega\rightarrow\R$ has a unique Lipschitz extension to $\bar{\Omega}$ that we consider here. 

If $u:\Omega\rightarrow \mathbb{S}^1$ has a smooth lifting $\varphi\in C^2(\bar{\Omega}; \R)$, i.e. $u=\left(\cos\varphi, \sin\varphi\right)$ in $\Omega$, then the interior Jacobian vanishes in $\Omega$. This means that the tangential derivative of the smooth lifting $\varphi$ at the boundary carries the whole topological information of the sample and we get
\begin{equation*}
 \text{jac}(u)=0, \ \ \mathcal{J}(u) = \mathcal{J}_{bd}(u) = -\partial_\tau\varphi \mathcal{H}^1   \llcorner \partial\Omega \ \ \text{and} \ \  \langle \mathcal{J}_{bd}(u), 1 \rangle = 0  \ \ \text{if} \ \ u=e^{i\varphi} \ \ \text{in} \ \ \Omega.
\end{equation*}

\subsection{Main results}
One of our main aims is to state and prove some compactness and $\Gamma$-convergence results for the energy functional $E_\alpha$. We summarize these results in Theorem \ref{rescaled.energy.compact}. This theorem is similar to Theorem 9 in \cite{ignatk}, but here the external field effect appears which makes the theorem a generalization of that in \cite{ignatk}. In this theorem, under a specific energy bound in our regime \eqref{regime}, we first show a compactness result of the global Jacobian. In other words, we prove that the global Jacobian of the average magnetizations converges to a measure supported at the boundary of the sample $\partial\Omega$, and this measure has a finite sum of Dirac measures $\delta_{a_j}$ at the boundary vortices $a_j\in \partial\Omega$. Each of these boundary vortices $a_j$ has a nonzero multiplicity $d_j\in \mathbb{Z}\setminus \{0\}$ and the sum of their multiplicities equals two. Then we show a Gamma-convergence result at the first order for the rescaled energy $E_\alpha$ and prove that the Gamma-limit depends on the multiplicities of the boundary vortices detected by the global Jacobian. Finally, in the narrower regime \eqref{narrower.regime} and under a second-order energy bound, we show the convergence at the second order of the rescaled energy $E_\alpha$ to a Gamma-limit containing the renormalized energy $\W$ defined above, which depends on the positions of the boundary vortices. 
\begin{theorem}\label{rescaled.energy.compact}
     Let $\Omega\subset \R^2$ be a bounded, simply connected, $C^{1,1}$ domain. 
If $\alpha\to 0$, $\eta=\eta(\alpha)\to 0$ and $\varepsilon=\varepsilon(\alpha)\to 0$ satisfy the regime \eqref{regime}, then the following holds: 
Assume  $\bfm_\alpha\in H^1(\bfOm_\alpha; \Ss^2)$ is a sequence of magnetizations such that
$$\limsup_{\alpha\to 0} E_\alpha(\bfm_\alpha;\hext)<\infty$$ with $E_\alpha$ defined in \eqref{rescaled} and let $\bm=(\bar m_\alpha, \bar m_{\alpha,3})$ be the average magnetization vector field defined in \eqref{average.magnetization}. 
\begin{enumerate}
\item \textbf{Compactness of the global Jacobian and of the traces $\bm\big|_{\partial \Omega}$.} For a subsequence, the global Jacobians of the in-plane average magnetizations $\mathcal{J}(\bar m_\alpha)$ converge  
 to a measure $J$ on the closure $\bar \Omega$ 
in the following sense:
 \begin{equation*}
  \label{conv_lip}
  \sup_{|\nabla \zeta|\leq 1\textrm{ in } \Omega}\left|\left<\mathcal{J}(\bar m_\alpha)-J,\zeta\right>\right|\to 0 \quad \textrm{as }\, \alpha\to 0,
  \end{equation*}
where $J$ is supported on $\partial\Omega$ and has the form 
\begin{equation*}
\label{newlab}
J=-\varkappa \mathcal{H}^1{\llcorner \partial\Omega}+\pi \sum_{j=1}^N d_j \delta_{a_j} \quad \textrm{with} \quad  \sum_{j=1}^N d_j=2
\end{equation*}
for $N\geq 1$ distinct boundary vortices $a_j \in \partial\Omega$ carrying the non-zero multiplicities
$d_j\in\mathbb{Z}\setminus \{0\}$. 
Moreover, for a subsequence, the trace of the average magnetizations $\bm\big|_{\partial \Omega}$ converges as $\alpha\to 0$ in $L^p(\partial\Omega)$ ({for every $p\in [1, \infty)$}) to $(e^{i\phi},0)\in  BV(\partial\Omega; \Ss^1\times\{0\})$  
for a $BV$ lifting $\phi$ of the tangent field $\pm \tau$ on $\partial\Omega$ determined (up to a constant in $\pi \mathbb{Z}$) by $\partial_\tau \phi =-J$ on $\partial\Omega$.

\item \textbf{First order lower bound.} The energy satisfies
\begin{equation}
\liminf_{\alpha\to 0} E_\alpha(\bfm_\alpha;\hext) \ge \pi\sum_{j=1}^N |d_j|.
\end{equation}
\item \textbf{Single multiplicity and second order lower bound.} If additionally {$\frac{\log|\log \alpha|}{|\log \alpha|}\ll \varepsilon$} and
\begin{equation}
\limsup_{\alpha\to 0} |\log \varepsilon| \left(E_\alpha(\bfm_\alpha;\hext) - \pi\sum_{j=1}^N |d_j|\right)<\infty, 
\end{equation}
then 
the multiplicities of the vortices $a_j$ satisfy $d_j=\pm 1$ for $1\leq j\leq N$ and
 the finer
energy lower bound holds:
\begin{equation}
\liminf_{\alpha\to 0} |\log \varepsilon| \left(E_\alpha(\bfm_\alpha;\hext) - \pi N \right) 
\ge \Wad + {\gamma_0}N, 
\end{equation}
where $\gamma_0=\pi\log\frac{ e}{4\pi}$ is a constant and $ \W$ is the renormalized
energy defined in \eqref{renormalized.energy}.

\item \textbf{Strong compactness of the {rescaled} magnetization.} Under the assumptions in part 3, we also have for every $q\in[1,2)$ the bound
\[
\limsup_{\alpha\to 0}\frac1\alpha\int_{\bfOm_\alpha} |\bfa \bfm_\alpha|^q \, dx <\infty.
\]
For a subsequence we have that  $\tilde \bfm_\alpha(x,x_3):\bfOm_\alpha\to \Ss^2$ defined by 
 $\tilde \bfm_\alpha(x,x_3)=\bfm_\alpha(x,\alpha x_3)$ converges strongly in every $L^p(\bfOm_\alpha)$, $1\le p<\infty$,  to a $W^{1,q}$-map  $\tilde\bfm=(\tilde m, 0)$ with
$|\tilde\bfm|=|\tilde m|=1$ and $\partial_{x_3}\tilde\bfm=0$, i.e. $\tilde \bfm=\tilde \bfm(x)\in W^{1,q}(\Omega, \Ss^1\times \{0\})$ for every $q\in[1,2)$. Moreover, the global Jacobian $\mathcal{J}(\tilde m)$ coincides with the measure $J$ on $\bar \Omega$ given in part 1 above.
\end{enumerate}
\end{theorem}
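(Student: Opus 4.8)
\section*{Proof proposal for Theorem \ref{rescaled.energy.compact}}

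\textbf{Overall strategy.} The plan is to follow the architecture of \cite{ignatk} (their Theorem 9) and reduce everything to the two already-stated reduction steps, then import the compactness and $\Gamma$-convergence machinery for the $2D$ functionals $\eee$ and $\Ge$ developed in \cite{ignatk2}, keeping careful track of the extra Zeeman term $-\int_\Omega \hext\cdot\bfm$. The key point is that the Zeeman term is a continuous perturbation with respect to the convergences we use, so it will not affect the compactness statements (parts 1 and 4) nor the leading-order $\Gamma$-limit (part 2), and it enters the second-order expansion (part 3) exactly through the functional $F$ in Definition \ref{renormalized.definition}.

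\textbf{Part 1 (compactness).} First I would invoke Theorem \ref{rescaled+reduced} to pass from $E_\alpha(\bfm_\alpha;\hext)$ to $\eeeb(\bm;\hext)$, picking up the $-o(1)$ error, and then \eqref{lowerbound2} to bound $\frac{1}{|\log\varepsilon|}\eee(\bmp;\hex)$ from above. Since $\hex$ is a fixed bounded field and $|\bmp|\le 1$, the Zeeman term $\int_\Omega \hex\cdot\bmp$ is bounded in absolute value by $|\hex|\,|\Omega|$, so the energy bound on $E_\alpha$ yields a genuine bound on $\frac{1}{|\log\varepsilon|}\bigl(\int_\Omega|\nabla\bmp|^2+\frac1{\eta^2}\int_\Omega(1-|\bmp|^2)^2+\frac{1}{2\pi\varepsilon}\int_{\partial\Omega}(\bmp\cdot\nu)^2\bigr)$. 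From this point the compactness of the global Jacobian $\mathcal{J}(\bmp)$ and the $BV$-compactness of the trace on $\partial\Omega$ are exactly the statements proven in \cite{ignatk2} (their compactness theorem for $\eee$) and \cite{ignatk}; the structure $J=-\varkappa\mathcal{H}^1\llcorner\partial\Omega+\pi\sum d_j\delta_{a_j}$ with $\sum d_j=2$ comes from the degree-two constraint carried by the tangent field, unchanged by the field. The only thing to check is that $\bmt$ is controlled so that $\bm\to(e^{i\phi},0)$; this follows from $\frac1{\eta^2}\int_\Omega(1-|\bmp|^2)\to 0$ as in \cite{ignatk}.

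\textbf{Parts 2 and 3 (lower bounds).} For the first-order lower bound I would again chain $E_\alpha\ge\eeeb-o(1)\ge\frac1{|\log\varepsilon|}\eee(\bmp;\hex)-o(1)$ and apply the first-order $\Gamma$-liminf inequality for $\eee$ from \cite{ignatk2}, noting the Zeeman contribution is $o(1)$ relative to the $|\log\varepsilon|$ divergence. For part 3, in the narrower regime \eqref{narrower.regime} I would use the improved estimate \eqref{narrower.inequality} so that the $o(1/|\log\varepsilon|)$ error is harmless after multiplying by $|\log\varepsilon|$; the argument that $d_j=\pm1$ is the usual one (a vortex of multiplicity $|d_j|\ge 2$ costs strictly more than $|d_j|$ copies of $\pi$ at second order, contradicting the finite second-order bound), taken from \cite{ignatk2}. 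The new content is identifying the second-order $\Gamma$-liminf: after the lifting (Lemma \ref{lifting}) the energy $\Ge(\phi;\hex)$ splits, modulo lower-order terms, into the unperturbed part that produces $W_{\Omega;0}(\{(a_j,d_j)\})+\gamma_0 N$ (exactly as in \cite{ignatk}) plus the term $-\int_\Omega\hex\cdot e^{i\phi}$. Writing $\phi=\phi_*+\sigma$ with $\phi_*$ the harmonic extension of the $BV$ boundary data and $\sigma\in H^1_0$ the fluctuation, the limiting fluctuation minimizes $F(\sigma)$, and Lemma \ref{renormalized_lemma} gives precisely $G(\psi)\ge\Wad$, i.e. the bound we want. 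So part 3 reduces to showing the recovery of $\phi_*+\sigma$ in the limit and that the cross terms vanish; the $L^p$ convergence of the trace from part 1 plus the $H^1$-weak convergence of $\sigma$ away from the vortices handle the continuity of the Zeeman term.

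\textbf{Part 4 (strong compactness of the magnetization).} Here I would reproduce the argument in \cite{ignatk}: the $W^{1,q}$ bound for $q<2$ on the full $3D$ map comes from the second-order energy bound, which forces the exchange energy density to concentrate only logarithmically near finitely many boundary points, so $\int|\nabla\bfm_\alpha|^q$ stays bounded for $q<2$ by a Hölder/truncation argument. Rescaling the thin direction, $\partial_{x_3}\tilde\bfm_\alpha\to 0$ in $L^2$ because the stray-field and exchange terms penalize $x_3$-dependence at order $\alpha$, giving an $x_3$-independent limit $\tilde\bfm=(\tilde m,0)$ with $|\tilde m|=1$; the identification $\mathcal{J}(\tilde m)=J$ follows since the global Jacobian is continuous under the strong $L^p$ plus weak $W^{1,q}$ convergence and agrees with the limit of $\mathcal{J}(\bmp)$ from part 1. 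I expect the main obstacle to be part 3: making rigorous the splitting $\phi=\phi_*+\sigma$ at the level of the $\Gamma$-liminf — in particular controlling the interaction between the vortex cores (where $\phi_*$ is singular) and the Zeeman term, and ensuring the ball-construction lower bound of \cite{ignatk2} composes cleanly with the (compact, continuous) field perturbation without losing the sharp constant $\gamma_0$. The matching $\Gamma$-limsup (recovery sequence) should be routine: take the recovery sequence of \cite{ignatk} for the unperturbed problem and correct it by the minimizer $\sigmas$ of $F$, which is an $H^1$ function and hence does not disturb the core construction.
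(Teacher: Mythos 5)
Your proposal follows essentially the same route as the paper: treat the Zeeman term as an $O(1)$ (hence $o(|\log\varepsilon|)$) perturbation, chain $E_\alpha(\bfm_\alpha;\hext)\ge \eeeb(\bm;\hext)-o(1)\ge \frac{1}{|\log\varepsilon|}\eee(\bar m_\alpha;\hex)-o(1)$ via Theorem \ref{rescaled+reduced} and \eqref{lowerbound2}, import the no-field compactness and the first/second-order lower bounds (Theorems \ref{theorem9ignatk}, \ref{epsilon.eta.energy}, \ref{Gepsilon.theorem}), and let the external field enter only at second order through $F$ and Lemma \ref{renormalized_lemma}. The only organizational difference is that the paper dispatches parts 1 and 4 by observing that the field-free energy $E_\alpha(\bfm_\alpha)$ obeys the same (first- and second-order) bounds and citing Theorem \ref{theorem9ignatk} directly, rather than re-routing part 1 through the 2D reduction (where the trace statement for $\bar m_{\alpha,3}$ needs the boundary argument of the cited reference, not just the interior penalization) or re-deriving part 4.
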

To prove this theorem, we use the following:
\begin{itemize}
    \item Theorem 9 in \cite{ignatk}, which has compactness and Gamma-convergence results for the energy $E_\alpha$ in the absence of an external field. In other words, substituting $\hext=0$ in Theorem \ref{rescaled.energy.compact} gives Theorem 9, so there is no need to state Theorem 9 here. However, it will be useful to name this theorem so we can return to it easily.
    \begin{theorem}\label{theorem9ignatk}
    Substitute $\hext=0$ in Theorem \ref{rescaled.energy.compact} to get the statement of this theorem (or see Theorem 9 in \cite{ignatk}).
    \end{theorem}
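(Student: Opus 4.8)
The plan is to fix $\hext=0$ and to transfer the whole analysis to the two-dimensional boundary-vortex functional through the two reductions already available. First I would apply Theorem \ref{rescaled+reduced} at $\hext=0$: the hypothesis $\limsup_{\alpha\to0}E_\alpha(\bfm_\alpha;0)<\infty$ yields $\limsup_{\alpha\to0}\bar E_\alpha(\bm;0)<\infty$, and then \eqref{lowerbound2} gives $\limsup_{\alpha\to0}\frac{1}{|\log\varepsilon|}E_{\varepsilon,\eta}(\bar m_\alpha;0)<\infty$. It therefore suffices to establish all four conclusions for the planar maps $u_\alpha:=\bar m_\alpha\in H^1(\Omega;\R^2)$ under the single bound $E_{\varepsilon,\eta}(u_\alpha;0)\le C|\log\varepsilon|$, and then push the statements back through the reduction. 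For the second-order assertion (part 3) the reduction must be applied in the narrower regime \eqref{narrower.regime}, where its error is $o(1/|\log\varepsilon|)$ rather than $o(1)$, so that the $O(1/|\log\varepsilon|)$ layer carrying the renormalized energy is not destroyed.

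For part 1, the potential term forces $|u_\alpha|\to1$ in $L^2(\Omega)$ and the boundary term forces the trace toward the tangent line field $\pm\tau$. Passing to the lifting of Lemma \ref{lifting} and invoking the global-Jacobian estimates of \cite{ignatk2}, I would show that $\mathcal{J}(u_\alpha)$ is precompact in the dual-Lipschitz sense and converges to a measure $J$ on $\bar\Omega$. Interior vortices are excluded because each one costs energy of order $\pi|\log\eta|$, while in the regime \eqref{regime} one has $|\log\varepsilon|\ll|\log\alpha|\sim2|\log\eta|$, so an interior vortex would exceed the budget $C|\log\varepsilon|$; hence $\mathrm{jac}(u_\alpha)\to0$ and $J$ is supported on $\partial\Omega$. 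Its diffuse part $-\varkappa\mathcal{H}^1\llcorner\partial\Omega$ is the contribution of the rotating tangent field and the atoms $\pi d_j\delta_{a_j}$ are the boundary vortices. Since $\langle\mathcal{J}(u_\alpha),1\rangle=0$ for every $\alpha$ (the Jacobian pairs only with $\nabla^\perp\zeta$), the limit satisfies $\langle J,1\rangle=0$, which together with $\int_{\partial\Omega}\varkappa\,d\mathcal{H}^1=2\pi$ forces $\sum_j d_j=2$. The $BV$-compactness of the lifting then gives the trace convergence to $(e^{i\phi},0)$ with $\partial_\tau\phi=-J$.

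For parts 2 and 3, a boundary vortex-ball lower bound localized near each $a_j$ gives $E_{\varepsilon,\eta}(u_\alpha;0)\ge\pi\sum_j d_j^2\,|\log\varepsilon|\,(1+o(1))$; dividing by $|\log\varepsilon|$ and using $d_j^2\ge|d_j|$ yields $\liminf_{\alpha\to0}E_\alpha\ge\pi\sum_j|d_j|$. For the single-multiplicity claim the refined localized bound is sharp at order $d_j^2$, so the hypothesis $\limsup_{\alpha\to0}|\log\varepsilon|(E_\alpha-\pi\sum_j|d_j|)<\infty$ keeps $\pi(d_j^2-|d_j|)|\log\varepsilon|$ bounded for each $j$; since $|\log\varepsilon|\to\infty$ this forces $d_j^2=|d_j|$, i.e.\ $d_j=\pm1$ and $N=\sum_j|d_j|$.

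With $d_j=\pm1$ I would split $E_{\varepsilon,\eta}$ into a core part near each vortex and a far-field part. After subtracting $\pi|\log\varepsilon|$, the core part converges to the optimal boundary-vortex core constant $\gamma_0=\pi\log\frac{e}{4\pi}$ per vortex (from the half-plane profile), while the far-field part, expressed through the harmonic lifting $\phi_*$ of Definition \ref{unperturbed.renormalized.def} and with the $\pi N\log(1/\rho)$ singular parts excised, converges to $W_{\Omega;0}(\{(a_j,d_j)\})$; since $\hext=0$ the perturbation $F(\sigma_*)$ of Definition \ref{renormalized.definition} vanishes, so the renormalized energy reduces to $W_{\Omega;0}$. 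Combining and transporting the error in the narrower regime gives $\liminf_{\alpha\to0}|\log\varepsilon|(E_\alpha-\pi N)\ge W_{\Omega;0}(\{(a_j,d_j)\})+\gamma_0 N$. For part 4, the vortex profile yields $|\nabla u_\alpha|\le C/\mathrm{dist}(\cdot,\{a_j\})$ and hence a uniform $W^{1,q}$ bound for every $q\in[1,2)$; the limit is $x_3$-independent and $\Ss^1\times\{0\}$-valued, and one identifies its global Jacobian with $J$. The main obstacle is the sharp second-order lower bound: simultaneously isolating the correct core constant $\gamma_0$ and the renormalized energy $W_{\Omega;0}$, a boundary analogue of the interior Ginzburg--Landau vortex analysis of \cite{Brezis}, and controlling the reduction error down to $o(1/|\log\varepsilon|)$, which is exactly where the narrower regime \eqref{narrower.regime} and the delicate lower-bound constructions of \cite{ignatk2} are needed.
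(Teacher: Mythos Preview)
The paper does not prove this theorem at all: it is Theorem~9 of \cite{ignatk}, imported as a black box and then used as an \emph{input} to prove Theorem~\ref{rescaled.energy.compact} (parts 1 and 4 of the latter are obtained by invoking it directly). So there is no ``paper's own proof'' to compare with; what you have written is a sketch of the argument in the cited reference.

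Your outline is indeed the strategy of \cite{ignatk}: reduce the 3D energy to the 2D functional $E_{\varepsilon,\eta}$ via the dimension-reduction estimate (Theorem~\ref{rescaled+reduced} with $\hext=0$, which is Theorem~1 of \cite{ignatk}), and then apply the 2D compactness and two-order $\Gamma$-expansion for boundary vortices from \cite{ignatk2}. The overall architecture you describe---Jacobian compactness, exclusion of interior vortices because $|\log\varepsilon|\ll|\log\eta|$, boundary trace convergence, core/far-field splitting in the narrower regime---matches that reference.

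One point does not go through as written. Your first-order localized bound $E_{\varepsilon,\eta}\ge \pi\sum_j d_j^2\,|\log\varepsilon|(1+o(1))$ is too strong for \emph{limiting} multiplicities: a boundary vortex with limiting multiplicity $d_j$ can arise from $|d_j|$ coalescing multiplicity-$\pm1$ vortices at scale $\varepsilon$, each costing only $\pi|\log\varepsilon|$, so the correct first-order lower bound is $\pi\sum_j|d_j|\,|\log\varepsilon|$ (this is exactly how Theorem~\ref{Gepsilon.theorem}, part~1, is stated). Consequently your single-multiplicity argument ``$\pi(d_j^2-|d_j|)|\log\varepsilon|$ stays bounded, hence $d_j^2=|d_j|$'' is not available. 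In \cite{ignatk2} the conclusion $d_j=\pm1$ under the second-order bound is obtained differently, essentially because merging several degree-$\pm1$ vortices into one point makes the interaction (renormalized-energy) contribution diverge at the next order; this is a genuinely second-order mechanism, not a first-order $d_j^2$ versus $|d_j|$ comparison.
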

    \item Theorem \ref{rescaled+reduced}, which connects the three-dimensional energy $E_\alpha$ with the two-dimensional energy $\eeeb$.
    \item Some compactness and Gamma-convergence results for the energies $\Ge$ and $\eee$ given by \eqref{Gepsi.definition} and \eqref{epsilon.eta.energy.definition}, respectively.  
\end{itemize}

Therefore, let us state some compactness and $\Gamma$-convergence results for the energy $\Ge$, which are similar to the results of Theorem 4.2 in \cite{ignatk2}, but in the presence of an external field in this case. 
\begin{theorem}\label{Gepsilon.theorem}
 Let $\Omega \subset \R^{2}$ be a bounded, simply connected and $C^{1,1}$  domain.
\begin{enumerate}
  \item $L^{p}(\partial \Omega)$ \textbf{compactness and first order lower bound}: 
  Let $\left(\phie\right)_{\varepsilon}$ be a sequence / family in $H^{1}(\Omega ; \R)$ such that
\begin{equation*}
    \limsup _{\varepsilon \rightarrow 0} \frac{1}{|\log \varepsilon|} \Ge\left(\phie;\hex\right)<\infty, 
    \end{equation*}
then there is a sequence/ family $\left(z_{\varepsilon}\right)_{\varepsilon}$ of integers such that $\left(\phie-\pi z_{\varepsilon}\right)_{\varepsilon}$ converges (up to a subsequence) strongly in $L^{p}(\partial \Omega)$ to a limit $\phi_0$ such that $\phi_0-g \in B V(\partial \Omega ; \pi \mathbb{Z})$ with $g$ given above in \eqref{liftingtangent} and
\begin{equation*}
    \partial_{\tau} \phi_0=\varkappa-\pi \sum_{j=1}^{N} d_{j} \delta_{a_{j}}, \quad a_{j} \in \partial \Omega \  \text { distinct points, } d_{j} \in \mathbb{Z} \backslash\{0\} \quad \text { with } \sum_{j=1}^{N} d_{j}=2 ,
\end{equation*}
and $\partial_{\tau} \phie \rightarrow \partial_{\tau} \phi_0$ in $W^{-1, p}(\partial \Omega)$ for every $1 \leq p<\infty$. Furthermore, we have the following first-order lower bound
\begin{equation}\label{lowerbound1}
    \liminf _{\varepsilon \rightarrow 0} \frac{1}{|\log \varepsilon|} \Ge\left(\phie;\hex\right) \geq\left|\partial_{\tau} \phi_0-\varkappa\right|(\partial \Omega)=\pi \sum_{j=1}^{N}\left|d_{j}\right| .
\end{equation}
  \item $W^{1, q}(\Omega)$ \textbf{weak compactness and second order lower bound}: 
  Let $\left(\phie\right)_{\varepsilon}$ be a sequence/ family in $H^{1}(\Omega ; \R)$ satisfying the convergence at the previous point with the limit $\phi_0$ on $\partial \Omega$ as $\varepsilon \rightarrow 0$. If additionally we assume that
\begin{equation*}
    \limsup _{\varepsilon \rightarrow 0}\left(\Ge\left(\phie;\hex\right)-\pi|\log \varepsilon| \sum_{j=1}^{N}\left|d_{j}\right|\right)<\infty , 
\end{equation*}
then $d_{j} \in\{ \pm 1\}$ for all $j=1, \ldots, N, \left(\nabla \phie\right)_{\varepsilon}$ converges weakly (for a subsequence) in $L^{q}\left(\Omega ; \R^{2}\right)$ for any $q \in[1,2)$ to $\nabla \phihz$, where $\phihz \in W^{1, q}(\Omega)$ is an extension of $\phi_0$ to $\Omega$
. The following second-order lower bound holds for the sequence/ family $\varepsilon \rightarrow 0$ :
\begin{equation}\label{secondlowerbound}
    \liminf _{\varepsilon \rightarrow 0}\left(\Ge\left(\phie; \hex\right)-\pi N|\log \varepsilon|\right) \geq \Wad+N \gamma_{0},
\end{equation}
where $\W$ is the renormalized energy and $\gamma_{0}=\pi \log \frac{e}{4 \pi}$.
  \item \textbf{Upper bound construction}: Let $\phi_0: \partial \Omega \rightarrow \R$ be such that $\partial_{\tau} \phi_0=\varkappa-\pi \sum_{j=1}^{N} d_{j} \delta_{a_{j}}$, $d_{j} \in \mathbb{Z} \backslash\{0\}$ with $\sum_{j=1}^{N} d_{j}=2$, $e^{i \phi_0} \cdot \nu=0$ in $\partial \Omega \backslash\left\{a_{1}, \ldots, a_{N}\right\}$. Then for every $\varepsilon>0$ small, there exists $\phihe \in H^{1}(\Omega; \R)$ such that $\phihe \rightarrow \phi_0$ in $L^{p}(\partial \Omega)$ and $\phihe \rightarrow \sigmas + \phi_*$ in $L^{p}(\Omega)$ for every $p \in[1, \infty)$ where $\sigmas$ is a minimizer of 
  \begin{equation*}
    F(\sigma) = \int_\Omega \left(|\nabla \sigma|^2   - h_{ext}\cdot e^{i(\sigma+\phi_*)}\right) \, dx,
\end{equation*}
and $\phi_*$ is the function mentioned above in the definition of the unperturbed renromalized energy (Definition \ref{unperturbed.renormalized.def}).
   Moreover,
\begin{equation*}
    \limsup _{\varepsilon \rightarrow 0} \frac{1}{|\log \varepsilon|} \Ge\left(\phihe; \hex\right)=\pi \sum_{j=1}^{N}\left|d_{j}\right| . 
    \end{equation*}
If in addition $d_{j}= \pm 1$ for all $j$, then we have additionally
\begin{equation*}
\limsup _{\varepsilon \rightarrow 0}\left(\Ge\left(\phihe ; \hex\right)-N \pi \log \frac{1}{\varepsilon}\right)=\Wad+N \gamma_{0}.
\end{equation*}
\end{enumerate}
\end{theorem}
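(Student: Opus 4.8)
The plan is to treat the Zeeman term $-\int_{\Omega}\hex\cdot e^{i\phi}\,dx$ as an $O(1)$ continuous perturbation of the field-free functional $\Ge(\cdot;0)=\int_{\Omega}|\nabla\phi|^{2}\,dx+\frac{1}{2\pi\varepsilon}\int_{\partial\Omega}\sin^{2}(\phi-g)\,d\mathcal{H}^{1}$ studied in Theorem~4.2 of \cite{ignatk2}, so that all of the compactness and the leading-order $\Gamma$-behaviour are inherited from there, while the new renormalized energy $\Wad$ enters only at second order, through Lemma~\ref{renormalized_lemma}. Since $|e^{i\phi}|=1$ we have $|\Ge(\phi;\hex)-\Ge(\phi;0)|\le|\hex|\,|\Omega|$, so an energy bound for $\Ge(\cdot;\hex)$ holds precisely when the corresponding bound holds for $\Ge(\cdot;0)$. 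Part~1 then reduces verbatim to \cite{ignatk2}: after dividing by $|\log\varepsilon|$ the Zeeman term disappears, so the $L^{p}(\partial\Omega)$ compactness of $(\phie-\pi z_\varepsilon)$, the structure of $\phi_0$ and $\partial_{\tau}\phi_0$, the convergence $\partial_{\tau}\phie\to\partial_{\tau}\phi_0$ in $W^{-1,p}$, and the first-order bound \eqref{lowerbound1} are exactly Theorem~4.2(1) of \cite{ignatk2} applied to $\Ge(\cdot;0)$.

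For the second-order lower bound (Part~2) I would pass to the subsequence of Theorem~4.2(2) of \cite{ignatk2}, along which $d_{j}=\pm1$ and $\nabla\phie\rightharpoonup\nabla\phihz$ weakly in $L^{q}(\Omega;\R^{2})$ for every $q\in[1,2)$, with $\phihz\in W^{1,q}(\Omega)$ an extension of $\phi_0$. Using this together with the $L^{p}(\partial\Omega)$-convergence of the traces, $(\phie-\pi z_\varepsilon)$ is bounded in $W^{1,q}(\Omega)$, so a compact-embedding argument gives, along a further subsequence, $\phie-\pi z_\varepsilon\to\phihz$ strongly in $L^{q}(\Omega)$, hence a.e.; after possibly replacing $z_\varepsilon$ by $z_\varepsilon+1$ (equivalently $\phi_0$ by $\phi_0-\pi$, a choice left open in Part~1) I may assume $e^{i\phie}\to e^{i\phihz}$ in every $L^{p}(\Omega)$ by dominated convergence, hence $\int_{\Omega}\hex\cdot e^{i\phie}\to\int_{\Omega}\hex\cdot e^{i\phihz}$. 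I then localize around the vortices: for small $\rho>0$ the single-vortex core-energy estimate of \cite{ignatk2} gives
\[
\int_{\Omega\cap B_{\rho}(a_{j})}|\nabla\phie|^{2}\,dx+\frac{1}{2\pi\varepsilon}\int_{\partial\Omega\cap B_{\rho}(a_{j})}\sin^{2}(\phie-g)\,d\mathcal{H}^{1}\ \ge\ \pi\log\frac{\rho}{\varepsilon}+\gamma_{0}+o_{\varepsilon}(1)+o_{\rho}(1),
\]
and, subtracting the $N$ core energies from the total (which is $\le\pi N|\log\varepsilon|+O(1)$ by hypothesis), the gradients are bounded in $L^{2}(\Omega\setminus\cup_{j}B_{\rho}(a_{j}))$, so weak lower semicontinuity yields $\liminf_{\varepsilon}\int_{\Omega\setminus\cup_{j}B_{\rho}(a_{j})}|\nabla\phie|^{2}\ge\int_{\Omega\setminus\cup_{j}B_{\rho}(a_{j})}|\nabla\phihz|^{2}$. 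Combining these two facts with the convergent Zeeman term,
\[
\liminf_{\varepsilon\to0}\bigl(\Ge(\phie;\hex)-\pi N|\log\varepsilon|\bigr)\ \ge\ \Bigl(\int_{\Omega\setminus\cup_{j}B_{\rho}(a_{j})}|\nabla\phihz|^{2}-\pi N\log\frac{1}{\rho}\Bigr)+N\gamma_{0}-\int_{\Omega}\hex\cdot e^{i\phihz}+o_{\rho}(1);
\]
letting $\rho\to0$, the right-hand side is exactly $G(\phihz)+N\gamma_{0}$ in the notation of Lemma~\ref{renormalized_lemma}, and that lemma gives $G(\phihz)\ge\Wad$, which is \eqref{secondlowerbound}.

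For the upper bound (Part~3) I would take the field-free recovery sequence $(\psi_\varepsilon)$ of Theorem~4.2(3) of \cite{ignatk2}: it satisfies $\psi_\varepsilon\to\phi_*$ in $L^{p}(\Omega)$, $\psi_\varepsilon\to\phi_0$ in $L^{p}(\partial\Omega)$, $\nabla\psi_\varepsilon\rightharpoonup\nabla\phi_*$ in $L^{q}(\Omega;\R^{2})$, $\frac{1}{|\log\varepsilon|}\Ge(\psi_\varepsilon;0)\to\pi\sum_{j}|d_{j}|$, and, when all $d_{j}=\pm1$, $\Ge(\psi_\varepsilon;0)-N\pi\log\frac{1}{\varepsilon}\to W_{\Omega;0}+N\gamma_{0}$, where $\phi_*$ is the harmonic extension from Definition~\ref{unperturbed.renormalized.def}. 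Then I set $\phihe:=\psi_\varepsilon+\sigmas$, with $\sigmas$ a minimizer of $F$ vanishing on $\partial\Omega$; its Euler--Lagrange equation $2\Delta\sigmas=-\hex\cdot\bigl(i\,e^{i(\sigmas+\phi_*)}\bigr)\in L^{\infty}(\Omega)$ together with $\partial\Omega\in C^{1,1}$ gives (by elliptic regularity) $\sigmas\in C^{1}(\bar\Omega)$, so $\nabla\sigmas\in L^{\infty}(\Omega)$ and $\sigmas=0$ on $\partial\Omega$. Because $\sigmas$ vanishes on the boundary, the boundary penalization of $\phihe$ equals that of $\psi_\varepsilon$, so
\[
\Ge(\phihe;\hex)=\Ge(\psi_\varepsilon;0)+2\int_{\Omega}\nabla\psi_\varepsilon\cdot\nabla\sigmas\,dx+\int_{\Omega}|\nabla\sigmas|^{2}\,dx-\int_{\Omega}\hex\cdot e^{i(\psi_\varepsilon+\sigmas)}\,dx .
\]
Since $\nabla\sigmas\in L^{\infty}\subset L^{q'}$, the cross term tends to $2\int_{\Omega}\nabla\phi_*\cdot\nabla\sigmas\,dx$, which vanishes: integrating by parts over $\Omega\setminus\cup_{j}B_{\rho}(a_{j})$, the term over $\partial\Omega$ is $0$ since $\sigmas=0$ there, the interior term is $0$ since $\phi_*$ is harmonic, and the remaining small-circle integrals are bounded by $\|\sigmas\|_{L^{\infty}(\partial B_{\rho}(a_{j}))}\int_{\partial B_{\rho}(a_{j})\cap\Omega}|\nabla\phi_*|=O(\rho)$, because $|\sigmas(x)|\le C|x-a_{j}|$ near $a_{j}$ while $\int_{\partial B_{\rho}(a_{j})\cap\Omega}|\nabla\phi_*|\le C$. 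Using also $e^{i(\psi_\varepsilon+\sigmas)}\to e^{i(\sigmas+\phi_*)}$ in $L^{p}$,
\[
\Ge(\phihe;\hex)-N\pi\log\frac{1}{\varepsilon}\ \to\ W_{\Omega;0}+\int_{\Omega}\bigl(|\nabla\sigmas|^{2}-\hex\cdot e^{i(\sigmas+\phi_*)}\bigr)dx+N\gamma_{0}=W_{\Omega;0}+F(\sigmas)+N\gamma_{0}=\Wad+N\gamma_{0},
\]
while $\frac{1}{|\log\varepsilon|}\Ge(\phihe;\hex)\to\pi\sum_{j}|d_{j}|$ because the added terms are $O(1)$; moreover $\phihe\to\sigmas+\phi_*$ in $L^{p}(\Omega)$ and $\phihe\to\phi_0$ in $L^{p}(\partial\Omega)$.

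The compactness results, the single-vortex core constant $\gamma_{0}$, and the recovery construction are all imported from \cite{ignatk2}; the genuinely new work is to check that the $O(1)$ Zeeman term is continuous along each relevant convergence---so that it disturbs neither the compactness nor, decisively, the $O(\log\frac{1}{\varepsilon})$ vortex-core analysis---and to identify the weak limit correctly (the sign of $e^{i\phie}$, and hence $\phihz$) so that the Zeeman contribution merges with the Dirichlet part into the functional $G$ of Lemma~\ref{renormalized_lemma}. I expect the main obstacle to be precisely this matching: showing that the ball construction produces $G(\phihz)$ and not some larger, localization-dependent quantity, and then closing the gap between the lower and upper bounds via $G(\phihz)\ge\Wad$ with equality at $\phihz=\phi_*+\sigmas$---the step where the new definition of the renormalized energy and Lemma~\ref{renormalized_lemma} replace a direct minimization over extensions, together with the cancellation $\int_{\Omega}\nabla\phi_*\cdot\nabla\sigmas\,dx=0$ on the upper-bound side.
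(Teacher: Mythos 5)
Your Parts 1 and 3 follow essentially the same path as the paper: the Zeeman term is an $O(1)$ perturbation bounded by $|\hex|\,|\Omega|$, so compactness and the first-order bound are imported from Theorem 4.2 of \cite{ignatk2}, and the recovery sequence is the field-free one plus the minimizer $\sigmas$ (vanishing on $\partial\Omega$, regular by elliptic theory), with the cross term $\int_\Omega\nabla\phi_*\cdot\nabla\sigmas\,dx=0$; your small-circle justification of that cancellation is in fact more careful than the paper's bare appeal to Green's formula. The genuine difference is the second-order lower bound in Part 2. The paper splits $\phie=\lambdae+\thetae$ with $\lambdae$ the harmonic extension of the trace and $\thetae\in H^1_0(\Omega)$, applies the field-free inequality (48) of \cite{ignatk2} to the harmonic part (which carries the boundary penalization and produces $W_{\Omega;0}(\{(a_j,d_j)\})+N\gamma_0$), and treats $\int_\Omega|\nabla\thetae|^2-\int_\Omega\hex\cdot e^{i\phie}$ by weak lower semicontinuity together with the minimality of $\sigmas$, so Lemma \ref{renormalized_lemma} is not needed at this point. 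You instead localize: sharp per-vortex core lower bounds $\pi\log(\rho/\varepsilon)+\gamma_0+o(1)$, $L^2$-bounds and weak lower semicontinuity away from the vortices, convergence of the Zeeman term, and then Lemma \ref{renormalized_lemma} applied to the $W^{1,q}$-limit $\phihz$ to pass from $G(\phihz)$ to $\Wad$; this uses the new definition of the renormalized energy exactly as it was designed, and you are more explicit than the paper about the mod-$\pi$ representative issue when identifying the limit of $e^{i\phie}$, which is a real subtlety since it affects the sign entering the Zeeman term. The cost of your route is the key import: a localized lower bound with the sharp constant $\gamma_0$ for arbitrary energy-bounded sequences is not quoted anywhere in this paper (its local bound, part 5 of Theorem \ref{epsilon.eta.energy}, only carries an unspecified constant $-C$) and would have to be extracted from the proofs in \cite{ignatk2}, whereas the paper's decomposition only needs the global statement (48) for the harmonic part. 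Provided that localized core estimate is indeed available in the cited form, your argument closes and yields the same bound; otherwise the harmonic/zero-trace splitting is the more economical path.
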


The proof of this theorem appears in section 3. Now, we state some compactness and Gamma-convergence results for the energy $\eee$. This theorem is similar to Theorem 18 in \cite{ignatk}, but again it has the effect of adding the external field. 
\begin{theorem}\label{epsilon.eta.energy}
Let $\Omega \subset \R^{2}$ be a bounded, simply connected $C^{1,1}$ domain, $\varepsilon \rightarrow 0$ and $\eta \rightarrow 0$ be sequences / families satisfying $|\log \varepsilon| \ll|\log \eta|$. Assume $u_{\varepsilon} \in H^{1}\left(\Omega ;\R^{2}\right)$ satisfy
\begin{equation}
\limsup _{\varepsilon \rightarrow 0} \frac{1}{|\log \varepsilon|} \eee\left(u_{\varepsilon}; \hex\right)<\infty .
\end{equation}
\begin{enumerate}
    \item \textbf{Compactness of global Jacobians and $L^{p}(\partial \Omega)$-compactness of $\left.u_{\varepsilon}\right|_{\partial \Omega}$}: For a subsequence, the global Jacobians $\mathcal{J}\left(u_{\varepsilon}\right)$ converge to a measure $J$ on the closure $\bar{\Omega}$ in the following sense 
    \begin{equation*}
       \sup_{|\nabla\xi|\leq 1  \ \text{in} \ \Omega} \left|\left\langle\mathcal{J}(u_\varepsilon) - J, \xi \right\rangle \right| \rightarrow 0 \ \ \text{as} \ \ \alpha\rightarrow 0 , \ \ \forall \xi \in W^{1,\infty}(\Omega), 
    \end{equation*}
   Moreover, $J$ is supported on $\partial \Omega$ and has the form 
   \begin{equation*}
       J = -\varkappa \mathcal{H}^1 \llcorner \partial\Omega + \pi \sum_{j=1}^{N}d_j \delta_{a_j}, \ \ \text{where} \ \ \sum_{j=1}^{N}d_j = 2 , 
   \end{equation*}
   for $N$ distinct boundary vortices $a_{j} \in \partial \Omega$ carrying the non-zero multiplicities $d_{j} \in \mathbb{Z} \backslash\{0\}$. Moreover, for a subsequence, the trace $\left.u_{\varepsilon}\right|_{\partial \Omega}$ converges as $\varepsilon \rightarrow 0$ in $L^{p}(\partial \Omega)$ for every $p \in[1, \infty)$ to $e^{i \phi} \in B V\left(\partial \Omega ; \mathbb{S}^{1}\right)$ for a lifting $\phi$ of the tangent field $\pm \tau$ on $\partial \Omega$ determined (up to a constant in $\pi \mathbb{Z}$ ) by $\partial_{\tau} \phi=-J$ on $\partial \Omega$.
\item \textbf{Energy lower bound at the first order}: If $\left(u_{\varepsilon}\right)$ satisfies the convergence assumption on the Jacobians as the sequence/family $\varepsilon \rightarrow 0$ as in the first point, then
\begin{equation*}
\liminf _{\varepsilon \rightarrow 0} \frac{1}{|\log \varepsilon|} \eee\left(u_{\varepsilon};\hex\right) \geq \pi \sum_{j=1}^{N}\left|d_{j}\right| =\mid J+\varkappa \mathcal{H}^{1}\llcorner\partial \Omega \mid(\partial \Omega) .
\end{equation*}
If, in addition, we assume the following sharper bound:
\begin{equation}\label{sharper.bound}
\limsup _{\varepsilon \rightarrow 0}\left(\eee\left(u_{\varepsilon};\hex\right)-\pi \sum_{j=1}^{N}\left|d_{j}\right||\log \varepsilon|\right)<\infty,
\end{equation}
then we get:
\item \textbf{Single multiplicity and second order lower bound}. The multiplicities satisfy $d_{j}= \pm 1$ for $1 \leq j \leq N$, so $\sum_{j=1}^{N}\left|d_{j}\right|=N$ and there holds the finer energy bound
\begin{equation*}
\liminf _{\varepsilon \rightarrow 0}\left(\eee\left(u_{\varepsilon}; \hex\right)-\pi N|\log \varepsilon|\right) \geq\Wad+\gamma_{0} N
\end{equation*}
where $\gamma_{0}=\pi \log \frac{e}{4 \pi}$ and $\W$ is the renormalised energy .
\item \textbf{Penalty bound}. The penalty terms are of order $O(1)$, i.e.
\begin{equation*}
\limsup _{\varepsilon \rightarrow 0}\left(\frac{1}{\eta^{2}} \int_{\Omega}\left(1-\left|u_{\varepsilon}\right|^{2}\right)^{2} d x+\frac{1}{2 \pi \varepsilon} \int_{\partial \Omega}\left(u_{\varepsilon} \cdot \nu\right)^{2} d \mathcal{H}^{1}-\int_{\Omega} \hex \cdot u_\varepsilon dx\right)<\infty
\end{equation*}
\item \textbf{Local energy lower bound}: There are $\rho_{0}>0, \varepsilon_{0}>0$ and $C>0$ such that the energy of $u_{\varepsilon}$ near the singularities satisfies for all the $\varepsilon<\varepsilon_{0}$ in the sequence / family and $\rho<\rho_{0}$ :
\begin{equation*}
\left(\int_{\Omega \cap \bigcup_{j=1}^{N} B_{\rho}\left(a_{j}\right)}\left|\nabla u_{\varepsilon}\right|^{2} d x-\pi N \log \frac{\rho}{\varepsilon}\right)>-C .
\end{equation*}
\item \textbf{$L^{p}(\Omega)$-compactness of maps $u_{\varepsilon}$}: For any $q \in[1,2)$, the sequence/family $\left(u_{\varepsilon}\right)_{\varepsilon}$ is uniformly bounded in $W^{1, q}\left(\Omega ; \R^{2}\right)$. Moreover, for a subsequence, $u_{\varepsilon}$ converges as $\varepsilon \rightarrow 0$ strongly in $L^{p}\left(\Omega ; \R^{2}\right)$ for any $p \in[1, \infty)$ to $e^{i \hat{\phi}}$, where $\hat{\phi} \in W^{1, q}(\Omega)$ is an extension to $\Omega$ of the lifting $\phi \in B V(\partial \Omega)$ determined in the first point.

\item \textbf{Matching upper-bound construction:} Given any $N$ distinct points $a_{j} \in \partial \Omega$ with their multiplicity $d_{j} \in \mathbb{Z} \backslash\{0\}$ satisfying the constraint $\sum_{j=1}^{N} d_{j}=2$, we can construct for every $\varepsilon \in\left(0, \frac{1}{2}\right), \
 u_{\varepsilon} \in H^{1}\left(\Omega ; \mathbb{S}^{1}\right)$ such that the global Jacobians $\mathcal{J}\left(u_{\varepsilon}\right)$ converge to \begin{equation*}
     J=-\varkappa \mathcal{H}^{1}\left\llcorner\partial \Omega+\pi \sum_{j=1}^{N} d_{j} \delta_{a_{j}}\right.
 \end{equation*} as in part 1 of this theorem. Furthermore, $u_{\varepsilon}$ converge strongly to a canonical harmonic map $m_{*}$ associated to $\left\{\left(a_{j}, d_{j}\right)\right\}$ in $L^{p}(\Omega)$ and $L^{p}(\partial \Omega)$ for all $p \in[1, \infty)$, and the energies satisfy
\begin{equation}
\lim _{\varepsilon \rightarrow 0} \frac{1}{|\log \varepsilon|} \eee\left(u_{\varepsilon};\hex\right)=\pi \sum_{j=1}^{N}\left|d_{j}\right|.
\end{equation}
If furthermore $\left|d_{j}\right|=1$ for all $j=1, \ldots, N$, then $u_{\varepsilon}$ can be chosen such that
\begin{equation}
\lim _{\varepsilon \rightarrow 0}\left(\eee\left(u_{\varepsilon};\hex\right)-\pi N|\log \varepsilon|\right)=\Wad+N \gamma_{0} .
\end{equation}
\end{enumerate}
\end{theorem}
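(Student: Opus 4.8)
The plan is to regard the Zeeman term $-\int_\Omega \hex\cdot u_\varepsilon\,dx$ as a bounded, lower-order perturbation of the field-free energy $\eee(\cdot;0)$ analysed in Theorem~18 of \cite{ignatk}, so that the compactness statements and the first-order lower bound (parts~1 and~2), the penalty bound (part~4), the local lower bound (part~5), the $W^{1,q}$/$L^p$-compactness (part~6) and the single-multiplicity claim in part~3 are inherited essentially verbatim, while the genuinely new second-order content — the extra term $F(\sigmas)$ in the renormalised energy $\Wad$ — is extracted through the lifted energy $\Ge(\cdot;\hex)$ of Lemma~\ref{lifting} and Theorem~\ref{Gepsilon.theorem}. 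The first step is a short absorption estimate. Using $\||u_\varepsilon|^2-1\|_{L^1(\Omega)}\le |\Omega| + \int_\Omega(1-|u_\varepsilon|^2)^2\,dx$ together with the large coefficient $\eta^{-2}$ in front of the penalty, one absorbs the Zeeman term to get $\int_\Omega(1-|u_\varepsilon|^2)^2\,dx \le C\eta^2 |\log\varepsilon| = o(1)$ in the regime $|\log\varepsilon|\ll |\log\eta|$; hence $|u_\varepsilon|\to1$ in $L^2(\Omega)$ and $\big|\int_\Omega \hex\cdot u_\varepsilon\,dx\big| = O(1)$. Consequently $\tfrac1{|\log\varepsilon|}\eee(u_\varepsilon;0) = \tfrac1{|\log\varepsilon|}\eee(u_\varepsilon;\hex) + o(1)$ stays bounded, and — under the sharper bound \eqref{sharper.bound} — $\eee(u_\varepsilon;0) - \pi\sum_j|d_j|\,|\log\varepsilon| = O(1)$ as well.

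With these two observations, parts~1, 2, 4, 5 and~6 follow by applying the corresponding parts of Theorem~18 in \cite{ignatk} to $\eee(\cdot;0)$: none of the exchange energy, the boundary penalty, the Jacobian, or the behaviour on the $\varepsilon$- and $\eta$-length scales is affected by the field, and the field contributes only $o(|\log\varepsilon|)$ at first order and $O(1)$ at second order. In particular one obtains the structure $J = -\varkappa\mathcal H^1\llcorner\partial\Omega + \pi\sum_j d_j\delta_{a_j}$ with $\sum_j d_j=2$, the trace convergence $u_\varepsilon|_{\partial\Omega}\to e^{i\phi}$, the first-order bound $\liminf\tfrac1{|\log\varepsilon|}\eee(u_\varepsilon;\hex)\ge \pi\sum_j|d_j|$ and, under \eqref{sharper.bound}, $d_j=\pm1$ for all $j$; for part~4 one simply adds the $O(1)$ quantity $-\int_\Omega\hex\cdot u_\varepsilon\,dx$ to the field-free penalty bound.

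The heart of the argument is the second-order lower bound in part~3. Following the lifting reduction in the proof of Theorem~18 of \cite{ignatk} (cf.\ \cite{ignatk2}), under \eqref{sharper.bound} the interior Jacobian satisfies $\text{jac}(u_\varepsilon)\to 0$ (an interior vortex would cost $\sim \pi|\log\eta|\gg \pi|\log\varepsilon|$), so away from a set of vanishing measure $u_\varepsilon/|u_\varepsilon|$ admits a lifting $\phi_\varepsilon\in H^1$ with $\int_\Omega |\nabla u_\varepsilon|^2\,dx + \tfrac1{2\pi\varepsilon}\int_{\partial\Omega}(u_\varepsilon\cdot\nu)^2\,d\mathcal H^1 \ge \int_\Omega|\nabla\phi_\varepsilon|^2\,dx + \tfrac1{2\pi\varepsilon}\int_{\partial\Omega}\sin^2(\phi_\varepsilon-g)\,d\mathcal H^1 - o(1)$. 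Combining this with the elementary comparison $\int_\Omega \hex\cdot u_\varepsilon\,dx = \int_\Omega \hex\cdot e^{i\phi_\varepsilon}\,dx + O\big(\||u_\varepsilon|-1\|_{L^1(\Omega)}\big) = \int_\Omega \hex\cdot e^{i\phi_\varepsilon}\,dx + o(1)$ yields $\eee(u_\varepsilon;\hex)\ge \Ge(\phi_\varepsilon;\hex) - o(1)$. Now $\phi_\varepsilon$ falls into the setting of Theorem~\ref{Gepsilon.theorem}: the bound $\tfrac1{|\log\varepsilon|}\Ge(\phi_\varepsilon;\hex)\le C$ puts it in part~1 (its trace converging to a lifting $\phi_0$ of $\pm\tau$ with $\partial_\tau\phi_0 = \varkappa - \pi\sum_j d_j\delta_{a_j}$, which matches the trace limit of part~1 of the theorem being proved), and $\Ge(\phi_\varepsilon;\hex)-\pi N|\log\varepsilon|=O(1)$ gives the hypothesis of part~2, which then delivers $\liminf\big(\Ge(\phi_\varepsilon;\hex) - \pi N|\log\varepsilon|\big)\ge \Wad + \gamma_0 N$, whence the claimed bound. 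I expect this lifting reduction to be the main obstacle: one must ensure that $u_\varepsilon/|u_\varepsilon|$ is globally liftable in spite of possible zeros of $u_\varepsilon$ and that the comparison has genuinely $o(1)$ error near the boundary vortices, where $|\nabla\phi_\varepsilon|$ blows up; this is exactly where one localises on the disks $B_\rho(a_j)$ using the local lower bound of part~5 and invokes the $\eta$-scale analysis of \cite{ignatk2}.

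Finally, for the upper bound in part~7 I would take the recovery family $\phihe\in H^1(\Omega;\R)$ furnished by part~3 of Theorem~\ref{Gepsilon.theorem} and set $u_\varepsilon := e^{i\phihe}$. Lemma~\ref{lifting} gives $\eee(u_\varepsilon;\hex) = \Ge(\phihe;\hex)$, so the energy asymptotics $\tfrac1{|\log\varepsilon|}\eee(u_\varepsilon;\hex)\to \pi\sum_j|d_j|$, and, when $|d_j|=1$ for all $j$, $\eee(u_\varepsilon;\hex) - \pi N|\log\varepsilon|\to \Wad + \gamma_0 N$, transfer at once; moreover $\phihe\to \sigmas + \phi_*$ in $L^p(\Omega)$ and $\phihe\to\phi_0$ in $L^p(\partial\Omega)$, so $u_\varepsilon$ converges strongly in $L^p(\Omega)$ and $L^p(\partial\Omega)$ to $e^{i(\sigmas+\phi_*)}$ (the canonical harmonic map associated to $\{(a_j,d_j)\}$ in the presence of the field), and since $u_\varepsilon$ is $\Ss^1$-valued with a lifting its global Jacobian is supported on $\partial\Omega$ and equals $-\partial_\tau\phihe\,\mathcal H^1\llcorner\partial\Omega$, which converges to $J = -\varkappa\mathcal H^1\llcorner\partial\Omega + \pi\sum_j d_j\delta_{a_j}$ because $\partial_\tau\phihe\to\partial_\tau\phi_0$.
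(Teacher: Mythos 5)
Your proposal is correct and follows essentially the same route as the paper: control the Zeeman term through the penalty term (your integral absorption estimate plays the role of the paper's elementary pointwise lemma) so that parts 1, 2, 4, 5, 6 and the single-multiplicity claim are inherited from Theorem 18 of \cite{ignatk}, then obtain the second-order lower bound by passing to an $\Ss^1$-valued approximation, lifting, and invoking Theorem \ref{Gepsilon.theorem}, and obtain the matching upper bound from the recovery family $\phihe$ via $u_\varepsilon=e^{i\phihe}$. The lifting/approximation step you flag as the main obstacle is handled in the paper exactly as you anticipate, by citing the $U_\varepsilon$-construction of \cite{ignatk2} with $L^p$-closeness, energy closeness, and Jacobian closeness, so no genuinely new argument is needed there.
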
 
Note: As defined in \cite{ignatk}, a \textit{canonical harmonic map} here means an $\Ss^1$ valued smooth harmonic map $m_*=e^{i\phi_*}$ (where $\phi_*$ is harmonic in $\Omega$) that is tangent on the boundary of the sample $\partial\Omega$ except at $N$ boundary vortices $a_j\in\partial\Omega$, where $m_*$ winds at each vortex $a_j$ according to the multiplicity $d_j$ for $j=1,2,...,N$. 
\subsection{Other thin-film regimes}
The variational problem we study here depends on many parameters like $\alpha$, $\eta$, and $\varepsilon$, which makes the problem rich as it can be studied in different regimes and we always assume $\alpha\ll 1$ as we only consider thin-film regimes. We already mentioned the regime we are working on \eqref{regime}, which was studied by Ignat and Kurzke \cite{ignatk} in the absence of an external field. They proved the concentration of the energy around boundary vortices by a $\Gamma$-convergence expansion of the three dimensional energy at the second order. Baffetti \cite{marco} also worked in the same regime and studied the micromagnetic properties of rectangular thin-films and showed that the S-state magnetizations have minimal energy.

The first rigorous analysis of a micromagnetic thin-film limit was done by Gioia and James \cite{gioia} as they studied this problem when $\eta$ is fixed (see also \cite{Kreisbeck}). They showed that the $\Gamma$-limit is minimized by all constant in-plane magnetizations and that these magnetizations do not depend on the shape of the film. Kohn and Slastikov \cite{kohn} studied the problem for relatively small soft thin-films when $\eta^2 \gg \alpha|\log\alpha|$. In this regime, the exchange energy becomes the dominant term in the micromagnetic energy and the non-local magnetostatic energy term reduces to the local term $\int_{\partial\Omega}(m\cdot\nu)^2 d\mathcal{H}^1$. Moreover, the minimizing magnetization $m$ is a constant unit-length vector, independent of the thickness of the film, and has no out-of-plane component. They also studied slightly larger films where $\eta^2 \sim  \alpha|\log\alpha|$ and showed that the rescaled micromagnetic energy $\Gamma$-converges to the energy
\begin{equation*}
    E_{KS}^\beta := \beta \int_{\Omega} |\nabla m|^2 \,dx + \frac{1}{2\pi} \int_{\partial\Omega} (m\cdot\nu)^2 \, d\mathcal{H}^1 \ , \ \ m\in H^1(\Omega;\mathbb{S}^1) ,
\end{equation*}
where $\beta= \lim_{\alpha\rightarrow 0} \frac{\eta^2}{\alpha|\log\alpha|}$. In this case, the limiting magnetization is a unit in-plane vector as before, but not necessarily a constant and the non-local term still reduces to the local term $\int_{\partial\Omega}(m\cdot\nu)^2 d\mathcal{H}^1$ times a constant. Kurzke \cite{kurzke} studied the behaviour of $\frac{1}{\beta}E_{KS}^\beta$ when $\beta \rightarrow 0$ and showed that there is no magnetization $m$ in $H^1(\Omega;\mathbb{S}^1)$ which satisfies $m\cdot\nu = 0$ on the boundary of the sample $\partial\Omega$ when $\Omega$ is simply connected. Therefore, the boundary term of $\frac{1}{\beta}E_{KS}^\beta$ cannot be made zero, so we obtain the formation of boundary vortices, where the magnetization quickly changes from $m=\tau$ to $m=-\tau$ over a segment of $\partial\Omega$ of order $O(\beta)$ (see also \cite{kurzke1, gradientflow}).

Moser \cite{moser} studied the regime $\eta^2 = O(\alpha)$ and showed that both the exchange and the magnetostatic terms are present in the energy limit in this regime, and boundary vortices will appear (see also \cite{moser2, moser3}). Ignat and Otto \cite{otto} studied the regime $\eta^2 \ll \alpha$, which corresponds to large thin-films and showed that the stable state magnetizations in this regime consist of magnetic domains separated by Néel walls, and an interior vortex or two boundary vortices appear in this regime.  Ignat and Kn{\"u}pfer \cite{knupfer} studied the magnetization pattern for thin ferromagnetic films with circular cross-section, i.e. $\Omega=B_\ell \times (0,t)$ where $B_\ell$ is the disk in $\R^2$ with radius $\ell$ and $t$ is the thickness of the film. They studied the problem in the regime
\begin{equation*}
 \frac{\alpha}{|\log\alpha|} \ll \eta^2 \left|\log\frac{\eta^2}{\alpha}\right|  \ll \frac{\alpha}{\log|\log\alpha|}
\end{equation*}
and proved that the vortex structure in this regime is driven by a $360^{\circ}$–N\'{e}el wall with two boundary vortices.

DeSimone et al. \cite{simone, recent} studied the regime $\eta^2 \ll \frac{\alpha}{|\log\alpha|}$ which corresponds to very large thin-films. In this case, the magnetostatic energy will be the dominant term in the micromagnetic energy and the contribution of the exchange term disappears. L'Official \cite{francois} studied boundary vortices in a thin-film regime where the micromagnetic energy has an additional term called the Dzyaloshinskii-Moriya interaction term (see also \cite{ignat+Lofficial}). Di Fratta, Muratov and Slastikov \cite{fratta} studied the problem when the energy has a perpendicular anisotropy term and a Dzyaloshinskii-Moriya interaction term and they worked in four different asymptotic regimes.

\subsection*{Outline of the paper}  In the next section, we first prove Lemma \ref{renormalized_lemma} that relates the new definition of the renormalized energy in the presence of an external field with the energies of the form of $G$ given by \eqref{Genergy}. Then we prove the compactness and $\Gamma$-convergence results for the energy $\Ge$ (Theorem \ref{Gepsilon.theorem}). In section 4, we prove Theorem \ref{epsilon.eta.energy} that has compactness and $\Gamma$-convergence results for the energy $\eee$. Then, we prove Theorem \ref{rescaled+reduced} that connects the 3D energy with the 2D one. Finally, we prove the compactness and $\Gamma$-convergence results for the energy $E_\alpha$ (Theorem \ref{rescaled.energy.compact}). In the last section, we illustrate the magnetization vector field behavior in the unit disk and an oval-shaped domain and show the effect of adding different external fields on the magnetization vector field and the boundary vortices' locations in these domains. 

\subsection*{Acknowledgment} 
This paper presents work from the first author's PhD thesis, and she would like to thank the School of Mathematical Sciences at the University of Nottingham for supporting her studies. 

\section{Gamma-convergence for \texorpdfstring{$\Ss^1$-}{S1-}valued magnetizations (Theorem \ref{Gepsilon.theorem})}
In this section, we mainly prove Theorem \ref{Gepsilon.theorem}, but before that, let us prove Lemma \ref{renormalized_lemma}. 
\begin{proof} (Of Lemma \ref{renormalized_lemma})
We defined the renormalized energy as  
\begin{equation*}
    \Wad = W_{\Omega;0}(\{ (a_j,d_j)\}) + F(\sigmas),
\end{equation*}
where $ W_{\Omega;0}(\{ (a_j,d_j)\})$ is the unperturbed renormalized energy and $\sigmas$ is any minimizer of  
\begin{equation*}
    F(\xi) = \int_\Omega \left(|\nabla \xi|^2   - h_{ext}\cdot e^{i(\xi+\phi_*)}\right) \, dx,
\end{equation*}
where $\phi_*$ is the solution of 
\begin{equation}\label{kappas_equation}
  \begin{cases}
  \Delta\phi_*  = 0 \ &\text{in} \ \Omega,\\
    \phi_*  = \phib \  & \text{on} \ \partial\Omega.  
  \end{cases} 
\end{equation}
In this lemma, we want to prove that for $\psi\in H^{1}_{loc}(\bar{\Omega}\setminus \{a_1,...,a_N\})$ where $\psi=\phib$ on $\partial\Omega$ (where $\phib$ is the one mentioned in Definition \ref{unperturbed.renormalized.def} ), the following inequality holds:
\begin{equation*}
    G(\psi) = \liminf_{\rho \to 0}\left(  \int_{\Omega\backslash \cup_{j=1}^{N} B_\rho(a_j)}|\nabla \psi|^2 \,dx   - \pi N \log \frac{1}{\rho}\right) - \int_{\Omega} h_{ext}\cdot e^{i\psi} \, dx \geq \Wad.
\end{equation*}

To do that, we write $\psi $ as the sum $\psi=\phi_*+\sigma$, where $\phi_*$ is defined above.
Then, the first term of $G(\psi)$ is
\begin{equation*}
    \int_{\Omega_\rho}|\nabla \psi|^2 \,dx =    \int_{\Omega_\rho}|\nabla \phi_*|^2 \,dx +   \int_{\Omega_\rho}|\nabla \sigma|^2 \,dx +2   \int_{\Omega_\rho}\nabla \phi_* \cdot \nabla\sigma \,dx, 
\end{equation*}
where $\Omega_\rho = \Omega\backslash \cup_{j=1}^{N} B_\rho(a_j)$. By Green's formula, the last term above is 
\begin{align*}
    2   \int_{\Omega_\rho}\nabla \phi_* \cdot \nabla\sigma \,dx &= -2  \int_{\Omega_\rho} (\Delta\phi_*) \sigma \, dx + 2  \int_{\partial\Omega_\rho} \frac{\partial\phi_*}{\partial\nu} \sigma \, dS, \\
    &= 2  \int_{\partial\Omega_\rho} \frac{\partial\phi_*}{\partial\nu} \sigma \, dS , \ \ \text{(since $\Delta\phi_*  = 0$ in $\Omega$)}.
\end{align*}
But $\sigma=0$ on $\partial\Omega$ (since $\psi=\phi_*=\phib$ on $\partial\Omega$), so
\begin{equation*}
  2  \int_{\partial\Omega_\rho} \frac{\partial\phi_*}{\partial\nu} \sigma \, dS = 2  \int_{\Omega\cap\cup_{j=1}^{N}\partial  B_\rho(a_j)} \frac{\partial\phi_*}{\partial\nu} \sigma  \, dS
\end{equation*}
Now, if we show that:
\begin{equation*}
    \int_{\Omega\cap\cup_{j=1}^{N}\partial  B_\rho(a_j)} \left|\sigma\right|^2  \, dS\leq C_1 \rho   \int_{\Omega_\rho} |\nabla\sigma|^2 \, dx,
\end{equation*}
and if we show that  $\int_{\Omega\cap\cup_{j=1}^{N}\partial  B_\rho(a_j)} \left|\frac{\partial\phi_*}{\partial\nu}\right|^2 \, dS=O(\rho (\log \rho)^2)$, then using these results and H\"{o}lder's inequality, we get 
\begin{align*}
   \left|   2  \int_{\Omega\cap\cup_{j=1}^{N}\partial  B_\rho(a_j)} \frac{\partial\phi_*}{\partial\nu} \sigma \, dS  \right|& \leq   2\left(  \int_{\Omega\cap\cup_{j=1}^{N}\partial  B_\rho(a_j)} \left|\frac{\partial\phi_*}{\partial\nu}\right|^2 \, dS\right) ^{1/2} \left(  \int_{\Omega\cap\cup_{j=1}^{N}\partial  B_\rho(a_j)} \left|\sigma\right|^2 \, dS\right) ^{1/2}, \\
      & \leq 2 \left(C\rho^{1/2} \log \rho\right) \left(C_1 \rho \int_{\Omega_\rho} |\nabla\sigma|^2 \, dx\right)^{1/2} .
\end{align*}
Therefore, as $\rho$ goes to zero, we get 
\begin{equation*}
    \lim_{\rho \to 0}{\left|2  \int_{\Omega\cap\cup_{j=1}^{N}\partial  B_\rho(a_j)} \frac{\partial\phi_*}{\partial\nu} \sigma \, dS\right|} =0.
\end{equation*}
This implies that the limit of the first term of $G(\psi)$ is
\begin{equation*}
   \lim_{\rho\to 0} \int_{\Omega_\rho}|\nabla \psi|^2 \,dx =     \lim_{\rho\to 0} \int_{\Omega_\rho}|\nabla \phi_*|^2 \,dx +    \lim_{\rho\to 0} \int_{\Omega_\rho}|\nabla \sigma|^2 \,dx. 
\end{equation*}
Hence, 
\begin{align*}
    G(\psi)=& \liminf_{\rho \to 0}\left(\int_{\Omega_\rho}|\nabla \psi|^2 \,dx   - \pi N \log \frac{1}{\rho}\right) - \int_{\Omega} h_{ext}\cdot e^{i\psi} \, dx \\
     = &  \liminf_{\rho \to 0} \left( \int_{\Omega_\rho}|\nabla \phi_*|^2 \,dx +   \int_{\Omega_\rho}|\nabla \sigma|^2 \,dx - \pi N \log \frac{1}{\rho}\right) - \int_{\Omega} h_{ext}\cdot e^{i(\phi_*+\sigma)} \, dx,\\
     =&  \liminf_{\rho \to 0} \left( \int_{\Omega_\rho}|\nabla \phi_*|^2 \,dx - \pi N \log \frac{1}{\rho}\right) +  \liminf_{\rho \to 0}\int_{\Omega_\rho}|\nabla \sigma|^2 \,dx - \int_{\Omega} h_{ext}\cdot e^{i(\phi_*+\sigma)} \, dx,\\
     = &  W_{\Omega;0}(\{ (a_j,d_j)\})+\liminf_{\rho \to 0}\int_{\Omega_\rho}|\nabla \sigma|^2 \,dx - \int_{\Omega} h_{ext}\cdot e^{i(\phi_*+\sigma)} \, dx,\\
     \geq &  W_{\Omega;0}(\{ (a_j,d_j)\}) + F(\sigmas) = \Wad
\end{align*}
Note that in the last two steps above, we used that $\liminf_{\rho \to 0}\int_{\Omega_\rho}|\nabla \sigma|^2 \, dx = \int_{\Omega}|\nabla \sigma|^2 \,dx$. This is true here because:  $\int_{\Omega_\rho}|\nabla\sigma|^2\leq G(\psi)+C$ for some constant $C$ (note that $G(\psi)$ is bounded, since otherwise there is nothing to prove in this lemma).
Hence, $\int_{\Omega_\rho}|\nabla\sigma|^2\leq C'$ for some $C'\in \R$, and by the monotone convergence theorem, we get $\lim_{\rho \to 0}\int_{\Omega_\rho}|\nabla \sigma|^2 \, dx = \int_{\Omega}|\nabla \sigma|^2 \,dx$.


To finish the proof, we need to prove the following two claims.

Claim 1:
Let $\phi_*$ be the solution of 
\begin{equation}\label{claimpde}
  \begin{cases}
  \Delta\phi_*  = 0 \ &\text{in} \ \Omega,\\
    \phi_*  = \phib \  & \text{on} \ \partial\Omega.  
  \end{cases} 
\end{equation}
Then $\int_{\Omega\cap\cup_{j=1}^{N}\partial  B_\rho(a_j)} \left|\frac{\partial\phi_*}{\partial\nu}\right|^2 \, dS=O(\rho \left(\log \rho\right)^2)$. 
\begin{proof}

  Let $\Phi:\bar{\omega}\to \bar{\Omega}$ be a $C^1$ conformal diffeomorphism between two simply connected domains $\omega , \Omega\subset\R^2$, and let $\Psi$ be its inverse. Let us denote the solution of equation \eqref{claimpde} in $\R_+^2$ by $\Tilde{\phi}_{*}$. Then $\Tilde{\phi}_*(z)=\sum_{j=1}^{N} d_j \text{Arg}(z-a_j)$ for $z\in \R_+^2$ is a solution of \eqref{claimpde} in $\R_+^2$ up to an additive constant in $\pi\mathbb{Z}$.
Hence, $\phi_*$ in $\Omega$ can be written as 
    \begin{equation*}
        \phi_*(w)=\Tilde{\phi}_*(\Psi(w)) +\Theta(\Psi(w)), \ \ \forall w \in \Omega,
    \end{equation*}
where $\Theta$ is a smooth harmonic function from $\omega$ to $\R$ such that $e^{i\Theta(z)}= \Phi'(z)/|\Phi'(z)|$ for all $z\in \omega$.

We want to prove the claim in a bounded simply connected $C^{1,1}$ domain $\Omega$. To find an upper bound for the gradient of $u(z)=\phi_*(z)- \sum_{j=1}^{N} d_j \text{Arg}(z-a_j)$ for all $z$ in $\Omega$, we use Theorem 3 in \cite{Hile}. 
\begin{theorem}\label{Theorem3Hile} (Theorem 3 in \cite{Hile} ) Let $\Omega$ be a bounded Dini-smooth Jordan domain in the plane, 
let $\phi:\partial\Omega\to \R$ be Lipschitz continuous with Lipschitz constant $M$, and let 
$u$ solve the Dirichlet problem $u \in C(\bar{\Omega})$, $\Delta u=0$ in $\Omega$, $u = \phi$ on $\partial\Omega$. Then there exists a positive constant $C(\Omega)$ such that, for all $z$ in $\Omega$,
\begin{equation}
    |\nabla u(z)| \leq C(\Omega) M \log \frac{d(\Omega)}{d(z,\partial\Omega)},
\end{equation}
    where $d(\Omega)$ is the diameter of $\Omega$ and $d(z,\partial\Omega)$ is the distance from $z$ to $\partial\Omega$.
\end{theorem}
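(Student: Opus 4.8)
The plan is to reduce the inequality to the unit disk by a conformal change of variables and then read off the logarithmic growth directly from the Poisson integral. First I would use the boundary regularity of the Riemann map for Dini-smooth Jordan domains (the Kellogg--Warschawski theorem): since $\Omega$ is Dini-smooth, a conformal map $\Phi\colon \mathbb{D}\to\Omega$ (with $\mathbb{D}$ the unit disk) and its inverse $\Psi=\Phi^{-1}$ extend to $C^1$ diffeomorphisms of the closures, with $|\Phi'|$ and $|\Psi'|$ bounded above and below by positive constants depending only on $\Omega$. In two dimensions conformality preserves harmonicity, so $v:=u\circ\Phi$ is harmonic in $\mathbb{D}$, and its boundary datum $\phi\circ\Phi|_{\partial\mathbb{D}}$ is Lipschitz with constant at most $C(\Omega)M$. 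The two-sided bound on $|\Psi'|$ makes the boundary distances $d(w,\partial\Omega)$ and $d(\Psi(w),\partial\mathbb{D})$ comparable and gives $|\nabla u(w)|\le |\Psi'(w)|\,|\nabla v(\Psi(w))|$; hence it suffices to establish $|\nabla v(z)|\le C M\log\frac{1}{d(z,\partial\mathbb{D})}$ on the disk and then transplant.

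On the disk I would write $v$ through the Poisson kernel $P(z,e^{i\theta})=\frac{1-|z|^2}{|e^{i\theta}-z|^2}$ and exploit that $\int_0^{2\pi}\nabla_z P(z,e^{i\theta})\,d\theta=0$, since the harmonic extension of a constant is constant. Fixing the boundary point $e^{i\theta_0}$ nearest to $z$, I subtract $\phi(e^{i\theta_0})$ to obtain
\[
\nabla v(z)=\frac{1}{2\pi}\int_0^{2\pi}\nabla_z P(z,e^{i\theta})\,\big[\phi(e^{i\theta})-\phi(e^{i\theta_0})\big]\,d\theta .
\]
Writing $\delta:=1-|z|=d(z,\partial\mathbb{D})$ and $t:=\theta-\theta_0$, elementary estimates give that $|e^{i\theta}-z|^2$ is comparable to $\delta^2+t^2$ and that $|\nabla_z P|\le C\big(\frac{\delta}{(\delta^2+t^2)^{3/2}}+\frac{1}{\delta^2+t^2}\big)$, while the Lipschitz bound yields $|\phi(e^{i\theta})-\phi(e^{i\theta_0})|\le CM|t|$. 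Everything then reduces to two scalar integrals: $\int_0^\pi \frac{\delta\,t}{(\delta^2+t^2)^{3/2}}\,dt$, which is $O(1)$, and $\int_0^\pi \frac{t}{\delta^2+t^2}\,dt=\tfrac12\log\frac{\delta^2+\pi^2}{\delta^2}$, which contributes exactly the factor $\log\frac{1}{\delta}$. Combining these gives $|\nabla v(z)|\le CM\log\frac{C}{\delta}$, and transporting through $\Phi$ produces the asserted bound on $\Omega$.

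I expect the main obstacle to be the conformal boundary regularity rather than the disk computation, which is routine. Dini-smoothness is precisely the borderline hypothesis ensuring that $\Phi'$ extends continuously and without zeros to $\overline{\mathbb{D}}$, and it is exactly this that legitimizes both the comparability $d(w,\partial\Omega)\simeq d(\Psi(w),\partial\mathbb{D})$ and the transfer of the Lipschitz constant; some care is needed to check that the angular parameter $t=\theta-\theta_0$ genuinely measures boundary distance up to constants near $z$. A map-free alternative would work directly with the Poisson kernel of $\Omega$ via Green's-function estimates for Dini domains, but the conformal reduction concentrates all the analysis into the single explicit computation above and is the cleaner route.
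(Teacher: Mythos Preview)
The paper does not prove this theorem at all: it is quoted verbatim from \cite{Hile} and then applied as a black box to bound $|\nabla u|$ in the proof of Claim~1 within Lemma~\ref{renormalized_lemma}. There is therefore no proof in the paper to compare your argument against.

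That said, your proposed route is sound. The Kellogg--Warschawski theorem does give the $C^1$ extension of the Riemann map with nonvanishing derivative on $\overline{\mathbb{D}}$ under the Dini-smoothness hypothesis, which is exactly what you need for the bilipschitz transfer of both the boundary Lipschitz constant and the distance to the boundary. The Poisson-kernel computation on the disk is correct: subtracting $\phi(e^{i\theta_0})$ is the right trick to make the integrand vanish to first order at the nearest boundary point, the pointwise bound $|\nabla_z P|\lesssim \delta(\delta^2+t^2)^{-3/2}+(\delta^2+t^2)^{-1}$ is accurate, and the two resulting one-dimensional integrals give an $O(1)$ term and the logarithm respectively. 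The only place to be slightly careful is the comparability $|e^{i\theta}-z|^2\asymp \delta^2+t^2$, which holds uniformly for $|t|\le\pi$ once you note $|e^{i\theta}-z|\ge |e^{i\theta}-e^{i\theta_0}|-\delta$ and $|e^{i\theta}-e^{i\theta_0}|=2|\sin(t/2)|\asymp|t|$; this is routine. Your argument would constitute a valid self-contained proof of the cited result.
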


Now, since $\Omega$ is a bounded, simply connected,  $C^{1,1}$ domain, it is a bounded Dini-smooth Jordan domain. Moreover, the function $u(z)=\phi_*(z)- \sum_{j=1}^{N} d_j \text{Arg}(z-a_j)$ satisfies the assumptions of Theorem \eqref{Theorem3Hile} , i.e. $\Delta u =0$ in $\Omega$ 
and $u$ is Lipschitz continuous on the boundary.
Therefore, using Theorem \eqref{Theorem3Hile} we get the following bound:
\begin{equation}\label{gradientbound}
    |\nabla u(z)| \leq C(\Omega) M \log \frac{d(\Omega)}{d(z,\partial\Omega)}, \ \ \ \forall z\in \Omega.
\end{equation}
\begin{align}
 \int_{\Omega\cap\cup_{j=1}^{N}\partial  B_\rho(a_j)} \left|\frac{\partial \phi_*}{\partial\nu}\right|^2 \, dS   &= \sum_{j=1}^N\int_{\Omega\cap\partial  B_\rho(a_j)} \left|\frac{\partial \phi_*}{\partial\nu}\right|^2 \, dS , \nonumber\\
 &=\sum_{j=1}^N\int_{\Omega\cap\partial  B_\rho(a_j)} \left|\frac{\partial u(z)}{\partial\nu} + d_j \frac{\partial }{\partial\nu}\text{Arg}(z-a_j) + \sum_{k\neq j}
 d_k  \frac{\partial }{\partial\nu} \text{Arg}(z-a_k)\right|^2 \, dS , \nonumber\\
  &=\sum_{j=1}^N\int_{\Omega\cap\partial  B_\rho(a_j)} \left|\frac{\partial u(z)}{\partial\nu} + \sum_{k\neq j}
 d_k  \frac{\partial }{\partial\nu} \text{Arg}(z-a_k)\right|^2 \, dS , \nonumber\\
 &\leq \sum_{j=1}^N\int_{\Omega\cap\partial  B_\rho(a_j)} \left( 2 \left|\frac{\partial u(z)}{\partial\nu}\right|^2 + 2\left|\sum_{k\neq j}
 d_k  \frac{\partial }{\partial\nu} \text{Arg}(z-a_k)\right|^2  \right), \label{normal.der}
 \end{align}
where in the third line above we used that for $z\in \Omega\cap\partial  B_\rho(a_j)$, the normal derivative of the Argument function is zero, i.e. $\nu.\nabla \text{Arg}(z-a_j)=0$ on $\partial  B_\rho(a_j)$. In addition, in the last line above, we used the following inequality: $|a+b|^2\leq 2|a|^2 + 2|b|^2$.
 
Let us find a bound for each term in \eqref{normal.der}. For $z \in \Omega\cap\partial  B_\rho(a_j)$, the second term above is bounded since 
 \begin{align}
     \left| \frac{\partial }{\partial\nu} \text{Arg}(z-a_k)\right| &= \left| \nu \cdot\nabla \text{Arg}(z-a_k)\right|,\nonumber\\
     &\leq  \left| \nabla \text{Arg}(z-a_k)\right|, \ \ \  \text{(since $\nu$ is a unit vector)}, \nonumber\\
     &\leq \frac{1}{|z-a_k|}, 
 \end{align}
 but $|z-a_k|=|z-a_j+a_j-a_k| \geq |a_j-a_k|-\rho$ by the triangle inequality
 . Moreover, there is a constant, say $\delta_*>0$ such that $|a_j-a_k|-\rho\geq \delta_*$ 
 . Hence, we get that $\left| \frac{\partial }{\partial\nu} \text{Arg}(z-a_k)\right|$ is bounded. The boundedness of this directional derivative, together with Cauchy-Schwarz inequality, implies that the second term in \eqref{normal.der} is bounded.

Now, the first term in \eqref{normal.der} is bounded since
\begin{align}
\int_{\Omega\cap\partial  B_\rho(a_j)}  \left|\frac{\partial u(z)}{\partial\nu}\right|^2  &= \int_{\Omega\cap\partial  B_\rho(a_j)} \left|\nu \cdot \nabla u\right|^2 \, dS\nonumber\\
    &\leq\int_{\Omega\cap\partial  B_\rho(a_j)} \left|\nabla u\right|^2 \, dS,  \ \ \ \text{(since $\nu$ is a unit vector),} \nonumber\\
     &\leq C(\Omega) M^2 \int_{\Omega\cap\partial  B_\rho(a_j)} \left|\log \frac{d(\Omega)}{d(z,\partial\Omega)}\right|^2\, dS,  \ \ \ \text{(using inequality \eqref{gradientbound}).}\label{distance.integ}
     \end{align}
Note that $d(\Omega)$ is bounded since $\Omega$ is a bounded domain. Now, we want to find a lower bound for $d(z,\partial\Omega)$  
for $z\in \Omega\cap\partial  B_r(a_j)$ for some $j$. To do that, we write the point $z$ as $z=re^{i\theta}$ for some $\theta\in(\theta_{j1}, \theta_{j2})$, where $\theta_{j1}, \theta_{j2} \in[0,2\pi)$. The two points $z_1=re^{i\theta_{j1}}$ and $z_2=re^{i\theta_{j2}}$ are the intersection points between $\partial\Omega$ and $\partial  B_r(a_j)$. Let $\beta_1$, $\beta_2$, and $\beta_3$ be the three angles in the triangle with vertices $z=re^{i\theta}$, $z_1=re^{i\theta_{j1}}$
 and $p$ which is the projection of $z$ onto $\partial\Omega$. Then,
 \begin{equation*}
     \frac{\sin\beta_1}{d(re^{i\theta}, re^{i\theta_{j1}})}=\frac{\sin\beta_2}{d(re^{i\theta}, p)}=\frac{\sin\beta_3}{d(re^{i\theta_{j1}}, p)} .
 \end{equation*}
For sufficiently small $r<r_0$, where $0<r_0\in \R$, there exists constants $c_0,c_1\in \R$ such that $\frac{\pi}{2}-c_0 \leq \beta_1 \leq \frac{\pi}{2}+c_0 $ and $\frac{\pi}{4}-c_1 \leq\beta_2$ (if $\beta_2< \frac{\pi}{4}+c_0 $ then consider the triangle with vertices $re^{i\theta}$, $re^{i\theta_{j2}}$, and $p$). This implies that there is a constant $c_2\in \R$ such that
\begin{equation}\label{distance.ineq}
    d(z,\partial\Omega)= d(re^{i\theta},p)= \frac{\sin\beta_2}{\sin\beta_1}d(re^{i\theta}, re^{i\theta_{j1}})\geq c_2 d(re^{i\theta}, re^{i\theta_{j1}})=2c_2 r\left|\sin\frac{\theta-\theta_{j1}}{2}\right|.
\end{equation}
From equations \eqref{distance.integ} and \eqref{distance.ineq}, we get
\begin{align*}
 \int_{\Omega\cap\partial  B_\rho(a_j)} \left|\frac{\partial u}{\partial\nu}\right|^2 \, dS   &\leq C(\Omega)M^2\int_{\Omega\cap\partial  B_\rho(a_j)}  \left|\log \frac{d(\Omega)}{d(z,\partial\Omega)}\right|^2\, dS,\\
 & \leq C_1 \int_{\theta_{j1}}^{\pi/2}  \left|\log \frac{d(\Omega)}{2c_2 r\sin\frac{\theta-\theta_{j1}}{2}}\right|^2\,  r \, d\theta \\
 &+  C_2 \int_{\pi/2}^{\theta_{j2}}  \left|\log \frac{d(\Omega)}{2c_2 r\sin\frac{\theta_{j2}-\theta}{2}}\right|^2\,  r \, d\theta  .
\end{align*}
Each of the above integrals is similar to the following integral
\begin{equation*}
    I=\int_0^{\pi/2} \left(\log\frac{1}{r \sin\theta}\right)^2 \, r\, d\theta,
\end{equation*}
Therefore, it is enough to show that $I$ is bounded to obtain the result.
\begin{align}
   I&=   \int_0^{\pi/2}  \left(\log\frac{1}{r \sin\theta}\right)^2 \, r\, d\theta,  \nonumber\\
   & = \int_0^{\pi/2}  \left(\log r + \log\sin\theta\right)^2 \, r\, d\theta, \nonumber\\
&=  \frac{\pi}{2} r  \left(\log r \right)^2  + 2 r \log r \int_0^{\pi/2}\log \sin \theta \, d\theta +  r \int_0^{\pi/2} \left(\log \sin \theta\right)^2 \, d\theta\nonumber.  \label{polar.R2}
\end{align}
But $\int_0^{\pi/2}\log \sin \theta \, d\theta=-\frac{\pi}{2} \log 2$, and $\int_0^{\pi/2} \left(\log \sin \theta\right)^2 \, d\theta=\frac{\pi}{2}[(\log2)^2+ \frac{\pi^2}{12}]$ (these integrals can be found, for example, in Gradshteyn and Ryzhik \cite{Ryzhik} ). Hence, $I=O(r(\log r)^2)$, which ends the proof of the claim.


\end{proof}
Now, let us prove the second claim. 

Claim 2:  
\begin{equation*}
    \int_{\Omega\cap\cup_{j=1}^{N}\partial  B_\rho(a_j)} \left|\sigma\right|^2  \, dS\leq C_1 \rho   \int_{\Omega_\rho} |\nabla\sigma|^2 \, dx,
\end{equation*}
\begin{proof}
  First, we will rewrite and prove the claim for the half-space.

  Claim: let $\Omega =\R^2_+$, then there is a constant $C>0$ and $\rho_0>0$ such that for all $\rho<\rho_0$ and for all $u\in H^1(\R^2_+\cap(B_{2\rho}\setminus B_\rho))$, where $u=0$ on $\R \cap (B_{2\rho}\setminus B_\rho)$, the following inequality holds: 
 \begin{equation*}
    \int_{\R^2_+\cap \partial B_\rho} \left|u\right|^2  \, dS\leq C \rho   \int_{\R^2_+ \cap(B_{2\rho}\setminus B_\rho)} |\nabla u|^2 \, dx.
\end{equation*}
\begin{proof}
By rescaling, it is enough to prove it for $\rho=1$ as the proof is the same for any other $\rho$. Hence, let $\rho=1$ and by contradiction, assume there is no such $C\in \R$ where the claim holds. Then there is a sequence $u_n \in H^1(\R^2_+ \cap(B_{2}\setminus B_1))$ where $u_n=0$ on $\R\cap(B_{2}\setminus B_1)$ such that 
  \begin{equation*}
    \int_{\R^2_+\cap \partial B_1} \left|u_n\right|^2  \, dS > n   \int_{\R^2_+ \cap(B_{2}\setminus B_1)} |\nabla u_n|^2 \, dx.
\end{equation*}
Now, let $$\hat{u}_n= \frac{u_n}{\sqrt{  \int_{\R^2_+\cap \partial B_1} \left|u_n\right|^2  \, dS}},$$ then 
\begin{equation*}
    \int_{\R^2_+ \cap(B_{2}\setminus B_1)} |\nabla \hat{u}_n|^2 \, dx \leq \frac{1}{n},
\end{equation*}
which implies that $\nabla \hat{u}_n \rightarrow 0$ as $n \rightarrow\infty$ in $L^2(A)$, where $A=\R^2_+ \cap(B_{2}\setminus B_1)$. 

Now, set $a_n=\frac{1}{|A|}\int_A \hat{u}_n dx$. Then, by Poincaré inequality, we get $\|\hat{u}_n-a_n\|_{L^2(A)}\to0$, so $v_n:=\hat{u}_n-a_n \to 0$ in $H^1(A)$. 
By compact trace embedding, $v_n \to 0$ in $L^2(\partial A)$,  so $a_n\to 0$ (since $0=\int_{\partial A} |v_n|^2 = \int_{\partial A} |\hat{u}_n|^2+\int_{\partial A} |a_n|^2=0 +\int_{\partial A} |a_n|^2 $, so $\int_{\partial A} |a_n|^2=0$. But $a_n$ is a constant, so $a_n=0$).
 This implies that $\hat{u}_n \to 0$ in $H^1(A)$.


Then, 
\begin{equation*}
    1=\int_{\R^2_+\cap \partial B_1} \left|\hat{u}_n\right|^2 \, dS \to  0,
\end{equation*}
which is a contradiction. This ends the proof of the claim in $\R^2_+$.
\end{proof}

Next, we want to prove the previous claim when $\Omega$ is a $C^{1,1}$ domain. To do so, we need to prove the following.

Claim: Let $\Omega$ be a $C^{1,1}$ domain. Then there exist $C(\Omega)>0$ and  $\rho_0(\Omega)>0$  depending on the domain $\Omega$ such that for all $\rho<\rho_0(\Omega)$ there is a diffeomorphism 
\begin{equation*}
    \Psi_\rho : \R^2_+\cap(B_{2\rho}\setminus B_\rho) \to \Omega\cap(B_{2\rho}\setminus B_\rho), 
\end{equation*}
where the determinant of the Jacobian of $ \Psi_\rho$ and its inverse are bounded by $C(\Omega)$.
\begin{proof}
    Let us construct the diffeomorphism $\Psi_\rho$ that takes $(r,\theta) \in \R^2_+\cap(B_{2\rho}\setminus B_\rho)$ into $(s,t) \in \Omega\cap(B_{2\rho}\setminus B_\rho)$, where $s, r \in [r_0,2r_0]$ for some $r_0\in \R$, $\theta\in[0,\pi]$ and $t\in [t_1(r),t_2(r)]$, by mapping $(r,\theta)$ into $(s,t)= (r, t_1(r)+(t_2(r)-t_1(r))\frac{\theta}{\pi})$. 
    
    Calculating the Jacobian of this transformation, we get:
\begin{equation*}
    s_r=1, \ \ s_\theta=0, \ \ t_r= t_1'(r)+(t_2'(r)-t_1'(r))\frac{\theta}{\pi}, \ \ \text{and} \ \ t_\theta= \frac{t_2(r)-t_1(r)}{\pi}.
\end{equation*}
The determinant of the Jacobian matrix of $\Psi_\rho$ is $\frac{t_2(r)-t_1(r)}{\pi}$, which will be in the interval $[\frac{1}{2},\frac{3}{2}]$ 
if the balls $B_\rho$ are small enough. Hence, it is bounded by a constant, say $C(\Omega)>0$. Moreover, the determinat of the Jacobian inverse will be bounded since it is equal to $\frac{\pi}{t_2(r)-t_1(r)}\in[\frac{2}{3},2]$. 

\end{proof}
Now, let us prove the inequality when $\Omega$ is a $C^{1,1}$ domain. 

Claim: let $\Omega$ be a $C^{1,1}$ domain, then there is a constant $C>0$ and $\rho_0>0$ such that for all $\rho<\rho_0$ and for all $u\in H^1(\Omega\cap(B_{2\rho}\setminus B_\rho))$, where $u=0$ on $\partial\Omega \cap (B_{2\rho}\setminus B_\rho)$, the following inequality holds: 
 \begin{equation*}
    \int_{\Omega\cap \partial B_\rho} \left|u\right|^2  \, dS\leq C \rho   \int_{\Omega \cap(B_{2\rho}\setminus B_\rho)} |\nabla u|^2 \, dx.
\end{equation*}
\begin{proof}
    Using the diffeomorphism constructed above, let $v=u \circ \Psi_\rho$. Then, as $u=v \circ \Psi_\rho^{-1}$,  we can write the LHS of the above inequality as
     \begin{align*}
           \int_{\Omega\cap \partial B_\rho} \left|u\right|^2  \, dS &=  \int_{\Omega\cap \partial B_\rho} \left|v\circ \Psi_\rho^{-1}\right|^2 \, dS,\\
           & = \int_{\R^2_+\cap \partial B_\rho} \left|v\right|^2 \, dS,\ \ \ (\text{by a change of variables}) 
           \\
           &\leq  \, C \rho   \int_{\R_+^2 \cap(B_{2\rho}\setminus B_\rho)} |\nabla v|^2 \, dx,\ \ \ (\text{from the claim in the half space})  \\
           &=C \rho   \int_{\Omega \cap(B_{2\rho}\setminus B_\rho)} |\nabla v\circ \Psi_\rho^{-1}|^2 \, dx, \ \ \ (\text{by another change of variables})\\
           &=C \rho   \int_{\Omega \cap(B_{2\rho}\setminus B_\rho)} |D\Psi_\rho \nabla u |^2 \, dx,  \ \ \ \text{ (since $\nabla u=  D \Psi_\rho^{-1}\left(\nabla v \circ \Psi_\rho^{-1}\right) $)}\\
           &\leq C' \rho   \int_{\Omega \cap(B_{2\rho}\setminus B_\rho)} | \nabla u |^2 \, dx,  \ \ \ \text{(since $|D\Psi_\rho|$ is bounded)}
               \end{align*}
    where $\nabla v\circ \Psi_\rho^{-1}$ is the gradient of $v$ evaluated at $\Psi_\rho^{-1}$, and $ D \Psi_\rho^{-1}$ is the Jacobian matrix of $\Psi_\rho^{-1}$.  
\end{proof}
\end{proof}
\end{proof}

After proving the above lemma, we are ready to prove Theorem \ref{Gepsilon.theorem}. As mentioned before, this theorem is similar to Theorem 4.2 in \cite{ignatk2}, but here we have an external field term added to the energy. Therefore, in some steps of the following proof, we refer to Theorem 4.2 in \cite{ignatk2}.
\begin{proof}
\begin{enumerate}
\item    To prove the first part of this theorem, let $\left(\phie\right)_{\varepsilon}$ be a sequence/ family in $H^{1}(\Omega; \R)$ such that
\begin{equation*}
    \limsup _{\varepsilon \rightarrow 0} \frac{1}{|\log \varepsilon|} \Ge\left(\phie;\hex\right)<\infty,
\end{equation*}
but
\begin{equation*}
\Ge(\phie;\hex) = \Ge(\phie) - \int_\Omega \hex\cdot e^{i\phie} dx , 
\end{equation*}
where $\Ge(\phie)= \int_{\Omega}|\nabla \phie|^{2} d x+\frac{1}{2 \pi \varepsilon} \int_{\partial \Omega} \sin ^{2}(\phie-g) d \mathcal{H}^{1}$ is the energy functional studied in the original theorem without an external field. 
 Now, since $\left| -\int_{\Omega} \hex\cdot e^{i\phie} \,dx    \right| \leq |\hex| |\Omega|$, we get
 $\limsup _{\varepsilon \rightarrow 0} \frac{-1}{|\log \varepsilon|}\int_{\Omega} \hex\cdot e^{i\phie} \,dx = 0$ . Hence,
\begin{equation*}
    \limsup _{\varepsilon \rightarrow 0} \frac{1}{|\log \varepsilon|} \Ge\left(\phie\right)<\infty , 
\end{equation*}
which is the lower bound assumption in Theorem 4.2 in \cite{ignatk2}, so the conclusion of this part follows (except the first-order lower bound \eqref{lowerbound1} that we have to show), and we get the first-order lower bound 
\begin{equation*}
\liminf _{\varepsilon \rightarrow 0} \frac{1}{|\log \varepsilon|} \Ge\left(\phie\right) \geq \pi \sum_{j=1}^{N}\left|d_{j}\right|  .
\end{equation*}
Hence, 
\begin{align*}
    \liminf _{\varepsilon \rightarrow 0} \frac{1}{|\log \varepsilon|} \Ge\left(\phie;\hex\right) & = \liminf _{\varepsilon \rightarrow 0} \left( \frac{1}{|\log \varepsilon|} \Ge\left(\phie\right) -   \frac{1}{|\log \varepsilon|} \int_{\Omega} \boldsymbol{h}_{ext}\cdot e^{i\phi} \,dx\right)\\
    & = \liminf _{\varepsilon \rightarrow 0}\frac{1}{|\log \varepsilon|} \Ge\left(\phie\right)  \geq \pi \sum_{j=1}^{N}\left|d_{j}\right|.
\end{align*}
\item To prove the second part of this theorem, let $\left(\phie\right)_{\varepsilon}$ be a sequence/ family in $H^{1}(\Omega ; \R)$ satisfying the convergence in part one with the limit $\phi_{0}$ on $\partial \Omega$ as $\varepsilon \rightarrow 0$, and assume
\begin{equation}\label{ghextbounded}
\limsup _{\varepsilon \rightarrow 0}\left(\Ge\left(\phie;\hex\right)-\pi|\log \varepsilon| \sum_{j=1}^{N}\left|d_{j}\right|\right)<\infty.
\end{equation}
Then, since $\limsup _{\varepsilon \rightarrow 0}\left(-\int_{\Omega} \boldsymbol{h}_{ext}\cdot e^{i\phie} \,dx \right)$ is finite, we get 
\begin{equation}\label{gepsilonbounded}
    \limsup _{\varepsilon \rightarrow 0}\left(\Ge\left(\phie\right)-\pi|\log \varepsilon| \sum_{j=1}^{N}\left|d_{j}\right|\right)<\infty , 
\end{equation}
 so the conclusion of part two of Theorem 4.2 in \cite{ignatk2} follows, which means that the conclusion of this part also follows, except the lower bound \eqref{secondlowerbound} that we will prove now.

 To do this, we start by writing the sequence $\left(\phie\right)_{\varepsilon}$ as a sum of two sequences, one of them is harmonic in $\Omega$ and the other one is zero on the boundary $\partial\Omega$, i.e. $\phie=\lambdae+ \thetae $, where $\lambdae\in H^1(\Omega;\R)$ satisfies 
\begin{equation*}
  \begin{cases}
   \Delta\lambdae  = 0  &  \text{in}  \ \Omega, \\ 
 \lambdae=\phie & \text{on} \ \partial\Omega.
      \end{cases}
     \end{equation*} 
Moreover, $\thetae \in H^1(\Omega;\R)$, and $\thetae=0$  on $\partial\Omega$. Then, the first term in $\Ge\left(\phie; \hex\right)$ can be written as
\begin{equation*}
\int_\Omega |\nabla \phie|^2 \,dx =  \int_\Omega |\nabla \lambdae|^2 \,dx + \int_\Omega |\nabla \thetae|^2 \,dx + 2  \int_\Omega \nabla \lambdae \cdot \nabla \thetae \,dx. 
\end{equation*} 
The last term to the RHS above is zero since 
\begin{equation*}
    \int_\Omega \nabla \lambdae \cdot \nabla \thetae = -\int_\Omega \thetae\Delta \lambdae + \int_{\partial\Omega}\frac{\partial\lambdae}{\partial\nu}  \thetae =0
\end{equation*}
(using Green's formula and the assumptions on $\lambdae$ and $\thetae$). Therefore,  
\begin{align*}
    \liminf _{\varepsilon \rightarrow 0}\left(\Ge\left(\phie; \hex\right)-\pi N|\log \varepsilon|\right) & = \liminf _{\varepsilon \rightarrow 0} \Big( \int_\Omega |\nabla \phie|^2 dx  + \frac{1}{2\pi \varepsilon} \int_{\partial \Omega} \sin^2(\phie-g) d\mathcal{H}^1 \\
    & - \int_{\Omega}  \boldsymbol{h}_{ext}\cdot e^{i\phie}dx -\pi N|\log \varepsilon| \Big) \\
    & = \liminf _{\varepsilon \rightarrow 0} \Big(
    \int_\Omega |\nabla \lambdae|^2 \,dx  + \frac{1}{2\pi \varepsilon} \int_{\partial \Omega} \sin^2(\lambdae-g) \, d\mathcal{H}^1 \\ &-\pi N|\log \varepsilon| 
    +\int_\Omega |\nabla \thetae|^2 \,dx - \int_{\Omega}  \boldsymbol{h}_{ext}\cdot  e^{i\left(\lambdae+\thetae \right)} \,dx  \Big) .
\end{align*}
From Theorem 4.2 in \cite{ignatk2} (equation $(48)$), the first three terms in the last equation above satisfy the inequality
\begin{equation*}
     \liminf _{\varepsilon \rightarrow 0} \left(
    \int_\Omega |\nabla \lambdae|^2 \,dx  + \frac{1}{2\pi \varepsilon} \int_{\partial \Omega} \sin^2(\lambdae-g) \, d\mathcal{H}^1  -\pi N|\log \varepsilon| \right) \geq W_{\Omega;0}(\{ (a_j,d_j)\}) + N \gamma_0 , 
\end{equation*}
where $W_{\Omega;0}(\{ (a_j,d_j)\})$ is the renormalized energy when $\hex=0$. To finish the proof and get the lower bound \eqref{secondlowerbound}, we need to show that 
\begin{equation}\label{difference.renormalized.energies.ineq}
    \liminf _{\varepsilon \rightarrow 0} \left( \int_\Omega |\nabla \thetae|^2 \,dx - \int_{\Omega}  \boldsymbol{h}_{ext}\cdot e^{i\left(\lambdae+\thetae \right)} \,dx \right)  \geq W_{\Omega;\hex}(\{ (a_j,d_j)\})-W_{\Omega;0}(\{ (a_j,d_j)\}). 
\end{equation}
%
%
Firstly, we will find a lower bound for the first term above. The sequence $\left( \nabla\thetae\right)_\varepsilon$ is bounded in $L^2(\Omega)$. This is true since 
\begin{align*}
    \|\nabla\thetae \|^2_{L^2(\Omega)} = \int_\Omega |\nabla\thetae|^2 \, dx & =  \Ge(\phie; \hex) - \Ge(\lambdae) + \int_\Omega \hex.e^{i\phie} \, dx, \\
    & \leq \limsup_{\varepsilon\rightarrow 0}\left[  \Ge(\phie; \hex) - \Ge(\lambdae) + \int_\Omega \hex.e^{i\phie} \, dx\right] , 
\end{align*}
by adding and subtracting $\left( -\pi |\log\varepsilon| \sum_{j=1}^{N} |d_j|\right)$, and from \eqref{ghextbounded}, \eqref{gepsilonbounded} and the boundedness of the last term above, we conclude that $\left(\nabla\thetae \right)_\varepsilon$ is bounded in $L^2(\Omega)$. Using Poincaré inequality, we get that $\left(\thetae\right)_\varepsilon$ is bounded in ${L^2(\Omega)}$. This implies that the sequence $\left( \thetae\right)_\varepsilon$ is bounded in $H^1(\Omega)$ (remember that $\left\|\thetae\right\|^2_{H^1(\Omega)} = \left\|\thetae\right\|_{L^2(\Omega)}^2+\left\|\nabla\thetae\right\|_{L^2(\Omega)}^2 $). We know that $H^1(\Omega)$ is a separable Hilbert space, and every bounded sequence in a separable Hilbert space has a weakly convergent subsequence. Therefore, there exists $\theta_0 \in H^1(\Omega)$ such that up to a subsequence $\thetae \rightharpoonup \theta_0$ as $\varepsilon \rightarrow 0$ . Using the weak lower semicontinuity of the Dirichlet integral, we conclude that 
\begin{equation*}
    \liminf_{\varepsilon \rightarrow 0}\left( \int_\Omega |\nabla \thetae|^2 \, dx \right) \geq  \int_\Omega |\nabla \theta_0|^2 \, dx .
\end{equation*}
Let us now find a lower bound for the second term. The sequence $(\phie)_\varepsilon$ is a bounded sequence in $W^{1,q}(\Omega)$, $\forall q \in [1,2)$ as was shown in \cite{ignatk2}. Using Rellich-Kondrachov compactness theorem, we know that $W^{1,q}(\Omega)$ is compactly embedded in $L^p(\Omega)$ ($\forall \ 1\leq p < q^*$, where $\frac{1}{q^*} = \frac{1}{q} - \frac{1}{2}$). Therefore, the bounded sequence $(\phie)_\varepsilon$ converges strongly in $L^p(\Omega)$, so for a further subsequence it converges pointwise almost everywhere
, i.e. there exists $\phi_1 \in L^p(\Omega)$ such that 
\begin{equation*}
    \phie \rightarrow \phi_1 \ \text{  a.e} \ \text{in} \ \Omega . 
    \end{equation*}
 Now, since the exponential function is continuous, we get the convergence for the chosen subsequence of $e^{i\phie}$ to $e^{i\phi_1}$ pointwise almost everywhere. Moreover, $|e^{i\phie}|\leq 1$, so using the dominated convergence theorem, we get
\begin{equation*}
    \lim_{\varepsilon \rightarrow 0} \int_{\Omega}  \hex\cdot e^{i\phie} \,dx =  \int_\Omega  \hex\cdot e^{i\phi_1} \,dx
\end{equation*}
Therefore, we get the lower bound 
\begin{equation}\label{lower}
    \liminf_{\varepsilon \rightarrow 0}\left( \int_\Omega |\nabla \thetae|^2 \, dx  - \int_{\Omega}  \hex\cdot e^{i\phie} \,dx\right) \geq  \int_\Omega |\nabla \theta_0|^2 \, dx -\int_{\Omega}  \hex\cdot e^{i\phi_1} \,dx.
\end{equation}
Going back to the definition of the renormalized energy $\Wad$ (Definition \ref{renormalized.definition}), we know that 
\begin{equation}\label{difference_renormalized_energies}
      \Wad  -   W_{\Omega; 0}(\{(a_j, d_j)\}) =F(\sigmas) = \int_{\Omega} \left( |\nabla \sigmas|^2- \hex\cdot e^{i(\phi_*+\sigmas)}\right) \, dx,
\end{equation}
where $\sigmas$ is any minimizer of the $F(\sigma)= \int_{\Omega} \left( |\nabla \sigma|^2- \hex\cdot e^{i(\phi_*+\sigma)}\right) \, dx$ and $\phi_*$ is the solution of 
\begin{equation*}
  \begin{cases}
  \Delta\phi_*  = 0 \ &\text{in} \ \Omega,\\
    \phi_*  = \phib \  & \text{on} \ \partial\Omega. 
  \end{cases} 
\end{equation*}
Then from equations \eqref{lower} and \eqref{difference_renormalized_energies}, we get the lower bound  
\begin{equation*}
    \liminf _{\varepsilon \rightarrow 0} \left( \int_\Omega |\nabla \thetae|^2 \,dx - \int_{\Omega}  \hex\cdot e^{i\left(\lambdae+\thetae \right)} \,dx \right)  \geq W_{\Omega;\hex}(\{ (a_j,d_j)\})-W_{\Omega;0}(\{ (a_j,d_j)\}).
\end{equation*}
which ends the proof of this part.

\item Let $\phi_0:\partial\Omega \rightarrow \R$ be a function satisfying the assumptions of this part, then from part 3 of Theorem 4.2 in \cite{ignatk2},  $\forall \ \varepsilon > 0$ small, $\exists \phihe^0 \in H^1(\Omega;\R)$ such that $\phihe^0  \rightarrow \phi_0$ in $L^p(\partial\Omega)$, and $\phihe^0  \rightarrow \phi_*$ in $L^p(\Omega)$ for every $p\in[1,\infty)$ where $\phi_*$ is the harmonic extension of $\phi_0$ given in Definition \ref{unperturbed.renormalized.def}, i.e. 
$\Delta\phi_* = 0$ in  $\Omega$, $\phi_* = \phi_0$ on $\partial\Omega$.

Let us construct the sequence 
\begin{equation*}
    \phihe = \phihe^0 + \sigmas,
\end{equation*}
where $\sigmas$ is a minimizer of $F(\sigma) =\int_{\Omega} \left( |\nabla \sigma|^2- \hex\cdot e^{i(\sigma+\phi_*)}\right) \, dx$ and $\sigmas=0$ on $\partial\Omega$. Then 
    \begin{equation*}
        \phihe \rightarrow \phi_0 \ \text{in} \ L^p(\partial\Omega),
    \end{equation*}
    (since $\phihe^0 \rightarrow \phi_0$ in $ L^p(\partial\Omega)$, and $\sigmas=0$ on $\partial\Omega$), and 
\begin{equation*}
    \phihe \rightarrow \sigmas + \phi_* \ \text{in} \ L^p(\Omega).
\end{equation*}

    To prove that $\limsup _{\varepsilon \rightarrow 0} \frac{1}{|\log \varepsilon|} \Ge\left(\phihe; \hex\right)=\pi \sum_{j=1}^{N}\left|d_{j}\right|$, let us firstly recall $\Ge\left(\phihe; \hex\right)$:
\begin{equation*}
\Ge\left(\phihe; \hex\right) =  \int_\Omega |\nabla \phihe |^2 \, dx  + \frac{1}{2\pi \varepsilon}  \int_{\partial\Omega} sin^2(\phihe^0-g) \, d\mathcal{H}^1 - \int_{\Omega} \hex\cdot e^{i \phihe} \, dx.
\end{equation*}
The first term of the above equation can be written as
    \begin{equation*}
        \int_\Omega |\nabla \phihe |^2 \, dx =  \int_\Omega |\nabla (\phihe^0 + \sigmas) |^2 \, dx = \int_\Omega \left( |\nabla\phihe^0|^2 + |\nabla\sigmas|^2 + 2 \nabla\phihe^0\cdot \nabla\sigmas\right)\, dx .
    \end{equation*}
From the proof of Lemma \ref{renormalized_lemma}, we know that $\|\nabla\sigmas\|_{L^2(\Omega)}$ is bounded. 
 Hence, 
\begin{equation*}
\limsup_{\varepsilon \rightarrow 0 } \frac{1}{|\log \varepsilon|}   \int_\Omega |\nabla \sigmas |^2 \, dx = 0.
\end{equation*}
Moreover, the sequence $\phihe^0$ satisfies the assumption of part 2 of Theorem 4.2 in \cite{ignatk2}, so $\nabla \phihe^0 \rightharpoonup \nabla \phi_*$ in $L^q(\Omega; \R^2)$ (where $q \in (1,2]$). Moreover, $\sigmas$ satisfies the Euler-Lagrange equation associated with the functional $F$, so by the regularity theorem we get that $\nabla\sigmas \in L^r(\Omega)$ (where $\frac{1}{r}= 1-\frac{1}{q}$). Therefore, by the definition of weak convergence in $L^q(\Omega)$, we get 
\begin{equation*}
 \int_\Omega \nabla\phihe^0\cdot \nabla\sigmas\, dx \rightarrow \int_\Omega \nabla\phi_*. \nabla \sigmas\, dx.
\end{equation*}
 By Green's Formula and the assumptions $\sigmas=0$ on $\partial \Omega$ and $\phi_*$ is harmonic in $\Omega$, we conclude   $ 2\int_\Omega \nabla\phi_*\cdot \nabla\sigma\, dx = 0$. 
Hence, 
\begin{align*}
    \limsup _{\varepsilon \rightarrow 0} \frac{1}{|\log \varepsilon|} \Ge\left(\phihe; \hex\right) & =\limsup _{\varepsilon \rightarrow 0} \frac{1}{|\log \varepsilon|} \Big( \int_\Omega |\nabla \phihe^0 |^2 \, dx + \frac{1}{2\pi \varepsilon}  \int_{\partial\Omega} sin^2(\phihe^0-g) \, d\mathcal{H}^1 \\
    & - \int_{\Omega} \hex\cdot e^{i \phihe} \, dx\Big)\\
    & =\limsup _{\varepsilon \rightarrow 0} \frac{1}{|\log \varepsilon|} \left( \int_\Omega |\nabla \phihe^0 |^2 \, dx + \frac{1}{2\pi \varepsilon}  \int_{\partial\Omega} sin^2(\phihe^0-g) \, d\mathcal{H}^1\right)\\
     & = \limsup _{\varepsilon \rightarrow 0} \frac{1}{|\log \varepsilon|} \Ge\left(\phihe^0\right)=\pi \sum_{j=1}^{N}\left|d_{j}\right| .
\end{align*}
Note that the term with $\hex$ above vanishes as $\varepsilon$ goes to $0$ (as we did in the proof of part one). 

Finally, to prove the last equality of this part we write $\Ge\left(\phihe; \hex\right)$ as 
\begin{equation*}
\Ge\left(\phihe; \hex\right) = \Ge\left(\phihe^0\right) + \int_\Omega \left(|\nabla \sigmas |^2 - \hex\cdot e^{i\phihe}  + 2 \nabla\phihe^0\cdot \nabla\sigmas\right)\, dx .
\end{equation*}
The last term will vanish as $\varepsilon \to 0$ as shown above, so
\begin{align*}
   \limsup _{\varepsilon \rightarrow 0}\left(\Ge\left(\phihe ; \hex\right)-N \pi \log \frac{1}{\varepsilon}\right)=
     \limsup _{\varepsilon \rightarrow 0}\Big(&\Ge\left(\phihe^0 \right)-N \pi \log \frac{1}{\varepsilon}\\
     & + \int_\Omega \left(|\nabla \sigma |^2 -\hex\cdot e^{i\phihe}\right)\, dx \Big), 
\end{align*}
but from part 3 of Theorem 4.2 in \cite{ignatk2}, we know that 
\begin{equation*}
  \limsup _{\varepsilon \rightarrow 0}\left(\Ge\left(\phihe^0\right)-N \pi \log \frac{1}{\varepsilon}\right)  =  W_{\Omega;0}\left(\left\{\left(a_{j}, d_{j}\right)\right\}\right)+N \gamma_{0}, 
\end{equation*}
so to get the conclusion, we just need to prove that 
\begin{equation*}
     \limsup _{\varepsilon \rightarrow 0}\int_\Omega \left(|\nabla \sigmas |^2 - \hex\cdot e^{i\phihe}\right)\, dx  =  W_{\Omega;\hex}\left(\left\{\left(a_{j}, d_{j}\right)\right\}\right) - W_{\Omega;0}\left(\left\{\left(a_{j}, d_{j}\right)\right\}\right).
\end{equation*}
Using the dominated convergence theorem, we get 
\begin{align*}
     \limsup _{\varepsilon \rightarrow 0}\int_\Omega \left(|\nabla \sigmas |^2 - \hex\cdot e^{i\left(\phihe^0+\sigmas\right)}\right)\, dx & = \int_\Omega \left(|\nabla \sigmas |^2 - \hex\cdot e^{i\left(\phi_*+\sigmas\right)}\right)\, dx ,\\
     &= F(\sigmas),\\
     &=  W_{\Omega; \hex}(\{(a_j, d_j)\}) -   W_{\Omega; 0}(\{(a_j, d_j)\}),
\end{align*}
which ends the proof of this part.

\end{enumerate}
\end{proof}

\section{Gamma-convergence for the reduced energy (Theorem \ref{epsilon.eta.energy})}
Here we prove Theorem \ref{epsilon.eta.energy}, which has some compactness and $\Gamma$-convergence results for the energy $\eee$. 
\begin{proof}
     Assume $u_{\varepsilon} \in H^{1}\left(\Omega ;\R^{2}\right)$ and
\begin{equation}
\limsup _{\varepsilon \rightarrow 0} \frac{1}{|\log \varepsilon|} \eee\left(u_{\varepsilon}; \hex\right)<\infty .
\end{equation}
\begin{enumerate}
 \item If we prove the following claim, then this part follows immediately from Theorem 18 in \cite{ignatk}.
    
    Claim: If
\begin{equation*}
\limsup _{\varepsilon \rightarrow 0} \frac{1}{|\log \varepsilon|} \eee\left(u_{\varepsilon}; \hex\right)<\infty, 
\end{equation*}
then
\begin{equation*}
\limsup _{\varepsilon \rightarrow 0} \frac{1}{|\log \varepsilon|} \eee\left(u_{\varepsilon}\right)<\infty,  
\end{equation*}
where $\eee\left(u_{\varepsilon}\right)$ is the energy with no external field, i.e.
\begin{equation*}
\eee\left(u_{\varepsilon}; \hex\right) = \eee\left(u_{\varepsilon}\right) - \int_{\Omega} \hex.u_\varepsilon \, dx.   
\end{equation*}
\begin{proof}
Assume
\begin{align*}
   \eee(u_\varepsilon;\hex) =& \int_{\Omega}|\nabla u_\varepsilon|^2 \,dx  + \frac{1}{2 \pi \varepsilon} \int_{\partial\Omega}(u_\varepsilon \cdot \nu)^2 d\mathcal{H}^1\\
   &+\frac{1}{\eta^2 } \int_{\Omega}( 1-|u_\varepsilon|^2 )^2\,dx - \int_{\Omega} \hex\cdot u_\varepsilon \,dx,\\
   &\leq C|\log\varepsilon|,
\end{align*}
where $C>0$ is a constant. The first two terms above are non-negative, therefore,  
\begin{equation}\label{last.two.terms}
    \frac{1}{\eta^2 } \int_{\Omega}( 1-|u_\varepsilon|^2 )^2\,dx - \int_{\Omega} \hex\cdot u_\varepsilon \,dx \leq C|\log\varepsilon|.
\end{equation}
Now, we want to prove the following claim to finish the proof:

Claim: If  
\begin{equation*}
    \frac{1}{\eta^2 } \int_{\Omega}( 1-|u_\varepsilon|^2 )^2\,dx - \int_{\Omega} |\hex| |u_\varepsilon| \,dx \leq C|\log\varepsilon|,
\end{equation*}
then $$\int_{\Omega} |u_\varepsilon| \,dx \leq C.$$
To prove this claim, let's state and prove the following lemma:
\begin{lemma}
    Let $0\leq x\in \R$, then there exists a constant $C$, $\varepsilon_0>0$ and $\eta_0>0$ depending on $\hex$ 
    such that for all $\varepsilon>0$ and $\eta>0$ with $\varepsilon<\varepsilon_0$, $\eta<\eta_0$ and $\varepsilon\geq\eta$ 
     the following inequality holds:
    \begin{equation*}
          x \leq  \frac{1}{\eta^2 |\log\varepsilon|}( 1-x^2 )^2 - \frac{|\hex|}{|\log\varepsilon|} x + C.
    \end{equation*}
\end{lemma}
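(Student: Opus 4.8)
The plan is to treat this as a purely elementary one-variable estimate. Writing $c=c(\varepsilon):=\frac{|\hex|}{|\log\varepsilon|}$ and $A=A(\varepsilon,\eta):=\frac{1}{\eta^{2}|\log\varepsilon|}$, the claimed inequality is equivalent to
\begin{equation*}
 p(x):=(1+c)\,x-A\,(1-x^{2})^{2}\ \le\ C\qquad\text{for all } x\ge 0 .
\end{equation*}
The point is that, in the range $\eta\le\varepsilon$ with $\varepsilon$ small, the coefficient $A$ of the quartic well is large — it is bounded below by $\frac{1}{\varepsilon^{2}|\log\varepsilon|}\to\infty$ — while the linear coefficient $1+c$ stays bounded; hence the supremum over $x\ge0$ of the concave function $p$ is controlled by a constant depending only on $\hex$.

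First I would fix the parameters, in this order to avoid circularity: choose $\varepsilon_{0}\in(0,e^{-1/2})$ small enough that $\varepsilon_{0}^{2}|\log\varepsilon_{0}|<1$, put $\eta_{0}:=\varepsilon_{0}$, and set $c_{0}:=\frac{|\hex|}{|\log\varepsilon_{0}|}$ and $C:=(1+c_{0})+\frac{1}{4}(1+c_{0})^{2}$. Then for any admissible pair $\varepsilon<\varepsilon_{0}$, $\eta<\eta_{0}$ with $\eta\le\varepsilon$ one has $|\log\varepsilon|>|\log\varepsilon_{0}|$, hence $c\le c_{0}$; and since $t\mapsto t^{2}|\log t|$ is increasing on $(0,e^{-1/2})$ and $\eta\le\varepsilon<\varepsilon_{0}$, we get $\eta^{2}|\log\varepsilon|\le\varepsilon^{2}|\log\varepsilon|\le\varepsilon_{0}^{2}|\log\varepsilon_{0}|<1$, i.e. $A\ge 1$.

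The core estimate is then immediate. Since $x\ge 0$ we have $(1-x^{2})^{2}=(1-x)^{2}(1+x)^{2}\ge(1-x)^{2}$, so $p(x)\le q(x):=(1+c)x-A(1-x)^{2}$. The function $q$ is a downward parabola in $x$ whose maximum over $\R$ is attained at $x=1+\frac{1+c}{2A}$ and equals, by completing the square, $(1+c)+\frac{(1+c)^{2}}{4A}$. Using $c\le c_{0}$ and $A\ge 1$ this does not exceed $(1+c_{0})+\frac{1}{4}(1+c_{0})^{2}=C$, so $p(x)\le q(x)\le C$ for all $x\ge0$, which is the claim.

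There is no genuine obstacle in this lemma; the only thing to be careful with is that it is really the hypothesis $\eta\le\varepsilon$ (not merely $\eta,\varepsilon\to0$ separately) that forces $A\ge1$ and lets the quartic penalty dominate the linear and Zeeman contributions, and that $\varepsilon_{0},c_{0},C$ must be fixed depending only on $\hex$ before $\varepsilon,\eta$ are picked. As an alternative to completing the square, one may instead split $x\in[0,2]$ — where $p(x)\le 2(1+c_{0})$ trivially — from $x>2$ — where $(x^{2}-1)^{2}\ge 3x$ gives $p(x)\le\big((1+c_{0})-3A\big)x\le0$ once $\varepsilon_{0}$ is chosen small enough that $3A\ge 1+c_{0}$, which is again guaranteed since $A\ge\frac{1}{\varepsilon_{0}^{2}|\log\varepsilon_{0}|}\to\infty$ while $c_{0}\to0$ as $\varepsilon_{0}\to0$.
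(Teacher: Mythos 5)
Your proposal is correct, and every step checks out: the reduction to bounding $p(x)=(1+c)x-A(1-x^2)^2$, the verification that $\eta\le\varepsilon<\varepsilon_0<e^{-1/2}$ forces $A=\frac{1}{\eta^2|\log\varepsilon|}\ge 1$ (via monotonicity of $t\mapsto t^2|\log t|$), the comparison $(1-x^2)^2\ge(1-x)^2$ for $x\ge0$, and the completed square giving the maximum $(1+c)+\frac{(1+c)^2}{4A}\le(1+c_0)+\frac{1}{4}(1+c_0)^2=C$. The route differs from the paper's in its mechanics: the paper argues by a case split, using $(1-x^2)^2\ge\frac{x^4}{4}$ for $x\ge2$ so that the quartic term alone dominates $(1+c)x$ there (no constant needed), and disposing of $x<2$ trivially with a fixed constant once $\frac{|\hex|}{|\log\varepsilon|}\le1$; it also gets $A\ge1$ slightly differently, from $|\log\varepsilon|\le|\log\eta|$ and $\eta^2|\log\eta|\to0$. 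Your main argument instead replaces the quartic by a quadratic and computes the vertex, which buys an explicit, clean constant $C=(1+c_0)+\frac14(1+c_0)^2$ and fully explicit parameter choices with the dependence on $\hex$ tracked through $c_0$, at the cost of a small computation; the case-split alternative you sketch at the end is essentially the paper's proof. Both are elementary and equally valid, and your remark that it is precisely the hypothesis $\eta\le\varepsilon$ that makes the quartic coefficient large is exactly the point the paper's proof also relies on.
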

\begin{proof}
    Assume $0\leq x\in \R$, $\varepsilon_0,\, \eta_0 >0$, $0<\varepsilon<\varepsilon_0$ and $0<\eta< \eta_0$ such that $\varepsilon\geq\eta$ (i.e. $|\log\varepsilon| \leq |\log\eta|$). Then $\eta^2|\log\varepsilon| \leq \eta^2|\log\eta|$, and as $\eta$ goes to zero we get that $\eta^2|\log\eta|$ goes to zero. Therefore, $\eta^2|\log\varepsilon|$ will be less than one, and $\frac{1}{\eta^2|\log\varepsilon|}>1$. Now if $x\geq 2$, we get 
     \begin{align*}
     x^2 -1 \geq  \frac{ x^2}{2} \geq 1 &\Rightarrow ( 1-x^2 )^2 \geq \frac{ x^4}{4}, \\
     &\Rightarrow  \frac{1}{\eta^2 |\log\varepsilon|}( 1-x^2 )^2 - \frac{|\hex|}{|\log\varepsilon|} x\geq  x\left( \frac{x^3}{4} - \frac{|\hex|}{|\log\varepsilon|}  \right) >x .
    \end{align*}  
    If $x < 2$, then there is a constant $C$ such that 
    \begin{equation*}
        \frac{1}{\eta^2|\log\varepsilon|}(1-x^2)^2 - \frac{|\hex| }{|\log\varepsilon|} x> -x> x-C.
    \end{equation*}
\end{proof}
Now, using the above lemma and \eqref{last.two.terms}, we get that there exists a constant $C_1$, which may depend on the external field $\hex$, such that
    \begin{align*}
        |u_\varepsilon| & \leq  \frac{1}{\eta^2 |\log\varepsilon|}( 1-|u_\varepsilon|^2 )^2 - \frac{|\hex|}{|\log\varepsilon|} |u_\varepsilon| + C_1,\\
        \Rightarrow   \int_{\Omega}|u_\varepsilon| \, dx & \leq  \int_{\Omega} \frac{1}{\eta^2 |\log\varepsilon|}( 1-|u_\varepsilon|^2 )^2 \,dx - \int_{\Omega} \frac{|\hex|}{|\log\varepsilon|} |u_\varepsilon| \, dx + \int_{\Omega} C_1 \, dx \leq C + \int_{\Omega} C_1 \, dx.
    \end{align*}
    The last term above is bounded since $\Omega$ is a bounded domain, hence, we get that
    \begin{equation*}
\limsup _{\varepsilon \rightarrow 0} \frac{1}{|\log \varepsilon|} \eee\left(u_{\varepsilon}\right)<\infty.  
\end{equation*}
\end{proof} 
%
%
%
\item Assume $(u_\varepsilon)$ satisfies the convergence assumption on the Jacobians as $\varepsilon \rightarrow 0$, then from Theorem 18 in \cite{ignatk} we get:
\begin{equation*}
 \liminf_{\varepsilon\rightarrow 0} \frac{1}{|\log\varepsilon|} \eee(u_\varepsilon) \geq \pi \sum_{j=1}^{N}|d_j|, 
\end{equation*}
but $\liminf_{\varepsilon\rightarrow 0} \frac{1}{|\log\varepsilon|} \int_\Omega \hex.u_\varepsilon = 0$. Therefore, we get the lower bound
\begin{equation*}
    \liminf_{\varepsilon\rightarrow 0} \frac{1}{|\log\varepsilon|} \eee(u_\varepsilon; \hex) \geq \pi \sum_{j=1}^{N}|d_j|. 
\end{equation*}
\item Assume the sharper bound
\begin{equation*}
\limsup _{\varepsilon \rightarrow 0}\left(\eee\left(u_{\varepsilon};\hex\right)-\pi \sum_{j=1}^{N}\left|d_{j}\right||\log \varepsilon|\right)<\infty.
\end{equation*}
Then, we get a sharper bound on $\eee(u_\varepsilon)$:
\begin{equation}
\limsup _{\varepsilon \rightarrow 0}\left(\eee\left(u_{\varepsilon}\right)-\pi \sum_{j=1}^{N}\left|d_{j}\right||\log \varepsilon|\right)<\infty,
\end{equation}
since $\int_\Omega \hex.u_\varepsilon<\infty$. The first part of the third point then follows from Theorem 18 (i.e. $d_j = \pm 1$). 

To finish the proof of this part, we still have to show that the following lower bound holds:
\begin{equation*}
\liminf _{\varepsilon \rightarrow 0}\left(\eee\left(u_{\varepsilon}; \hex\right)-\pi N|\log \varepsilon|\right) \geq W_{\Omega; \hex}\left(\left\{\left(a_{j}, d_{j}\right)\right\}\right)+\gamma_{0} N.
\end{equation*}
The sequence/ family $(u_\varepsilon)$ is a sequence of maps in $H^{1}\left(\Omega ;\R^{2}\right)$ with $\eee(u_\varepsilon;\hex) \leq C|\log\varepsilon|$, which implies $\eee(u_\varepsilon) \leq C|\log\varepsilon|$. Then following the proof of Theorem 1.2 page 48 in \cite{ignatk2}, we can construct a sequence/ family $U_\varepsilon$ such that:
\begin{itemize}
    \item $U_\varepsilon \in H^1(\Omega;\Ss^1)$, 
    \item $\|U_\varepsilon - u_\varepsilon \|_{L^p(\Omega)} \rightarrow 0 $, $\|U_\varepsilon - u_\varepsilon \|_{L^p(\partial\Omega)} \rightarrow 0 $ as $\varepsilon \rightarrow 0$, $\forall p \in [1, \infty)$,
    \item $\eee(U_\varepsilon) \leq\eee(u_\varepsilon) + o_\varepsilon(1)$,
    \item $J(U_\varepsilon) - J(u_\varepsilon) \rightarrow 0 $
in $\left(Lip(\Omega)\right)^*$ as $\varepsilon \rightarrow 0$, where we define the norm $\|A\|_{\left(Lip(\Omega)\right)^*}$ for $A\in \left(W^{1,\infty}(\Omega)\right)^*$ (the dual space of $W^{1,\infty}(\Omega)$) as follows:
\begin{equation*}
   \|A\|_{\left(Lip(\Omega)\right)^*} = \sup \{ \langle A,\xi\rangle: \xi \in W^{1,\infty}(\Omega), |\nabla\xi|\leq 1 \}. 
\end{equation*}
\end{itemize}
That is, these maps $u_\varepsilon:\Omega\to\R^2$ can be approximated by $\Ss^1$ valued maps $U_\varepsilon:\Omega\to\Ss^1$ such that the energies of $u_\varepsilon$ and $U_\varepsilon$ are close, and their global Jacobians are close too. Moreover, the distance between $u_\varepsilon$ and $U_\varepsilon$ in $L^p(\Omega)$ and $L^p(\partial\Omega)$ goes to zero as $\varepsilon$ goes to zero for all $p\in[1,\infty)$. 

The idea to prove this starts by proving it for the unit disk $\Omega=B_1$, and to do so, a polar grid $\mathcal{R}$ is constructed in $B_1$, which divides the disk into small cells $\mathcal{C}\subset\mathcal{R}$. Then, an approximating $\Ss^1$-valued map $\hat{U}_\varepsilon$ for $u_\varepsilon$ is introduced inside the interior of the grid $\mathcal{R}$ by solving a minimization problem for the Ginzburg-Landau energy on each cell of the grid (and the minimizer is denoted by $\omega=\omega_\varepsilon\in H^1(int(\mathcal{C}), \R^2)$)  and then defining the minimizer in the whole interior of the grid. Then showing that as $\varepsilon$ goes to zero, the norm of this minimizer $\omega$ approaches 1 (and the norm of $u_\varepsilon$ is more than or equals $1/2$ on the grid) (see Corollary 4 in \cite{ignatk.lamy}) and defining an $\Ss^1$-valued map as  $\hat{U}_\varepsilon:=\omega/|\omega|$ in the interior of $\mathcal{R}$. Next, using $\hat{U}_\varepsilon$ and a transformation relating the disk $B$ and the interior of the grid $\mathcal{R}$, the approximating $\Ss^1$-valued map $U_\varepsilon$ of $u_\varepsilon$ is defined in $B_1$. Finally, this approximating $\Ss^1$-valued map is defined for the simply connected $C^{1,1}$ domain $\Omega$ using the existence of a conformal map transforming $\Omega$ and $\partial\Omega$ to $B_1$ and $\partial B_1$, respectively. for more details, see the proof of Theorem 3.1 in \cite{ignatk2}.

From the second point above,  $\|U_\varepsilon - u_\varepsilon \|_{L^p(\Omega)} \rightarrow 0$  as  $\varepsilon \rightarrow 0$, so $\lim_{ \varepsilon \rightarrow 0}\int_\Omega \left( U_\varepsilon-u_\varepsilon\right) \, dx = 0 $, which implies 
\begin{equation}\label{uepsilon}
 \lim_{ \varepsilon \rightarrow 0}\left( -\int_\Omega\hex\cdot U_\varepsilon \, dx\right)= \lim_{ \varepsilon \rightarrow 0}\left( -\int_\Omega\hex\cdot u_\varepsilon \, dx\right), 
\end{equation}
for a fixed external field $\hex$. Now, using Lemma \ref{lifting} above, there exists liftings $\phie \in H^1(\Omega;\R)$ such that $U_\varepsilon = e^{i\phie}$ and $\eee(U_\varepsilon) = \Ge(\phie)$. Then, from the third assumption on the sequence $U_\varepsilon$ we get
\begin{equation*}
 \Ge(\phie) = \eee(U_\varepsilon)\leq \eee(u_\varepsilon) +o_\varepsilon(1), 
\end{equation*}
and from \eqref{uepsilon}, 
\begin{align*}
 \Ge(\phie) - \lim_{ \varepsilon \rightarrow 0}\left( -\int_\Omega\hex\cdot U_\varepsilon \, dx\right) &-\pi N |\log\varepsilon|\\
 &\leq \eee(u_\varepsilon) + \lim_{ \varepsilon \rightarrow 0}\left( -\int_\Omega\hex\cdot u_\varepsilon \, dx\right) -\pi N |\log\varepsilon|+o_\varepsilon(1),
\end{align*}
so 
\begin{equation*}
    \liminf_{\varepsilon \rightarrow 0} \left(  \Ge(\phie;\hex)  -\pi N |\log\varepsilon|\right) \leq \liminf_{\varepsilon \rightarrow 0} \left( \eee(u_\varepsilon;\hex) -\pi N |\log\varepsilon|+o_\varepsilon(1)\right).
\end{equation*}
From Theorem \ref{Gepsilon.theorem}, we get the second-order lower bound
\begin{align*} 
 W_{\Omega; \hex}\left(\left\{\left(a_{j}, d_{j}\right)\right\}\right)+\gamma_{0} N\leq\liminf_{\varepsilon \rightarrow 0} \big(  \Ge(\phie;\hex) & -\pi N |\log\varepsilon|\big)\\
 &\leq \liminf_{\varepsilon \rightarrow 0} \left( \eee(u_\varepsilon;\hex) -\pi N |\log\varepsilon|\right).
\end{align*}
Note that we can use this theorem since $\left( \phie \right)$ satisfies the convergence assumption in Theorem \ref{Gepsilon.theorem} -see the proof of Theorem 1.2 page 49 in \cite{ignatk2} for more details- and since 
\begin{equation*}
 \limsup_{\varepsilon\rightarrow0}\left( \Ge(\phie; \hex) - \pi |\log\varepsilon| \sum_{j=1}^N |d_j| \right) \leq \limsup_{\varepsilon\rightarrow0}\left( \eee(u_\varepsilon; \hex) - \pi |\log\varepsilon| \sum_{j=1}^N |d_j| \right) < \infty.
\end{equation*}
\item The proofs of this part and the next two parts follow immediately from Theorem 18 in \cite{ignatk}.
\item[7.] 
The proof of this point follows from Theorem 18, except for the last equation that follows if we set $u_\varepsilon= e^{i\psi_\varepsilon}$ for $\psi_\varepsilon\in H^1(\Omega;\R)$ (such a lifting exists, see Lemma \ref{lifting}) and using the last equation in Theorem \ref{Gepsilon.theorem}, which we can use since $\eee(u_\varepsilon) = \Ge(\psi_\varepsilon)$.

\end{enumerate}

\end{proof}
\section{Gamma-convergence for the full 3D energy (Theorem \ref{rescaled.energy.compact})}
We start this section by proving Theorem \ref{rescaled+reduced} that connects the 3D energy $E_\alpha$ with the 2D energy $\eeeb$ introduced before. 
\begin{proof}(of Theorem \ref{rescaled+reduced})
Consider a family of magnetizations $\{\bfm_\alpha:\bfOm_\alpha\to \Ss^2\}_{\alpha\to 0}$ such that $\limsup_{\alpha\to 0} E_\alpha(\bfm_\alpha;\hext)<\infty$.
Then we get \begin{equation*}
    \limsup_{\alpha\to 0} E_\alpha(\bfm_\alpha)<\infty,
\end{equation*}
since \begin{equation*}\label{average.energy}
     E_\alpha(\bfm_\alpha;\hext) = E_\alpha(\bfm_\alpha) - \frac{1}{\alpha|\log\varepsilon|}\int_{\bfOm_\alpha} \hext.\bfm_\alpha \, dx = E_\alpha(\bfm_\alpha) - \frac{1}{|\log\varepsilon|}\int_{\bfOm} \hext.\bm \, dx ,
\end{equation*}
the last term above is bounded since $|\bm|\leq 1$ and $\varepsilon$ is very small($\varepsilon \rightarrow 0$), so we get the boundedness of $ \limsup_{\alpha\to 0} E_\alpha(\bfm_\alpha)$. Now, from Theorem 1 in \cite{ignatk}, we know that  
\begin{align*}
    E_\alpha(\bfm_\alpha) &\ge \eeeb(\bm)-o(1) \quad \textrm{as} \quad \alpha\to 0,\\
    \Rightarrow \ E_\alpha(\bfm_\alpha)& - \frac{1}{\alpha|\log\varepsilon|}\int_{\bfOm_\alpha} \hext.\bfm_\alpha \, dx  \ge \eeeb(\bm) - \frac{1}{\alpha|\log\varepsilon|}\int_{\bfOm_\alpha} \hext.\bfm_\alpha \, dx  -o(1) ,\\
    \Rightarrow \ E_\alpha(\bfm_\alpha;&\hext)\ge \eeeb(\bm;\hext)-o(1) \quad \textrm{as} \quad \alpha\to 0. 
\end{align*}
The other results of the theorem can be proven similarly.
\end{proof}
Now, we are ready to prove the main theorem of this paper.
\begin{proof}(Of Theorem \ref{rescaled.energy.compact})
Assume  $\bfm_\alpha\in H^1(\bfOm_\alpha; \Ss^2)$ is a sequence of magnetizations such that
$$\limsup_{\alpha\to 0} E_\alpha(\bfm_\alpha;\hext)<\infty.$$
Then \begin{equation*}
    \limsup_{\alpha\to 0} E_\alpha(\bfm_\alpha)<\infty,
\end{equation*}
 as we showed in the proof of Theorem \ref{rescaled+reduced} above.
 \begin{enumerate}
     \item The proof of this part follows immediately from Theorem \ref{theorem9ignatk}.
     \item From Theorem \ref{rescaled+reduced}, equation\eqref{lowerbound2}, and part 2 of Theorem \ref{epsilon.eta.energy}, we get
     \begin{equation}
         \liminf_{\alpha\to 0} E_\alpha(\bfm_\alpha;\hext) \ge \liminf_{\alpha\to 0}\eeeb(\bm;\hext) \ge \liminf_{\alpha\to 0}\frac{1}{|\log\varepsilon|} \eee(\bmp;\hext) \ge \pi\sum_{j=1}^N |d_j|.
     \end{equation}
     \item To prove this part, assume we are working in the narrower regime \eqref{narrower.regime}, and assume 
     \begin{equation}\label{first}
         \limsup_{\alpha\to 0} |\log \varepsilon| \bigg(E_\alpha(\bfm_\alpha;\hext) - \pi\sum_{j=1}^N |d_j|\bigg)\leq C,
     \end{equation}
     then from inequality \eqref{narrower.inequality}, we get  
     \begin{equation}\label{second}
       \limsup_{\alpha\to 0} |\log \varepsilon| \bigg(\eeeb(\bm;\hext) - \pi\sum_{j=1}^N |d_j|\bigg) \leq \limsup_{\alpha\to 0} |\log \varepsilon| \bigg(E_\alpha(\bfm_\alpha;\hext) - \pi\sum_{j=1}^N |d_j|\bigg),
     \end{equation}
     and from \eqref{lowerbound2}, we get
     \begin{equation}\label{third}
       \limsup_{\alpha\to 0} \bigg(\eee(\bm;\hext) - \pi\sum_{j=1}^N |d_j| |\log \varepsilon|\bigg) \leq \limsup_{\alpha\to 0} |\log \varepsilon| \bigg(\eeeb(\bm;\hext) - \pi\sum_{j=1}^N |d_j|\bigg).
     \end{equation}
     From \eqref{first}, \eqref{second}, and \eqref{third}, we get \eqref{sharper.bound}. Therefore, from part 3 of Theorem \ref{epsilon.eta.energy}, we conclude that the multiplicities $d_j=\pm 1$ for $1\leq j\leq N$. Moreover, from the inequality in part 3 of Theorem \ref{epsilon.eta.energy}, inequality \eqref{second}, and \eqref{third}, we get the lower bound
     \begin{equation*}
         \liminf_{\alpha\to 0} |\log \varepsilon| \bigl(E_\alpha(\bfm_\alpha;\hext) - \pi N \bigr) 
\ge W_{\Omega;\hext}(\{(a_j,d_j)\}) + {\gamma_0}N.
     \end{equation*}
     \item Assume we are working on the narrower regime \eqref{narrower.regime} and 
     \begin{equation*}
         \limsup_{\alpha\to 0} |\log \varepsilon| \bigg(E_\alpha(\bfm_\alpha;\hext) - \pi\sum_{j=1}^N |d_j|\bigg)\leq C. 
     \end{equation*}
     Then 
     \begin{align*}
         \limsup_{\alpha\to 0} |\log \varepsilon| & \bigg(E_\alpha(\bfm_\alpha) - \frac{1}{|\log\varepsilon|}\int_{\bfOm_\alpha} \hext.\bfm_\alpha \, dx - \pi\sum_{j=1}^N |d_j|\bigg)\\
         &=\limsup_{\alpha\to 0} |\log \varepsilon| \bigg(E_\alpha(\bfm_\alpha) - \pi\sum_{j=1}^N |d_j|\bigg)  + \limsup_{\alpha\to 0}\left(-\int_{\bfOm} \hext\cdot\bm \, dx\right)\leq C,
         \end{align*}
         but the last term above is bounded, so we get 
         \begin{equation*}
             \limsup_{\alpha\to 0} |\log \varepsilon| \left(E_\alpha(\bfm_\alpha) - \pi\sum_{j=1}^N |d_j|\right) \leq C,
         \end{equation*}
         which is the assumption of parts 3 and 4 of Theorem \ref{theorem9ignatk}. Therefore, the conclusion of this part follows immediately from part 4 of Theorem \ref{theorem9ignatk}. 
 \end{enumerate}
    
\end{proof}
\section{Numerical simulations of the magnetization behaviour}

In this section, we illustrate the behaviour of the magnetization vector field in different domains with different values of the external field applied to them. The illustrations show the location of the vortices when the renormalized energy is minimal and the direction of the magnetization vector field inside and at the boundary of each domain. 

Based on the work done in this paper, and in \cite{ignatk, appliedfield}, we consider having only two boundary vortices $a_1\neq a_2$ on $\partial\Omega$ with multiplicities $d_1=d_2=1$. Moreover, we consider the renormalized energy
\begin{equation*}
    W_{\Omega;\hex}(a)= W_{\Omega;0}(a) +V_{\Omega;\hex}(a),
\end{equation*}
where $(a)$ here is a shorthand notation for the two vortices $a_1$ and $a_2$ and their multiplicities. The energy $W_{\Omega,0}(a)$ is the unperturbed renormalized energy (i.e. the renormalized energy when $\hex=0$) and $V_{\Omega;\hex}(a)$ is the perturbation of the energy when an external field is applied. It is given by
\begin{equation*}
 V_{\Omega;\hex}(a) = \min_{\theta\in H^1_0(\Omega)} G_{\Omega;\hex}(a;\theta),    
\end{equation*}
where 
\begin{equation*}
G_{\Omega;\hex}(a;\theta)=\int_\Omega\frac{1}{2} |\nabla\theta|^2 - \hex\cdot(e^{i\theta} M(x;a)) \, dx.
\end{equation*}
As mentioned in \cite{appliedfield}, for $\hex\in\R^2$ sufficiently small, the functional $G_{\Omega;\hex}(a;\theta)$ is strictly convex on $H^1_0(\Omega)$, so there is a unique minimizer $\theta\in H^1_0(\Omega)$. This $\theta$ satisfies the Euler-Lagrange equation associated with the functional $G_{\Omega;\hex}$ given by 
\begin{equation*}
    \begin{cases}
    -\Delta\theta  = i\hex\cdot(e^{i\theta} M(x;a))  \ \ \text{in}  \ \ \Omega, \\
    \theta=0 \ \ \text{on} \ \ \partial\Omega,
    \end{cases}
\end{equation*}
where $M(\cdot;a)$ is the canonical harmonic map associated to $\{(a_1,d_1), (a_2,d_2)\}$ (i.e. an $\Ss^1$-valued smooth harmonic map $M=e^{i\psi}$ -with $\Delta\psi=0$ in $\Omega$- which is tangent to the boundary of the domain $\partial\Omega$ except at $N$ boundary points $a_i\in \partial\Omega$, i=1,2,...,N, where $M$ winds according to the multiplicities $d_i$. In our case, we assume that $N$ equals two). Theorem 4 in \cite{ignatk} gives the form of the canonical harmonic maps for the unit disk $B_1$ and for domains $\Omega\subset \R^2$ where a $C^1$ conformal diffeomorphism $\Phi$ is defined from the closure of the unit disk $\Bar{B}_1$ to the closure of the domain $\Bar{\Omega}$. In this paper, we consider the case of the unit disk and the case of an oval-shaped domain. 

We also consider the magnetization vector $m$ to be given by 
\begin{equation*}
    m(x;a) = e^{i\theta} M(x;a), \ \ \forall x \in B_1.
\end{equation*}
Therefore, to find the magnetization vector, we must first find $\theta$ by solving the above PDE. To do so, we use Banach fixed point theorem (see Chapter 9 in \cite{evans} for more details) and we consider the non-linear mapping $A:H^1_0(\Omega) \rightarrow H^1_0(\Omega)$ which takes $\theta \in H^1_0(\Omega)$ to $-(\Delta)^{-1} (i\hex\cdot e^{i\theta} M(x;a))$. We fix a point $\theta_0 =0 \in  H^1_0(\Omega)$ and iteratively define $\theta_{n+1} =A[\theta_n]$ for n=0,1,2,..., i.e. $\theta_{n+1}$ satisfies 
\begin{equation*}
    \begin{cases}
    -\Delta\theta_{n+1}  = i\hex\cdot(e^{i\theta_n} M(x;a))  \ \ \text{in}  \ \ \Omega, \\
    \theta_{n+1}=0 \ \ \text{on} \ \ \partial\Omega.
    \end{cases}
\end{equation*}

In our Matlab code, we use the Partial Differential Equation Toolbox to solve the above PDE up to $\theta_4$ (i.e. for n=0,1,2, and 3) on the unit disk $\Omega=B_1$. This toolbox solves general partial differential equations using finite element analysis by meshing the unit disk into triangles, defining the boundary conditions on these smaller elements and solving the problem. As mentioned before, we work mainly on the unit disk and on an oval domain, but our code works for domains $\Omega\subset \R^2$ where a $C^1$ conformal diffeomorphism $\Phi$ from $\Bar{B}_1$ to $\Bar{\Omega}$ exists.    

\subsection{In the unit disk}
Consider the domain to be the unit disk, i.e. $\Omega=B_1$. Let $a_1=e^{is_1}$ and $a_2=e^{is_2}$ be the two boundary vortices on $\partial B_1$, where $s_1$ and $s_2$ are in $[0,2\pi)$ and let the vector $s=(s_1, s_2)$. The canonical harmonic map associated with these vortices is given by (see Theorem 4 in \cite{ignatk})
\begin{equation}\label{canonical.unitdisk}
   M(x;a)=\frac{(x-a_1)(x-a_2)|a_1-a_2|}{|x-a_1||x-a_2|(a_1-a_2)}, \ \ \forall x\in B_1,
\end{equation}
and the unperturbed renormalized energy is (see Theorem 6 in \cite{ignatk})
\begin{equation*}
   W_{\Omega;0}(a)= -\pi \log|a_1-a_2|.
\end{equation*}
Our aim is to find the location of the boundary vortices minimizing the renormalized energy, i.e. to find $s=(s_1, s_2)$ where $W_{\Omega;\hex}$ is minimum. Therefore, in our Matlab code, we use the "fminsearch" function, which looks for a local minimum of the renormalized energy. It searches for the vector $s=(s_1, s_2)$ and it starts from an initial expected point $s_0=[s_{01} \ \ s_{02}]$ we provide. Then, we plot the magnetization vector field $  m(x;a) = e^{i\theta} M(x;a)$ in the unit disk, where $\theta$ is the solution of the above PDE when the boundary vortices minimize the renormalized energy.

When no external field is applied to the unit disk, we expect the boundary vortices to be diametrically opposite for the renormalized energy to be minimum. This is true by looking at the form of the unperturbed renormalized energy and noticing that $\log|a_1-a_2|$ is maximum when the distance between the vortices is maximum.  Figure \ref{disk_m_1} shows an example of the magnetization vector field when $\hex=0$, and we see that the two boundary vortices represented by the two dots are in their expected locations. Note that the location of the vortices minimizing the renormalized energy is not unique as any two diametrically opposite vortices will minimize the renormalized energy. This is shown in Figure \ref{disk_W_1}, where the energy is lowest when $|s_1-s_2|=\pi$.

When the external field is nonzero, we expect the vortices to align in the direction of the field $\hex$. For example, when $\hex=(-0.01, 0)$, we expect the vortices to be diametrically opposite to each other in the direction of $\hex$, which is the outcome of our code as shown in Figure \ref{disk_m_2} Moreover, Figure \ref{disk_W_2} shows that the renormalized energy is minimum when $s_1=0$ and $s_2=\pm \pi$ as expected. 
\begin{figure}

\centering

    \subfloat[]{\label{disk_m_1}
    \centering
    
 \includegraphics[width=0.45\linewidth]{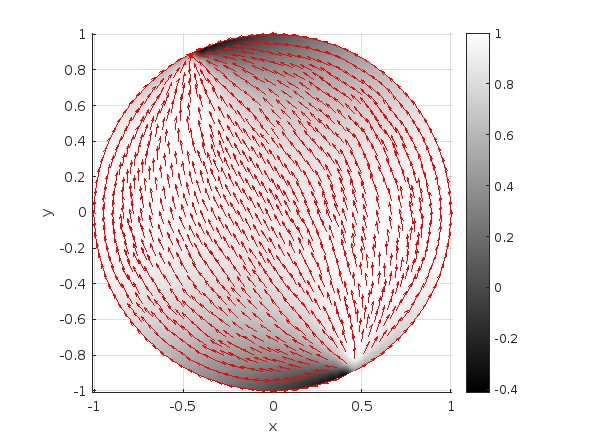}}
   \hfill
   \subfloat[]{\label{disk_W_1}
\centering
     \includegraphics[width=0.45\linewidth]{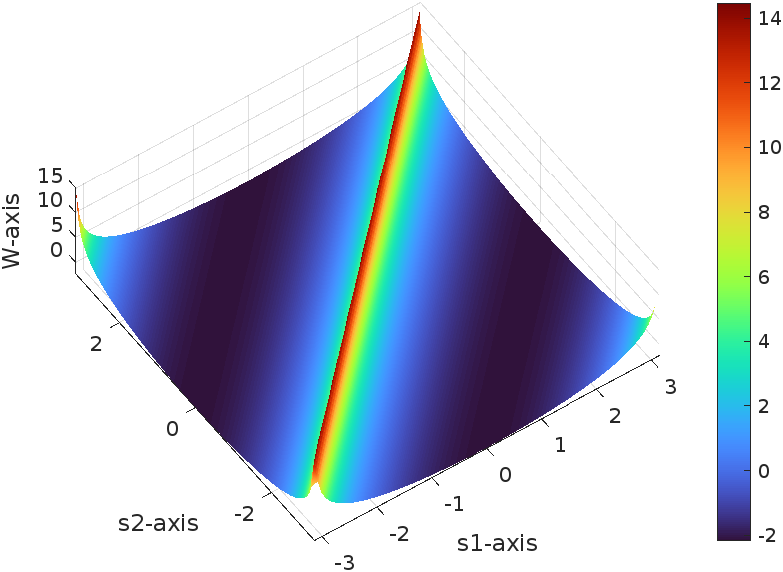}
     }    
     \hfill
       \subfloat[]{\label{disk_m_2}
    \centering
    
 \includegraphics[width=0.45\linewidth]{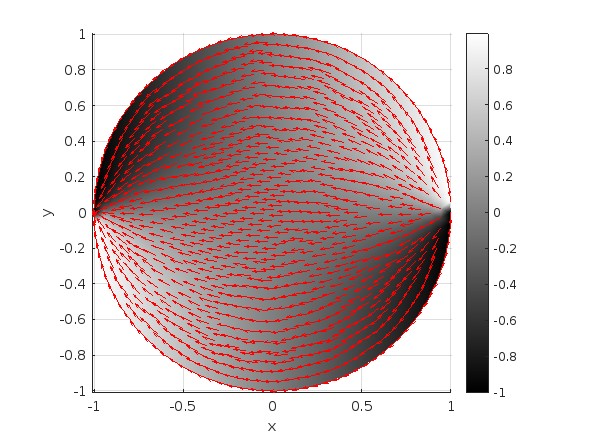}}
 \hfill
  \subfloat[]{\label{disk_W_2}
    \centering
    
 \includegraphics[width=0.5\linewidth]{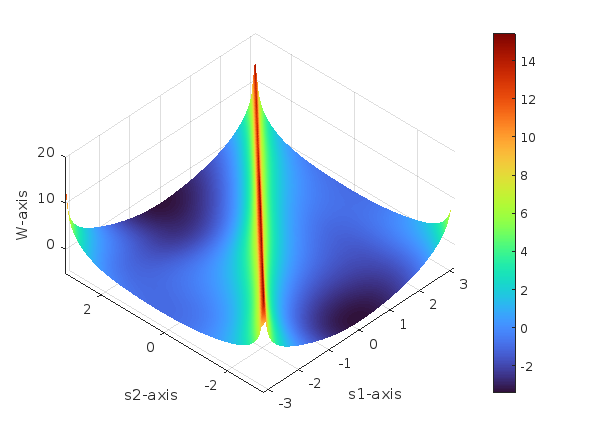}}
 
\caption{Numerical plot of (left) the magnetization vector field $m$ in grayscale with directional arrows and (right)  the renormalized energy versus the location of the boundary vortices (i.e. versus $s_1$ and $s_2$)  when applying an external field $\hex=(0,0)$ for (a) and (b) and $\hex=(-0.01,0)$} for (c) and (d). Notice that the renormalized energy is infinite on the diagonal $s_1=s_2$
\label{h=0.01}
\end{figure}
\subsection{In an oval domain}
Now consider the domain $\Omega\subset \R^2$ to have an oval shape with a 
smooth conformal diffeomorhpism $\Phi:\Bar{B}_1 \to\Bar{\Omega}$ given by 
\begin{equation*}
    \Phi(z)= \frac{z}{1-0.2 z^2} , \ \ z\in \Bar{B}_1,
\end{equation*}
with inverse $\Psi$, where $\Psi(w) = \frac{-1+\sqrt{1+0.8w^2}}{0.4w}$ for  $w \in\Bar{\Omega}$. Then, the map given by 
\begin{equation*}
M_*(w;\Phi(a)) = M(\Psi(w),a) \frac{\Phi'(\Psi(w))}{|\Phi'(\Psi(w))|}, \ \  \forall w \in \Omega,
\end{equation*}
is the canonical harmonic map associated to the boundary vortices $\{(\Phi(a_1),d_1), (\Phi(a_2),d_2)\}$ on $\partial\Omega$ where $a_1$ and $a_2$ are the boundary vortices on $\partial B_1$ mentioned before, and $M(\cdot;a)$ is the canonical harmonic map associated to  $\{(a_1,d_1), (a_2,d_2)\}$ given by equation \eqref{canonical.unitdisk} (see Theorem 4 in \cite{ignatk}). Let $b_1=\Phi(a_1)$ and $b_2=\Phi(a_2)$ be two boundary vortices on the boundary of the oval-shaped domain (on $\partial\Omega$) with $d_1=d_2=1$. Then, the unperturbed renormalized energy is (see Remark 8 in \cite{ignatk})
\begin{equation*}
  W_{\Omega;0}(a) = -\pi \log|a_1-a_2|  +\frac{1}{2} \int_{\partial B_1} \varkappa(\Phi(z)) |\Phi'(z)| \left(\log|z-a_1|+\log|z-a_2|+\log|\Phi'(z)| \right) \, d\mathcal{H}^1(z).
\end{equation*}
Our Matlab code looks for the vector $s=(s_1, s_2)$ where the renormalized energy of this oval-shaped domain is minimum. The outcome of the code is a numerical plot of the magnetization vector field 
\begin{equation*}
    m_*(w;\Phi(a))=e^{i\theta} M_*(w;\Phi(a)), \ \ \forall w \in \Omega,
\end{equation*}
where $\theta$ is the solution of the above PDE in the unit disk (see the previous section) when the renormalized energy in this oval-shaped domain is minimum. 
\begin{figure}

\centering

    \subfloat[]{\label{ellipse_m_1}
    \centering
    
 \includegraphics[width=0.45\linewidth]{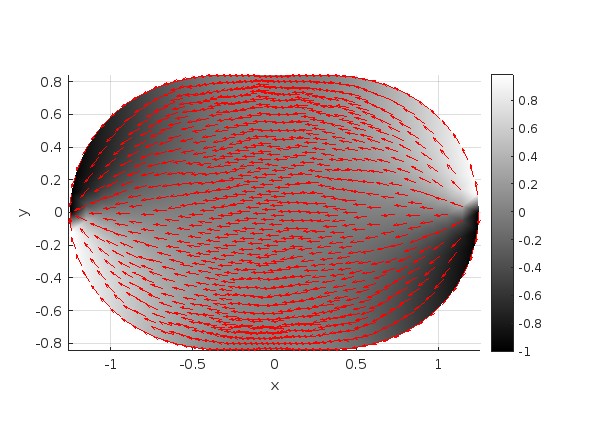}}
   \hfill
   \subfloat[]{\label{ellipse_m_2}
\centering
     \includegraphics[width=0.45\linewidth]{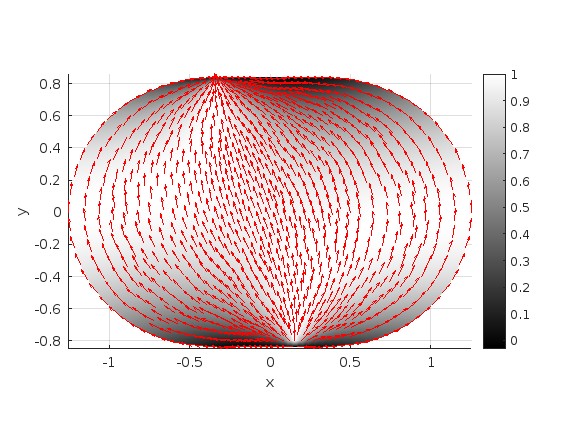}
     }    
     \hfill
       \subfloat[]{\label{ellipse_m_3}
       \centering
     \includegraphics[width=0.45\linewidth]{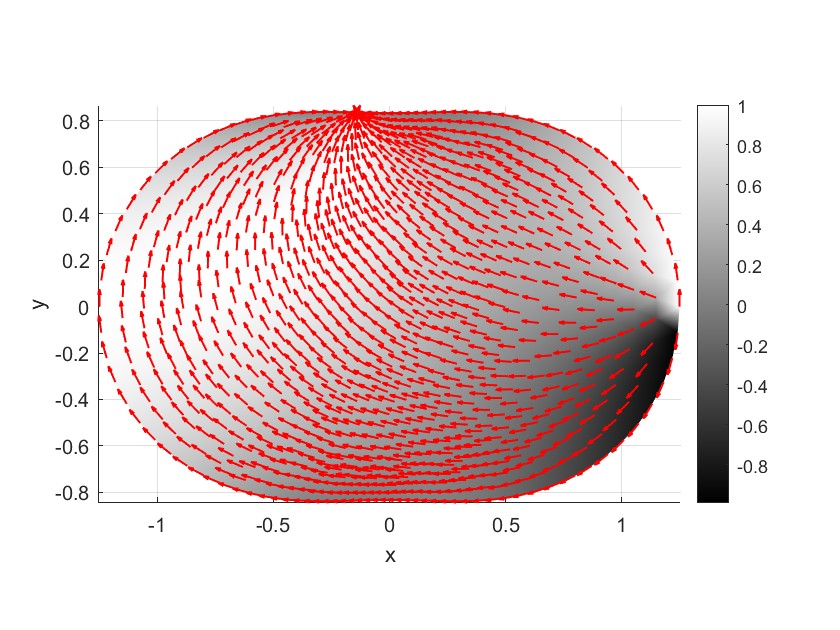}
     }
 \hfill
  \subfloat[]{\label{ellipse_m_4}
    \centering
     \includegraphics[width=0.45\linewidth]{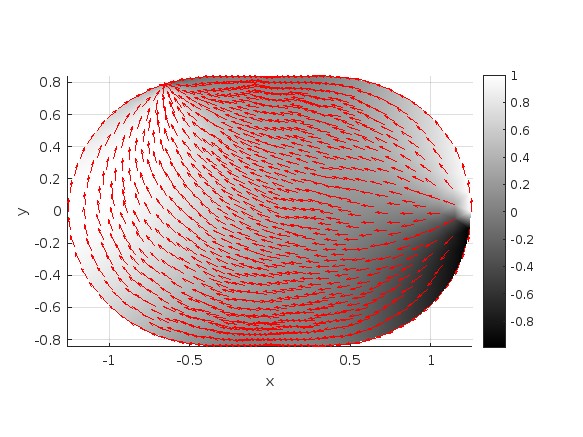}
     }
 
\caption{Numerical plot of the magnetization vector field $m$ in grayscale with directional arrows when applying an external field $\hex=(0,0)$ in (a), $\hex=(0,1)$ in (b),  $\hex=(0,0.01)$ in (c) and $\hex=(-0.01/\sqrt{2},0.01/\sqrt{2})$ in (d)}
\end{figure}

As discussed in \cite{ignatk}, for the renormalized energy to be minimal in the case of no external applied field, the boundary vortices have to be the furthest apart and at the same time the curvature $\varkappa$ at these vortices has to be the highest (there is a nontrivial competition between these two effects). Figure \ref{ellipse_m_1} shows the magnetization behavior when $\hex=0$, where the two boundary vortices appear opposite each other and furthest apart ($s_1=0$ and $s_2=\pm \pi$).

When an external field is applied, we expect the boundary vortices to try to align in the direction of the external field. However, the previously mentioned effects still affect the location of the vortices. For example, when $\hex=(0,1)$, the renormalized energy is minimal when the two vortices are in the locations shown in Figure \ref{ellipse_m_2}. We see that the magnetization vector field between these two vortices tries to align in the direction of $\hex$, but they are not totally aligned in the direction of $\hex$ due to the other effects mentioned before. Suppose we apply a weaker external field, for example $\hex=(0,0.01)$ as in Figure \ref{ellipse_m_3}. In that case, we notice that the magnetization vector field in the domain tries to align in the direction of the external field. Still, comparing with the previous case in Figure \ref{ellipse_m_2}, we see that strengthening the external field cause the magnetization to align more in the direction of the field. Suppose we fix the value of the external field and change the direction, we see that the magnetization field tries to align in the direction of $\hex$ as we notice from Figures \ref{ellipse_m_3} and \ref{ellipse_m_4} where $\hex=(0,0.01)$ and $\hex=(-0.01/\sqrt{2},0.01/\sqrt{2})$, respectively.

\printbibliography 
\end{document}